\documentclass[11pt,a4paper]{article}
\usepackage[margin=27mm]{geometry}
\usepackage[T1]{fontenc}
\usepackage[utf8]{inputenc}
\usepackage{lmodern}
\usepackage{amsmath,amssymb,amsthm,mathtools}
\usepackage{enumitem}
\usepackage{hyperref}
\usepackage{microtype}
\hypersetup{colorlinks=true,linkcolor=blue,citecolor=blue,urlcolor=blue}
\newtheorem{theorem}{Theorem}[section]
\newtheorem{lemma}[theorem]{Lemma}
\newtheorem{corollary}[theorem]{Corollary}
\newtheorem{proposition}[theorem]{Proposition}
\newtheorem{remark}[theorem]{Remark}
\newcommand{\Om}{\Omega}
\newcommand{\QT}{Q_T}
\newcommand{\HH}{H}
\newcommand{\VV}{V}
\newcommand{\dual}[2]{\langle #1,#2\rangle}
\newcommand{\Rn}{\mathbb R^m}
\newcommand{\dd}{\,\mathrm d}
\title{Energy Equality, Vectorial $L^1$ Contraction, and\\ Mixed-Norm Stability for Parabolic Systems with Rough Skew-Symmetric Potentials}
\author{Francisco Javier Fern\'andez Fern\'andez}
\date{17 October 2026}
\begin{document}\maketitle
\begin{abstract}
We consider Dirichlet parabolic systems with measurable, real
skew-symmetric zero-order potentials on bounded Lipschitz domains in
three dimensions. Although skew-symmetry formally removes the potential
from the energy balance, singular coefficients may leave the product
$CU$ merely integrable, so testing the equation with the solution is
not justified. We overcome this obstruction through bounded-gradient
entropy identities and stable approximation. For
$C\in L^r(0,T;L^q(\Omega;\mathbb R^{m\times m}))$ with
$1\le r\le\infty$, $6/5\le q\le\infty$, and
$2/r+3/q\le7/2$, we establish existence and uniqueness in the full
energy--distributional class, exact energy equality at every pair of
times, Euclidean-vector $L^1$ contraction, and strong convergence of
both solutions and coefficient--solution products under strong
coefficient approximation. We compare a vectorial entropy proof with
a derivation by scalar entropies and spherical averaging, and explain
the narrower roles of temporal regularization, convex invariance, and
adjoint transposition. An application to diffusion--precession gives
quantitative stability under $L^1$ perturbations of prescribed magnetic
fields for bounded initial data, including singular fields outside the
main mixed-norm range. 
\end{abstract}

\section{Introduction}\label{sec:intro}

The flow generated by a skew-symmetric matrix preserves Euclidean
lengths. When such a matrix acts as a zero-order coefficient
in a diffusion equation, one therefore expects the diffusion to
dissipate energy while the coupling contributes nothing to the energy
balance. For a smooth solution $U$, the reason is the elementary
identity $U\cdot CU=0$ whenever $C^\top=-C$. But this formal argument
leaves a genuine question unanswered: does the same energy equality
hold for an arbitrary weak solution when the matrix field is
singular?

The difficulty lies not in the cancellation, but in making it a legal
step of the weak formulation. We consider
$U_t-\nu\Delta U+CU=0$ with homogeneous Dirichlet boundary conditions
on a bounded Lipschitz domain $\Omega\subset\mathbb R^3$. In the base
case $C\in L^2(0,T;L^{6/5}(\Omega;\mathbb R^{m\times m}))$, the
energy-space bounds give $U\in L^2(0,T;L^6)$ and hence
$CU\in L^1((0,T)\times\Omega)$. This product can be paired with
bounded smooth tests, but it need not belong to
$L^2(0,T;(H_0^1(\Omega;\mathbb R^m))')$. Thus the distributional
equation does not, by itself, permit the choice of $U$ as a test
function. An assertion of energy equality based only on the pointwise
identity $U\cdot CU=0$ would miss precisely the analytic issue we must
resolve.

Our starting point is to replace the forbidden energy test by
{bounded-gradient entropies}. Their derivatives remain legitimate
tests against an $L^1$ source, while their radial structure retains
the skew cancellation. This requires a chain rule for time derivatives
that split between a Sobolev dual space and $L^1$ in space--time. It
also requires genuine time traces: the chain rule is conditional on
$C([0,T];L^1)$ continuity, which we obtain independently by comparing
solutions of bounded-coefficient problems. The proof therefore follows
a deliberate order: bounded approximation produces a continuous
limit, the chain rule recovers exact energy, and the resulting
identities yield universal uniqueness and stronger convergence.

We first prove the theorem for $C\in L^2(0,T;L^{6/5}(\Omega;\mathbb R^{m\times m}))$.
This case already contains the central difficulty: $CU$ is integrable,
but it need not belong to the Sobolev dual needed for the usual energy
test. Treating it first allows us to establish the chain rule,
exact-time continuity, and uniqueness without simultaneously managing
the different temporal and spatial endpoints of the mixed range.
We then keep these mechanisms and identify the additional multiplier
and approximation estimates required for the complete range.
The full result, Theorem~\ref{thm:mixed}, covers every
real skew-symmetric
$C\in L^r(0,T;L^q(\Omega;\mathbb R^{m\times m}))$ satisfying
\begin{displaymath}
 1\le r\le\infty,\qquad \frac65\le q\le\infty,
 \qquad \frac2r+\frac3q\le\frac72.
\end{displaymath}
For each $L^2$ initial datum it gives existence and uniqueness among
{all} solutions in the specified energy--distributional class,
not only among limits produced by our approximation. It proves exact
energy equality between any two times and contraction of the spatial
$L^1$ norm of the {Euclidean vector length}. Under strong
convergence of skew coefficients in the same mixed norm, solutions
converge strongly both in $C([0,T];L^2(\Omega;\mathbb R^m))$
and in $L^2(0,T;H_0^1(\Omega;\mathbb R^m))$, and the products $C_nU_n$ converge to $CU$
strongly in $L^1(Q_T;\mathbb R^m)$. That last assertion matters because convergence
of the solutions alone does not automatically identify the product
with a singular, varying coefficient.

There are several useful ways to see parts of this argument, and we
include them to clarify rather than multiply proofs. The main route
uses a vectorial conditional chain rule and radial entropies. A second
derivation of the complete mixed theorem starts from scalar entropy
identities and reconstructs the radial vector quantities by averaging
over directions on a sphere; the two derivations share independently
established auxiliary results. Temporal convolution offers a
different regularization of the chain rule. Convex invariance explains
heat domination in the bounded-form regime, while adjoint
transposition provides a separate route to contraction and uniqueness.
These methods do not all prove the same assertions, and their
respective hypotheses and roles are stated where they are used.

The ingredients of the argument have important precedents, but
their conclusions and hypotheses must be distinguished. The scalar
sum-space chain rules of \cite{AlvarezVazquezJDE2008,AlvarezVazquez2009}
and the renormalization methods of
\cite{BlanchardMurat1997,BlanchardMuratRedwane2001} explain how one
can pair a bounded entropy derivative with merely integrable source
terms. Related vectorial entropy identities, formulated under
other time-derivative hypotheses, are presented in \cite{Jungel2025}.
Our argument makes the vectorial skew cancellation and the required
exact-time traces available in the solution class used here.

Classical parabolic systems theory is also a genuine antecedent, not
merely a source of techniques. Chapter~VII, \S1 of
\cite{LadyzhenskayaSolonnikovUraltseva1968} gives a system
existence--uniqueness theorem in an overlapping coefficient region,
including the spatially bounded slice. Lions' nonautonomous form
theory \cite{Lions1961} covers further uniformly form-bounded cases;
vector-modulus domination and convex invariance have earlier
semigroup and form formulations
\cite{Ouhabaz1999,Shigekawa1997,ArendtDierOuhabaz2014}.
Remark~\ref{rem:mixed-lsu} checks the precise overlap, including
solution classes and domain qualifications. Thus the claimed
contribution is not the invention of weak parabolic systems or
vector contraction. It is the joint result in the stated mixed
coefficient region: uniqueness among all energy--distributional
competitors, exact energy at all times, vector contraction, and
strong convergence of both solutions and coefficient--solution
products. The particularly singular endpoints fall outside the
sufficient hypotheses of several cited classical theories. Finally,
Remark~\ref{rem:mixed-mult-sharpness} concerns sharpness of a
specific energy-space multiplication bound, not an optimal
solvability threshold for the PDE. 

An explicit singular potential in Section~\ref{sec:example} makes the
comparison with spatial Kato and heat-kernel conditions concrete,
including the nonautonomous theories in
\cite{Gulisashvili2002,Gulisashvili2005,LVV2006,RRSV2000}.
The comparison concerns the hypotheses of particular published
results, not every conceivable existence theory. We then turn to a
simple but informative application: a prescribed magnetic field
induces a skew precession coupling in a diffusion equation. For
bounded initial magnetization, the same entropy mechanism yields
quantitative stability under $L^1$ perturbations of the field,
even for an explicit solenoidal singular field outside the main mixed
region. The magnetic-field coupling is the cross-product precession term
already present in the Bloch--Torrey diffusion model
\cite{Torrey1956}; it has the algebraic skew symmetry required by
our analysis. We deliberately work with prescribed fields,
isotropic diffusion, homogeneous Dirichlet data, and at most a
spatially uniform scalar relaxation term. More general spin-transport
models include additional transport, coupling and boundary
conditions \cite{GarciaCerveraWang2007,GarciaCerveraWang2015,PuWang2020};
we do not claim to treat those systems in full. 

The paper is organized as follows. After specifying the
solution class in Section~\ref{sec:framework}, we establish the
conditional chain rule in Section~\ref{sec:chain} and the bounded
comparison estimate in Section~\ref{sec:bounded}. Sections
\ref{sec:existence}--\ref{sec:strong} construct the singular solution,
prove contraction for arbitrary competitors and upgrade the
approximation to strong energy convergence. Section~\ref{sec:mixed}
then treats the full mixed range and the complementary arguments.
The singular example and diffusion--precession application complete
the discussion in Sections~\ref{sec:example} and
\ref{sec:application-precession}.

\section{Functional framework, Bochner spaces, and the main result}\label{sec:framework}
Let $\Omega\subset\mathbb R^3$ be a bounded Lipschitz open set,
$0<T<\infty$, $\nu>0$, and $m\in\mathbb N$. We set
\begin{displaymath}
Q_T=(0,T)\times\Omega,\qquad
H=L^2(\Omega;\mathbb R^m),\qquad
V=H_0^1(\Omega;\mathbb R^m),\qquad V'=(H_0^1(\Omega;\mathbb R^m))'.
\end{displaymath}
For $H$ we take the standard real Hilbert-space inner product, and for $V$
the gradient norm $\|v\|_V=\|\nabla v\|_{L^2(\Omega;\mathbb R^{m\times3})}$,
which is equivalent to the full Sobolev norm by Poincar\'e's inequality.
The inclusion $V\hookrightarrow H$ is continuous and dense; identifying
$H$ with its dual gives the Gelfand triple $V\hookrightarrow H
\hookrightarrow V'$. The duality pairing $\langle f,v\rangle_{V',V}$
coincides with the $H$ inner product when $f\in H$.
Throughout the proofs, $\|v(t)\|_H$ means the spatial $L^2$ norm
at a fixed physical time; $\|v\|_{L^2(0,T;V)}$ is a Bochner norm;
and $\|\nabla v\|_{L^2((0,T)\times\Omega;\mathbb R^{m\times3})}$
integrates in both time and space. For scalar or matrix functions,
$\|\cdot\|_\infty$ denotes the essential or uniform supremum over the
explicitly indicated natural domain. We keep $H,V,V'$ in long proofs
only after identifying their spatial domains and value spaces above.

\subsection{Function spaces and temporal regularity}
For a Banach space $X$, $L^p(0,T;X)$ denotes the Bochner space of
strongly measurable functions $t\mapsto f(t)\in X$ with finite norm
$\bigl(\int_0^T\|f(t)\|_X^p\,\mathrm dt\bigr)^{1/p}$ for $1\le p<\infty$.
The space $C([0,T];X)$ consists of functions that are continuous with
respect to the norm of $X$ at every time, including the one-sided limits at
$0$ and $T$. We use $L^\infty(0,T;X)$ only for essential boundedness in
time; its elements do not automatically possess time traces.

In particular, saying
\begin{displaymath}
C\in L^2\bigl(0,T;L^{6/5}(\Omega;\mathbb R^{m\times m})\bigr)
\end{displaymath}
means that for almost every $t$ the matrix field $C(t,\cdot)$ belongs to
$L^{6/5}(\Omega;\mathbb R^{m\times m})$, that the map into this Lebesgue
space is strongly measurable, and that
\begin{equation}\label{eq:coefficient-norm}
\|C\|_{L^2(0,T;L^{6/5}(\Omega;\mathbb R^{m\times m}))}
=\left[\int_0^T\left(\int_\Omega |C(t,x)|_{\mathrm F}^{6/5}
\,\mathrm dx\right)^{5/3}\,\mathrm dt\right]^{1/2}<\infty.
\end{equation}
For a matrix $M=(M_{ij})_{1\le i,j\le m}$ we use its Frobenius norm
$|M|_{\mathrm F}=(\sum_{i,j=1}^m|M_{ij}|^2)^{1/2}$. The induced
Euclidean operator norm is $|M|_{\mathrm{op}}=\sup_{|\xi|=1}|M\xi|$;
in finite dimensions $|M|_{\mathrm{op}}\le |M|_{\mathrm F}\le
\sqrt m\,|M|_{\mathrm{op}}$. These pointwise norms are distinct from
the operator norm of a linear map between function spaces.
A vector $z\in\mathbb R^m$ has Euclidean norm
$|z|=(\sum_i|z_i|^2)^{1/2}$, and $Mz$ is the usual matrix--vector
product. Cauchy--Schwarz yields $|Mz|\le |M|_{\mathrm F}|z|$.
For $u:\Omega\to\mathbb R^m$, its Jacobian is the $m\times3$ matrix
$\nabla u=(\partial_j u_i)_{i,j}$. For matrices of the same shape,
$P:Q=\sum_{i,j}P_{ij}Q_{ij}$ is the Frobenius inner product of $P$ and $Q$.
For vectors $\xi,\eta\in\mathbb R^m$, their outer product
$\xi\otimes\eta\in\mathbb R^{m\times m}$ has entries
$(\xi\otimes\eta)_{ij}=\xi_i\eta_j$. We use this outer-product notation
for Hessians of radial entropies. In particular, the matrix coefficient
$C$ and the spatial Jacobian $\nabla U$ retain distinct meanings. Moreover, $\|v\|_{L^1(\Omega;\mathbb R^m)}=
\int_\Omega|v(x)|\,\mathrm dx$.

The regularity required of a solution is the intersection
\begin{displaymath}
C([0,T];L^2(\Omega;\mathbb R^m))\cap
L^2(0,T;H_0^1(\Omega;\mathbb R^m)),
\end{displaymath}
which expresses {two different properties}: the map
$t\mapsto U(t)$ is continuous from $[0,T]$ into
$L^2(\Omega;\mathbb R^m)$ in its norm, and $U$ belongs to the Bochner
space $L^2(0,T;H_0^1(\Omega;\mathbb R^m))$.
Homogeneous Dirichlet boundary conditions are imposed through
$H_0^1(\Omega;\mathbb R^m)$, not through a pointwise boundary value.
In contrast, the product $CU$ lives in
$L^1((0,T)\times\Omega;\mathbb R^m)$, the ordinary Lebesgue space on the
space--time cylinder.

The Dirichlet Laplacian is defined by the form identity
\begin{displaymath}
\langle\Delta_D v,\varphi\rangle_{V',V}
=-\int_\Omega\nabla v:\nabla\varphi\,\mathrm dx,
\qquad v,\varphi\in V.
\end{displaymath}
Thus $\Delta_D v\in V'$ for $v\in V$, and
$\Delta_D U\in L^2(0,T;V')$ whenever
$U\in L^2(0,T;V)$. This statement does not assert that $\Delta_D U$ is
an element of $L^2((0,T)\times\Omega;\mathbb R^m)$.

We occasionally abbreviate $C([0,T];X)$ by $C_tX$,
$L^r(0,T;X)$ by $L^r_tX$, and
$L^r(0,T;L^q(\Omega;E))$ by $L^r_tL^q_x$,
where the spatial value space $E=\mathbb R^m$ for solutions and
$E=\mathbb R^{m\times m}$ for coefficients is determined by the
specified argument. The symbols $L^p(Q_T;E)$ and $L^p(\Omega;E)$
refer to different domains; the domain is written out whenever
both are possible. For gradients of $\mathbb R^m$-valued functions,
the range is $\mathbb R^{m\times3}$.

\subsection{The distributional solution class}
We call $U$ an {energy--distributional solution} if it belongs to
$C([0,T];H)\cap L^2(0,T;V)$, its product $CU$ belongs to
$L^1((0,T)\times\Omega;\mathbb R^m)$, and the following identity holds for every $\varphi\in C_c^\infty((0,T)\times\Omega;\mathbb R^m)$:
\begin{equation}\label{eq:weak-definition}
-\int_0^T\!\int_\Omega U\cdot\partial_t\varphi\,\mathrm dx\,\mathrm dt
+\nu\int_0^T\!\int_\Omega\nabla U:\nabla\varphi\,\mathrm dx\,\mathrm dt
+\int_0^T\!\int_\Omega(CU)\cdot\varphi\,\mathrm dx\,\mathrm dt=0.
\end{equation}
The terms of \eqref{eq:weak-definition} are well defined: $U\in L^2(Q_T;\mathbb R^m)$,
$\nabla U\in L^2(Q_T;\mathbb R^{m\times3})$, and
$CU\in L^1(Q_T;\mathbb R^m)$, while $\varphi$ and its derivatives are bounded.
Equivalently, the differential equation holds in
$\mathcal D'(Q_T;\mathbb R^m)$, meaning that it holds when paired with
all such smooth compactly supported tests. We do not assert equality in
$L^2(0,T;V')$: that would require additional dual-space integrability
of $CU$ which the assumptions do not provide.

Neither the initial condition nor the homogeneous Dirichlet condition is
encoded by these interior tests alone. We impose $U(0)=U_0$ using the
representative in $C([0,T];H)$, while $U\in L^2(0,T;V)$ imposes the zero
Sobolev trace on the spatial boundary at almost every time.
The uniqueness claim applies to {any two} functions satisfying this
definition, even if neither was obtained by the approximation method used
later to prove existence.

\begin{theorem}[Well-posedness, energy equality, and contraction]\label{thm:main}
Assume $U_0\in L^2(\Om;\Rn)$ and
\begin{equation}\label{eq:C}
C\in L^2(0,T;L^{6/5}(\Om;\mathbb R^{m\times m})),\qquad C(t,x)^\top=-C(t,x)\quad\text{a.e.}
\end{equation}
Then there exists exactly one $U$ satisfying
\begin{equation}\label{eq:energy-class}
U\in C([0,T];L^2(\Om;\Rn))\cap L^2(0,T;H_0^1(\Om;\Rn)),\qquad CU\in L^1((0,T)\times\Om;\Rn),
\end{equation}
$U(0)=U_0$, and
\begin{equation}\label{eq:weak}
-\int_0^T\!\int_\Om U\cdot\partial_t\varphi
+\nu\int_0^T\!\int_\Om\nabla U:\nabla\varphi
+\int_0^T\!\int_\Om(CU)\cdot\varphi=0
\end{equation}
for every $\varphi\in C_c^\infty((0,T)\times\Om;\Rn)$. The initial datum and the homogeneous boundary condition are meant respectively in $C([0,T];\HH)$ and in $L^2(0,T;\VV)$. For every $0\le s\le t\le T$,
\begin{equation}\label{eq:energy}
\frac12\|U(t)\|_{H}^2+\nu\int_s^t\|\nabla U(\tau)\|_{L^2(\Omega;\mathbb R^{m\times3})}^2\dd\tau
=\frac12\|U(s)\|_{H}^2.
\end{equation}
If $U$ and $V$ are any two energy--distributional solutions satisfying \eqref{eq:energy-class}--\eqref{eq:weak} for the same $C$ (possibly with different initial data), then
\begin{equation}\label{eq:contract}
\|U(t)-V(t)\|_{L^1(\Om;\Rn)}\le\|U(s)-V(s)\|_{L^1(\Om;\Rn)}
\qquad (0\le s\le t\le T).
\end{equation}
Furthermore, let each $C_n\in L^\infty(Q_T;\mathbb R^{m\times m})$
be measurable and skew-symmetric, and assume that $C_n\to C$ in
$L^2(0,T;L^{6/5}(\Omega;\mathbb R^{m\times m}))$.
Let $U_n$ solve
$\partial_tU_n-\nu\Delta U_n+C_nU_n=0$ in $Q_T$, with homogeneous
Dirichlet boundary condition and the {same} initial value $U_0$.
Then
\begin{equation}\label{eq:strongconv}
\begin{aligned}
U_n&\longrightarrow U &&\text{in }C([0,T];L^2(\Om;\Rn))\cap L^2(0,T;H_0^1(\Om;\Rn)),\\
C_nU_n&\longrightarrow CU &&\text{in }L^1((0,T)\times\Om;\Rn).
\end{aligned}
\end{equation}
\end{theorem}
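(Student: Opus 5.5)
The proof follows the order announced in the introduction: bounded approximation, a continuous limit, the conditional chain rule, and finally uniqueness and strong convergence.

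\emph{Approximation and a priori bounds.} Truncate the coefficient radially: set $C_n=C\,\min\{1,n/|C|_{\mathrm F}\}$. This $C_n$ is bounded and skew-symmetric, and $C_n\to C$ in $L^2(0,T;L^{6/5})$ by dominated convergence. For bounded $C_n$, Lions' theorem gives a unique $U_n\in C([0,T];H)\cap L^2(0,T;V)$ with $\partial_tU_n\in L^2(0,T;V')$. Testing with $U_n$ is then legal, and $U_n\cdot C_nU_n=0$, so the energy equality \eqref{eq:energy} holds uniformly in $n$. Sobolev embedding $V\hookrightarrow L^6$ and H\"older's inequality give
\begin{displaymath}
\|C_nU_n\|_{L^1(Q_T)}\le \|C_n\|_{L^2_tL^{6/5}_x}\|U_n\|_{L^2_tL^6_x}.
\end{displaymath}
Next I prove the bounded comparison estimate. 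For two bounded skew coefficients $C_n,C_k$, test the difference equation with $\nabla\Phi_\varepsilon(U_n-U_k)$, where $\Phi_\varepsilon(z)=\sqrt{\varepsilon^2+|z|^2}$ is a radial entropy. The Hessian term is nonnegative. The skew part cancels because $\nabla\Phi_\varepsilon(w)$ is parallel to $w$. The remaining forcing $(C_n-C_k)U_k$ is paired with a test bounded by $1$. Letting $\varepsilon\to0$ gives
\begin{displaymath}
\sup_t\|U_n(t)-U_k(t)\|_{L^1}\le \|C_n-C_k\|_{L^2_tL^{6/5}_x}\,\sup_k\|U_k\|_{L^2_tL^6_x}.
\end{displaymath}
Hence $(U_n)$ is Cauchy in $C([0,T];L^1)$, and combined with weak compactness in $L^2(0,T;V)$ we obtain a limit $U$. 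This limit lies in $C([0,T];L^1)\cap L^\infty(0,T;H)\cap L^2(0,T;V)$ and satisfies $U(0)=U_0$. A subsequence converges a.e., and Vitali's theorem (equi-integrability comes from H\"older's inequality with the convergent $C_n$) gives $C_nU_n\to CU$ in $L^1$. So $U$ satisfies \eqref{eq:weak}.

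\emph{Conditional chain rule and energy equality.} This is the main obstacle. We know $\partial_tU=\nu\Delta_DU-CU\in L^2(0,T;V')+L^1(Q_T)$ and $U\in C([0,T];L^1)\cap L^2(0,T;V)\cap L^\infty(0,T;H)$. From this I want, for radial convex entropies $\Psi$ with bounded gradient and $\Psi(0)=0$,
\begin{displaymath}
\int_\Omega\Psi(U(t))-\int_\Omega\Psi(U(s))=\int_s^t\langle\partial_tU,\nabla\Psi(U)\rangle.
\end{displaymath}
My first attempt is a Steklov average in time. It is compatible with both parts of the sum space, and $\nabla\Psi(U)\in L^2(0,T;V)\cap L^\infty$, so the pairing is defined. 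The convex inequality $\Psi(b)-\Psi(a)\ge\nabla\Psi(a)\cdot(b-a)$, applied once forward and once backward, squeezes the difference quotients. The $C([0,T];L^1)$ continuity together with the Lipschitz bound on $\Psi$ identifies the endpoint terms at every time.

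For the energy I take $\Psi_M(z)=\tfrac12|z|^2$ for $|z|\le M$, continued linearly in $|z|$ beyond $M$. Then $\nabla\Psi_M(U)=\min\{1,M/|U|\}\,U$ is parallel to $U$, so $(CU)\cdot\nabla\Psi_M(U)=0$ pointwise. The diffusion term is $\nu\int_{\{|U|<M\}}|\nabla U|^2$ plus a nonnegative term on $\{|U|>M\}$ that tends to zero as $M\to\infty$. Monotone convergence in $M$ then gives \eqref{eq:energy} for every $0\le s\le t$. Applying the same bookkeeping to the energy equality upgrades weak $L^\infty(0,T;H)$ continuity to $U\in C([0,T];H)$, so $U$ lies in the class \eqref{eq:energy-class}.

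\emph{Contraction, uniqueness, strong convergence.} For two arbitrary competitors $U,V$ in the class, the difference $W=U-V$ satisfies the same linear equation and lies in the same class, since $CW=CU-CV\in L^1$. Applying the chain rule with $\Phi_\varepsilon(z)-\varepsilon$ and letting $\varepsilon\to0$ gives \eqref{eq:contract}; note that no $L^6$ bound on the competitors is needed here. Uniqueness follows by taking equal initial data. For strong convergence along an arbitrary bounded skew sequence $C_n\to C$, the comparison estimate gives $U_n\to U$ in $C([0,T];L^1)$, and uniqueness upgrades subsequence limits to the full sequence. Energy equality for both $U_n$ and $U$ from the same $U_0$ gives
\begin{displaymath}
\tfrac12\|U_n(t)\|_H^2+\nu\|\nabla U_n\|_{L^2(Q_t)}^2=\tfrac12\|U(t)\|_H^2+\nu\|\nabla U\|_{L^2(Q_t)}^2 .
\end{displaymath}
Weak lower semicontinuity of each term then yields convergence of norms, hence strong convergence in $L^2(0,T;V)$ and pointwise in time in $H$. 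Uniformity in $t$ follows from equicontinuity via the energy identity on subintervals, or from a Dini-type argument. The product convergence was obtained by Vitali above.
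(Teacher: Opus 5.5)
Your proposal is correct and follows the paper's architecture step by step. The shared steps are:
\begin{itemize}
\item bounded skew truncations;
\item the $\Phi_\varepsilon$ comparison estimate of Lemma~\ref{lem:stability}, giving a limit in $C([0,T];L^1(\Omega;\mathbb R^m))$ \emph{before} any chain rule is applied to the singular equation;
\item $L^1$ identification of the product;
\item bounded-gradient radial entropies for the exact energy;
\item $\Phi_\varepsilon$ applied to arbitrary differences for contraction;
\item equality of energies to upgrade weak convergence to strong convergence.
\end{itemize}

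The one genuine difference is how you prove the sum-space chain rule. The paper's Lemma~\ref{lem:chain} regularizes in space with the Dirichlet heat semigroup. That semigroup acts on the $V'$ and $L^1$ parts of $\partial_tw$ in their natural spaces, and it yields the identity for every $C^2$ entropy with bounded gradient and Hessian, convex or not. Your Steklov average with the two supporting-hyperplane inequalities is the convex route the paper only sketches in its remark on regularizations. It needs convexity, but only $C^{1,1}$ regularity. That is exactly what allows your Huber entropy $\Psi_M$, which is not $C^2$ and so lies outside the hypotheses of Lemma~\ref{lem:chain}. In return you get an exactly quadratic core and the explicit test $\min\{1,M/|U|\}U$. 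Your direct Vitali argument on $C_nU_n$ is an equivalent variant of the paper's split $(C_n-C)U_n+C(U_n-U)$.

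Two points should be made explicit rather than asserted.
\begin{enumerate}
\item Knowing that the pairing with $\nabla\Psi(U(\tau))\in V\cap L^\infty$ is defined does not by itself let you test the equation with it. You must extend the time-integrated identity from $C_c^\infty$ tests to $V\cap L^\infty$ tests. Use a uniformly bounded, a.e.\ convergent approximation, as in Step~2 of the proof of Lemma~\ref{lem:chain}, so that the $L^1$ term passes by dominated convergence.
\item The Dini argument, or equicontinuity of $t\mapsto\int_0^t\|\nabla U_n\|_2^2$, gives only uniform convergence of $\|U_n(t)\|_H^2$. Uniform convergence in $C([0,T];H)$ also needs $\sup_t|(U_n(t)-U(t),U(t))_H|\to0$. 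The paper obtains this from compactness of $U([0,T])$ in $H$ together with uniform $L^1$ convergence. Its mixed-norm proof does the same job with a contradiction argument using $U_n(t_n)\rightharpoonup U(t_*)$ along moving times.
\end{enumerate}
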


\paragraph{How the proof is organized.}
The first step is to solve problems with bounded skew coefficients.
Their standard energy identities provide uniform bounds, while a
quantitative vector $L^1(\Omega;\mathbb R^m)$  comparison makes the approximate solutions
Cauchy as continuous trajectories. This is essential: weak compactness
in the energy space alone would not give the time continuity needed
by our nonlinear chain rule.

After identifying the singular product in $L^1(Q_T)$, we may apply
the conditional chain rule to the {limit equation}. Radial
entropies justify the cancellation and recover an exact energy
identity, rather than merely passing an energy inequality to the
limit. The identity in turn promotes the limit to strong $L^2(\Omega;\mathbb R^m)$  
continuity. Applying the same argument to the difference of
{arbitrary} solutions in the declared class proves contraction
and uniqueness. Finally, equality of energy norms turns weak
gradient convergence into strong convergence. Each stage supplies a
hypothesis needed by the next, so no time trace or nonlinear test is
used before it becomes available.

\begin{remark}[The exact integrability supplied by the hypotheses]\label{rem:integrability}
For $v\in H_0^1(\Omega;\mathbb R^m)$, the three-dimensional Sobolev
inequality (see \cite[Theorem~4.31, p.~102]{AdamsFournier2003}) gives
$\|v\|_{L^6(\Omega;\mathbb R^m)}\le S_\Omega\|\nabla v\|_{L^2(\Omega;\mathbb R^{m\times3})}$.
The vector-valued form follows by zero extension, density, and the
finite-component norm equivalence. At almost every time, H\"older's inequality with conjugate exponents
$6/5$ and $6$ implies
$\|C(t)U(t)\|_{L^1(\Omega;\mathbb R^m)}\le
\|C(t)\|_{L^{6/5}(\Omega;\mathbb R^{m\times m})}
\|U(t)\|_{L^6(\Omega;\mathbb R^m)}$.
A second application of H\"older with exponents $2$ and $2$ in time yields
\begin{equation}\label{eq:holder}
\begin{split}
\|CU\|_{L^1((0,T)\times\Omega;\mathbb R^m)}
&\le
\|C\|_{L^2(0,T;L^{6/5}(\Omega;\mathbb R^{m\times m}))}
\|U\|_{L^2(0,T;L^6(\Omega;\mathbb R^m))}\\
&\le S_\Omega
\|C\|_{L^2(0,T;L^{6/5}(\Omega;\mathbb R^{m\times m}))}
\|\nabla U\|_{L^2((0,T)\times\Omega;\mathbb R^{m\times3})}.
\end{split}
\end{equation}
This estimate proves $CU\in L^1(Q_T;\mathbb R^m)$. In general it does
{not} place $CU$ in $L^2(0,T;V')$. The latter conclusion must not be
used to test the equation directly against $U$. Instead we will apply a
chain rule to bounded gradients of smooth convex radial functions of the vector variable.
\end{remark}

\begin{remark}[Beyond the base coefficient class]
The preceding estimate extends to coefficients in the mixed region
$1\le r\le\infty$, $6/5\le q\le\infty$ and
$2/r+3/q\le7/2$, but integrability of $CU$ is only the
starting point. The present base case isolates the nonlinear
chain rule, the temporal-trace issue and the uniqueness argument.
Section~\ref{sec:mixed} then supplies the energy-space multiplier
for the full range and verifies the approximation and stability
steps at its endpoints. In particular, $r=1$ uses temporal
integrability rather than uniform-in-time form bounds, whereas
$r=\infty$ cannot be handled by assuming that bounded
truncations converge strongly in an $L^\infty$ coefficient norm.
The separate presentation makes these additional arguments visible
without obscuring the mechanism already proved here.
\end{remark}

\section{A conditional chain rule for sum-space time derivatives}\label{sec:chain}

The ordinary Hilbert-space chain rule is well suited to a time
derivative taking values in $L^2(0,T;V')$. Our weak equation has an
additional source which is only integrable. We must therefore use
tests that are regular enough for the dual pairing and bounded enough
for the $L^1(Q_T;\mathbb R^m)$  pairing. This is the purpose of the following chain
rule.

More precisely, its time derivative has the form
\begin{displaymath}
\partial_t w=A+B,\qquad
A\in L^2(0,T;V'),\quad B\in L^1(Q_T;\mathbb R^m).
\end{displaymath}
We can pair $A$ with $V$ functions and integrate $B$ against bounded
functions, but neither observation allows us simply to test with
$w$. The entropy derivative will supply one admissible test for both
terms.

Instead, consider a smooth scalar function $\Phi:\mathbb R^m\to\mathbb R$.
Here $D\Phi(z)=(\partial_{z_i}\Phi(z))_{i=1}^m$ is its gradient in the
{vector variable} $z$, and
$D^2\Phi(z)=(\partial_{z_i}\partial_{z_j}\Phi(z))_{i,j=1}^m$
is the Hessian. If $D\Phi$ is bounded, then
$B\cdot D\Phi(w)\in L^1(Q_T;\mathbb R)$.
If also $D\Phi(0)=0$ and $D^2\Phi$ is bounded, the Sobolev composition
rule gives, almost everywhere,
\begin{displaymath}
\begin{aligned}
	|D\Phi(w)|\le & \|D^2\Phi\|_{L^\infty(\mathbb R^m;\mathcal L(\mathbb R^m,\mathbb R^m))} |w|,\\
	\nabla_x[D\Phi(w)]=&D^2\Phi(w)\nabla_xw,
\end{aligned}
\end{displaymath}
and therefore
$|\nabla_x[D\Phi(w)]|\le\|D^2\Phi\|_{L^\infty(\mathbb R^m;\mathcal L(\mathbb R^m,\mathbb R^m))}|\nabla_xw|$.
The zero value $D\Phi(0)=0$ preserves the homogeneous Sobolev boundary
trace. Thus $D\Phi(w)\in L^2(0,T;V)$ whenever $w\in L^2(0,T;V)$;
the vector-valued chain rule is obtained componentwise from
\cite[Exercise~11.52, p.~340]{Leoni2017}; preservation of the zero
trace follows by approximation, consistently with
\cite[Theorem~18.7, p.~595]{Leoni2017}.
Both terms in the chain-rule formula are now integrable in physical time:
the $A$ pairing by Cauchy--Schwarz in $L^2(0,T)$ and the $B$ product
by boundedness of $D\Phi$.

To evaluate the integrated identity at specified times, we additionally
assume that $w$ is continuous from $[0,T]$ into
$L^1(\Omega;\mathbb R^m)$. In our parabolic application this
continuity will be proved first by a stability estimate; it is not a
consequence of the displayed decomposition alone.

The proof regularizes in space by the Dirichlet heat operator $S(h)$,
where $h>0$ is an auxiliary parameter unrelated to physical time $t$.
For each fixed $h$, $S(h)$ smooths the dual-space term $A(t)\in V'$
and the integrable term $B(t)\in L^1(\Omega;\mathbb R^m)$ separately.
The two actions are compatible on common inputs, as justified below
using bounded Sobolev tests and transposition; no action on arbitrary
distributions is presumed.

\paragraph{The Dirichlet heat semigroup.}
We first define the regularization operator on
$H=L^2(\Omega;\mathbb R^m)$.
Given $f\in H$, consider the homogeneous Dirichlet heat problem
\begin{displaymath}
\begin{cases}
\partial_h y(h,x)-\nu\Delta y(h,x)=0,
  & (h,x)\in(0,\infty)\times\Omega,\\
y(h,x)=0,
  & (h,x)\in(0,\infty)\times\partial\Omega,\\
y(0,\cdot)=f,
  & \text{in }L^2(\Omega;\mathbb R^m).
\end{cases}
\end{displaymath}
For every $a>0$, this problem has a unique weak solution satisfying
\begin{displaymath}
y\in C([0,a];L^2(\Omega;\mathbb R^m))
\cap L^2(0,a;H_0^1(\Omega;\mathbb R^m)),
\end{displaymath}
with
\begin{displaymath}
\partial_h y\in
L^2\bigl(0,a;(H_0^1(\Omega;\mathbb R^m))'\bigr).
\end{displaymath}
The boundary condition is understood in the Sobolev trace sense,
and the initial condition holds strongly in $H$.
Existence, uniqueness and $H$-continuity follow from the Dirichlet
semigroup construction
\cite[Theorem~3.1.4, p.~34, and the discussion of the heat equation,
p.~35]{ArendtHeatKernels2005}. Its spectral energy identity,
\begin{displaymath}
\frac12\|y(h)\|_H^2+
\nu\int_0^h\|\nabla y(s)\|_{L^2(\Omega;\mathbb R^{m\times3})}^2\,\mathrm ds
=\frac12\|f\|_H^2\qquad(0\le h\le a),
\end{displaymath}
provides the stated $L^2(0,a;H_0^1(\Omega;\mathbb R^m))$ bound;
moreover $\partial_hy=\nu\Delta_Dy\in
L^2(0,a;(H_0^1(\Omega;\mathbb R^m))')$.
Uniqueness in this weak class also follows from the same energy identity.

For each $h\geq0$, we define
\begin{displaymath}
S(h):H\longrightarrow H,\qquad S(h)f=y(h,\cdot).
\end{displaymath}
Here $h$ is an auxiliary time variable, independent of the physical
time $t\in(0,T)$ in the original system. When applied to a
time-dependent function $w(t,\cdot)$, the operator $S(h)$ acts only
on its spatial variable, with the physical time $t$ held fixed.

The semigroup also acts as a strongly continuous contraction on
$L^1(\Omega;\mathbb R^m)$ and, by duality through $V$, on
$V'=(H_0^1(\Omega;\mathbb R^m))'$. On a common input the two
extensions yield the same spatial distribution. We shall prove this
compatibility by testing against $V\cap L^\infty(\Omega;\mathbb R^m)$
and using the symmetric heat kernel; it is not an extension of
$S(h)$ to all of $\mathcal D'(\Omega;\mathbb R^m)$.

The positive closed Dirichlet form on $H$ is
\begin{displaymath}
\mathfrak a(u,v)=\int_\Omega\nabla u:\nabla v\,\mathrm dx,
\qquad D(\mathfrak a)=V.
\end{displaymath}
Its associated nonnegative self-adjoint operator is $L=-\Delta_D$,
acting componentwise; the scalar Dirichlet Laplacian and its
self-adjoint semigroup are identified in
\cite[Theorem~3.1.4, p.~34]{ArendtHeatKernels2005}.
Consequently $S(h)=e^{-\nu hL}$.
To make this exponential concrete, let $(e_k)$ be an orthonormal family
of Dirichlet eigenfunctions, $Le_k=\lambda_k e_k$, with
$\lambda_k>0$. In a scalar component,
\begin{displaymath}
S(h)f=\sum_k e^{-\nu h\lambda_k}(f,e_k)_{L^2(\Omega)}e_k.
\end{displaymath}
Thus ``spectral calculus'' simply means multiplying each eigenmode by
$e^{-\nu h\lambda_k}$. The form domain satisfies
$D(L^{1/2})=H_0^1(\Omega)$ componentwise. Our estimates use this
first-order Sobolev space, not the second-order space $H^2(\Omega;\mathbb R^m)\cap H_0^1(\Omega;\mathbb R^m)$; on a nonsmooth Lipschitz domain one cannot generally identify that space with $D(L)$.
For Banach spaces $X$ and $Y$, we write $\mathcal L(X,Y)$ for the space
of bounded linear operators from $X$ to $Y$, equipped with the operator norm.

\begin{lemma}[Smoothing by the Dirichlet heat semigroup]\label{lem:heat-smoothing}
For every $h>0$, the compatible extensions of $S(h)$ satisfy
\begin{equation}\label{eq:semigroup-spectral}
\begin{aligned}
\|S(h)\|_{\mathcal L(V',H)}
    &\le (2e\nu h)^{-1/2},\\
\|S(h)\|_{\mathcal L(H,V)}
    &\le (2e\nu h)^{-1/2},\\
\|\Delta_D S(h)\|_{\mathcal L(V,H)}
    &\le (2e\nu h)^{-1/2},\\
\|S(h)\|_{\mathcal L(L^1(\Omega;\mathbb R^m),H)}
    &\le (8\pi\nu h)^{-3/4}.
\end{aligned}
\end{equation}
Moreover, $S(h)$ is strongly continuous on $L^1(\Omega;\mathbb R^m)$,
$H$, $V$, and $V'$, and maps $H$ into
$V\cap L^\infty(\Omega;\mathbb R^m)$ for each $h>0$.
\end{lemma}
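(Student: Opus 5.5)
The three Hilbert-space bounds come from spectral calculus for $L=-\Delta_D$, and the $L^1\to H$ bound comes from Gaussian domination of the Dirichlet heat kernel. We work componentwise, so it suffices to treat scalar functions. The Euclidean-vector versions then follow because all estimates are linear and diagonal in the components.

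\textbf{Hilbert-space bounds.} Since $S(h)=e^{-\nu hL}$ with $L\ge0$ self-adjoint and $\|v\|_V=\|L^{1/2}v\|_H$, we have $V'=D(L^{-1/2})$ with norm $\|L^{-1/2}f\|_H$. In all three cases the operator norm equals
\[
\sup_{\lambda>0}\lambda^{1/2}e^{-\nu h\lambda}.
\]
This holds for $V'\to H$ via $S(h)L^{1/2}$, for $H\to V$ via $L^{1/2}S(h)$, and for $\Delta_DS(h):V\to H$ via $LS(h)L^{-1/2}$. Elementary calculus gives the maximum at $\lambda=1/(2\nu h)$, with value $(2e\nu h)^{-1/2}$. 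On $V'$ we define $S(h)$ as the transpose of $S(h)|_V$. Self-adjointness shows that it agrees with the $H$-action on $H\subset V'$. Strong continuity on $H$, $V$ and $V'$ follows from the spectral representation and dominated convergence on the spectral measure.

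\textbf{The $L^1\to H$ bound.} We use domination of the Dirichlet kernel by the free kernel, $0\le p^\Omega_t(x,y)\le(4\pi t)^{-3/2}e^{-|x-y|^2/4t}$ with $t=\nu h$. This domination follows from positivity and domain monotonicity, available for instance in \cite{ArendtHeatKernels2005} via extension by zero and the comparison principle. It gives
\[
\|S(h)\|_{L^2\to L^\infty}^2=\sup_x\int_\Omega p_t(x,y)^2\,\mathrm dy\le\sup_x p_{2t}(x,x)\le(8\pi t)^{-3/2}.
\]
Here we use the semigroup property and symmetry, $\int p_t(x,y)^2\,\mathrm dy=p_{2t}(x,x)$, together with the Gaussian bound at time $2t$. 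By duality with the symmetric kernel, $\|S(h)\|_{L^1\to L^2}=\|S(h)\|_{L^2\to L^\infty}\le(8\pi\nu h)^{-3/4}$, which is exactly the constant claimed. For the vector case we apply this to each component and use $\bigl(\sum_i\|f_i\|_{L^1}^2\bigr)^{1/2}\le\|\,|f|\,\|_{L^1}$ (Minkowski), so the same constant survives.

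\textbf{$L^1$ action, compatibility and mapping properties.} Define $S(h)$ on $L^1$ by the kernel. Sub-Markovianity ($\int_\Omega p_t(x,y)\,\mathrm dy\le1$) makes it an $L^1$ contraction. Strong continuity on $L^1$ follows from strong continuity on the dense subspace $L^2$, combined with uniform boundedness. Compatibility on a common input $f\in V'\cap L^1$ is checked by pairing with $\varphi\in V\cap L^\infty$. Both extensions give $\langle f,S(h)\varphi\rangle$, using kernel symmetry and the fact that $S(h)\varphi\in V\cap L^\infty$. Finally, $S(h)H\subset V$ follows from the $H\to V$ bound, and $S(h)H\subset L^\infty$ from the $L^2\to L^\infty$ bound above.

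\textbf{Main obstacle.} The delicate step is justifying the Gaussian domination and the symmetric, sub-Markovian kernel on a merely Lipschitz domain. The plan is to cite the positivity and domination results for Dirichlet semigroups in \cite{ArendtHeatKernels2005}, which need no boundary regularity, rather than to use eigenfunction regularity.
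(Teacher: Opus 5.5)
Your proposal is correct and follows essentially the paper's route. The three Hilbert-space bounds come from spectral calculus with $\sup_{\lambda>0}\lambda e^{-2\nu h\lambda}=(2e\nu h)^{-1}$, and the $L^1\to H$ bound comes from Gaussian domination of the Dirichlet kernel. The only real difference is in how you organize the duality: you bound $\|S(h)\|_{L^2\to L^\infty}$ via $p_{2t}(x,x)$ and dualize, whereas the paper bounds $\|S(h)f\|_H\le\|G_{\nu h}\|_{L^2(\mathbb R^3)}\|f\|_{L^1}$ directly and then dualizes to get $H\to L^\infty$. The paper's direct bound gives the same constant $(8\pi\nu h)^{-3/4}$ without evaluating the kernel on the null diagonal.
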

\begin{proof} $ $\\ 
In the spectral representation, the dual norm is
$\|f\|_{V'}^2=\sum_k |f_k|^2/\lambda_k$ and
$\|v\|_V^2=\sum_k\lambda_k|v_k|^2$ (componentwise).
Therefore
\begin{displaymath}
\|S(h)f\|_H^2\le
\left(\sup_{\lambda>0}\lambda e^{-2\nu h\lambda}\right)
\sum_k\frac{|f_k|^2}{\lambda_k}
=(2e\nu h)^{-1}\|f\|_{V'}^2.
\end{displaymath}
Here $L^{1/2}$ denotes the nonnegative spectral square root of
$L=-\Delta_D$: if $v=\sum_k v_ke_k$, then
$L^{1/2}v=\sum_k\sqrt{\lambda_k}v_ke_k$, with
$D(L^{1/2})=V$ and $\|L^{1/2}v\|_H=\|\nabla v\|_{L^2(\Omega;\mathbb R^{m\times3})}$.
Consequently,
\begin{displaymath}
\begin{aligned}
 \|S(h)z\|_V^2
 &=\sum_k\lambda_k e^{-2\nu h\lambda_k}|z_k|^2
 \le (2e\nu h)^{-1}\|z\|_H^2,\\
 \|LS(h)v\|_H^2
 &=\sum_k\lambda_k^2e^{-2\nu h\lambda_k}|v_k|^2
 \le (2e\nu h)^{-1}\|v\|_V^2.
\end{aligned}
\end{displaymath}
These are exactly the $H\to V$ and $V\to H$ operator estimates,
with $\Delta_DS(h)=-LS(h)$ in the last line.
The free heat kernel in $\mathbb R^3$ is
$G_{\nu h}(z)=(4\pi\nu h)^{-3/2}\exp[-|z|^2/(4\nu h)]$.
The Dirichlet heat semigroup is dominated, for nonnegative data,
by the whole-space heat flow on the zero extension. One way to see
this is to compare the two nonnegative weak heat solutions, using
the positive-part energy inequality and the zero Dirichlet trace of
the domain solution; this yields the kernel domination
$0\le k_\Omega(h,x,y)\le G_{\nu h}(x-y)$ a.e. for $h>0$;
see \cite[Theorems~4.2.1 and~4.2.4, pp.~48 and~50]{ArendtHeatKernels2005}.
A direct Gaussian integral shows
\begin{displaymath}
\|G_{\nu h}\|_{L^2(\mathbb R^3)}^2
=(4\pi\nu h)^{-3}\int_{\mathbb R^3}
 e^{-|z|^2/(2\nu h)}\,\mathrm dz
=(8\pi\nu h)^{-3/2}.
\end{displaymath}
This is the kernel estimate needed for the mapping
$S(h):L^1(\Omega;\mathbb R^m)\to L^2(\Omega;\mathbb R^m)$.
It requires neither a bound into $H^2(\Omega;\mathbb R^m)$ nor an
$H^2(\Omega;\mathbb R^m)$  characterization of the operator domain $D(\Delta_D)$.
Young's inequality, componentwise positivity and the Euclidean norm yield
\begin{displaymath}
\|S(h)f\|_H\le\|G_{\nu h}\|_{L^2(\mathbb R^3)}
\|f\|_{L^1(\Omega;\mathbb R^m)}
=(8\pi\nu h)^{-3/4}\|f\|_{L^1(\Omega;\mathbb R^m)}.
\end{displaymath}
Self-adjoint duality gives $S(h):H\to L^\infty(\Omega;\mathbb R^m)$,
while the second spectral bound gives $S(h):H\to V$.
Strong continuity on $H,V,V'$ follows spectrally; on $L^1(\Omega;\mathbb R^m)$  it follows
by density of $L^1(\Omega;\mathbb R^m)$  and contractivity of the scalar Dirichlet
semigroup, extended componentwise using the nonnegative kernel.
The scalar extrapolation result
\cite[Theorem~4.4.1, p.~54]{ArendtHeatKernels2005} establishes strong
continuity for $1\le p<\infty$ (not, in general, for $p=\infty$).
The relevant contractivity estimates here are
\begin{displaymath}
\|S(h)\|_{\mathcal L(L^1(\Omega;\mathbb R^m),
                          L^1(\Omega;\mathbb R^m))}\le1,
\qquad
\|S(h)\|_{\mathcal L(L^\infty(\Omega;\mathbb R^m),
                          L^\infty(\Omega;\mathbb R^m))}\le1,
\end{displaymath}
which follow from positivity and Gaussian domination of the scalar
Dirichlet kernel, also for the Euclidean norm of the vector-valued
componentwise extension.
\end{proof}
The extensions on $L^1(\Omega;\mathbb R^m)$ and $V'$ are compatible
on their intersection, as verified by transposition in the proof of
Lemma~\ref{lem:chain}. The displayed bounds are estimates for
{fixed} $h>0$. Their right-hand sides diverge when $h\downarrow0$;
this is harmless because the limit uses strong continuity on the
unsmoothed base spaces $L^1(\Omega;\mathbb R^m)$, $V$, and $V'$.
There is no claim of a uniform $L^1\to H$ or $V'\to H$ estimate.

\begin{lemma}[Conditional vector chain rule in the sum space]\label{lem:chain}
Assume
\begin{equation}\label{eq:sumspace}
w\in C([0,T];L^1(\Om;\Rn))\cap L^2(0,T;\VV),\qquad
\partial_tw=A+B\quad\text{in }\mathcal D'(Q_T;\mathbb R^m),
\end{equation}
where $A\in L^2(0,T;\VV')$ and $B\in L^1(\QT;\Rn)$. Let $\Phi\in C^2(\Rn;\mathbb R)$ satisfy
\begin{equation}\label{eq:phi}
\Phi(0)=0,\quad D\Phi(0)=0,\quad
\|D\Phi\|_{L^\infty(\mathbb R^m;\mathbb R^m)}<\infty,\quad
\|D^2\Phi\|_{L^\infty(\mathbb R^m;\mathcal L(\mathbb R^m,\mathbb R^m))}<\infty.
\end{equation}

Define the real-valued function $E:[0,T]\to\mathbb R$ by

$$E(t)=\int_\Omega\Phi(w(t,x))\,\mathrm dx.$$
Then $E$ is absolutely continuous on $[0,T]$, and for every
$0\le s\le t\le T$,
\begin{equation}\label{eq:sumchain}
\begin{split}
\int_\Om\Phi(w(t)) -\int_\Om\Phi(w(s))
={}&\int_s^t\dual{A(\tau)}{D\Phi(w(\tau))}\dd\tau\\
&+\int_s^t\!\int_\Om B(\tau,x)\cdot D\Phi(w(\tau,x))\dd x\dd\tau.
\end{split}
\end{equation}
The derivative $D\Phi:\mathbb R^m\to\mathbb R^m$ is globally
Lipschitz because its Jacobian
$D^2\Phi:\mathbb R^m\to\mathcal L(\mathbb R^m,\mathbb R^m)$
is bounded in the induced operator norm. Since $D\Phi(0)=0$,
Sobolev composition gives $D\Phi(w(t,\cdot))\in V$ for almost
every $t$, with
\begin{displaymath}
\begin{aligned}
 \|D\Phi(w(t))\|_{L^2(\Omega;\mathbb R^m)}
 &\le M\|w(t)\|_{L^2(\Omega;\mathbb R^m)},\\
 \|\nabla[D\Phi(w(t))]\|_{L^2(\Omega;\mathbb R^{m\times3})}
 &\le M\|\nabla w(t)\|_{L^2(\Omega;\mathbb R^{m\times3})},
\end{aligned}
\end{displaymath}
where $M=\sup_{\xi\in\mathbb R^m}|D^2\Phi(\xi)|_{\mathrm{op}}$.
Thus $D\Phi(w)\in L^2(0,T;V)$, so the duality bracket is legitimate.
A proof of the composition identity by smooth approximation is given
in Corollary~\ref{cor:diffusion}.
\end{lemma}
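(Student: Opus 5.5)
We regularize $w$ in space with the Dirichlet heat semigroup, apply the classical Hilbert-space chain rule to the regularized function, and then let the regularization parameter go to zero. We fix $h>0$ and set $w_h(t)=S(h)w(t)$, $A_h=S(h)A$, $B_h=S(h)B$, where $S(h)$ acts on the $V'$-valued term and on the $L^1$-valued term through the respective extensions of Lemma~\ref{lem:heat-smoothing}.

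\textbf{Step 1: the regularized equation.} We first show $\partial_tw_h=A_h+B_h$ in $\mathcal D'(0,T;H)$. We test $\partial_tw=A+B$ with $\varphi(t,x)=\eta(t)S(h)\psi(x)$, where $\eta\in C_c^\infty(0,T)$ and $\psi\in C_c^\infty(\Omega;\mathbb R^m)$. Since $S(h)\psi\in V\cap L^\infty$ rather than $C_c^\infty$, we must extend the weak identity by density to tests in $C_c^\infty(0,T;V\cap L^\infty)$. This works because $A\in L^2(0,T;V')$ pairs continuously with $V$, $B\in L^1$ pairs with bounded functions, and $w\in L^2(0,T;V)$.

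The extension requires approximating $S(h)\psi$ simultaneously in $V$ and boundedly a.e. We do this by mollified truncations $\chi_k\,\rho_\epsilon*(\cdot)$ and dominated convergence. Transposition then uses the symmetry of the heat kernel: $\langle A,S(h)\psi\rangle=(S(h)A,\psi)$ and $\int B\cdot S(h)\psi=\int S(h)B\cdot\psi$. The same identities show that the $L^1$ and $V'$ extensions agree on common inputs.

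The estimates \eqref{eq:semigroup-spectral} give $A_h\in L^2(0,T;H)$ and $B_h\in L^1(0,T;H)$. They also give $w_h\in L^2(0,T;V)$ and, via the $L^1\to H$ bound and $w\in C([0,T];L^1)$, $w_h\in C([0,T];H)$. Hence $w_h\in W^{1,1}(0,T;H)$.

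\textbf{Step 2: chain rule for the regularization.} For $W^{1,1}(0,T;H)$ functions, $t\mapsto\int_\Omega\Phi(w_h(t))$ is absolutely continuous. This holds because $\Phi$ is $C^1$ with Lipschitz gradient, so $H\ni v\mapsto\int\Phi(v)$ is Fréchet differentiable with derivative $D\Phi(v)$ (note $|\Phi(z)|\le M|z|^2/2$). We obtain
\[
\int_\Omega\Phi(w_h(t))-\int_\Omega\Phi(w_h(s))=\int_s^t\bigl(A_h(\tau),D\Phi(w_h(\tau))\bigr)_H\dd\tau+\int_s^t\!\!\int_\Omega B_h\cdot D\Phi(w_h)\dd x\dd\tau.
\]
This can be justified directly by mollifying in time.

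\textbf{Step 3: the limit $h\downarrow0$.} We pass to the limit term by term.
\begin{itemize}[leftmargin=2em]
\item \emph{Left side.} By strong continuity of $S(h)$ on $L^1$, $w_h(t)\to w(t)$ in $L^1$ for each fixed $t$. We cannot simply use that $\Phi$ has at most quadratic growth. Instead we use the bound $|\Phi(a)-\Phi(b)|\le\|D\Phi\|_\infty|a-b|$, which follows from the bounded gradient and makes $v\mapsto\int\Phi(v)$ Lipschitz on $L^1$. This gives convergence at every $t$, which is exactly where the $C([0,T];L^1)$ hypothesis enters.
\item \emph{$A$-term.} Rewrite $(A_h,D\Phi(w_h))=\langle A,S(h)D\Phi(w_h)\rangle$ by transposition. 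We have $w_h\to w$ in $L^2(0,T;V)$ by strong continuity and domination $\|S(h)\|_{\mathcal L(V,V)}\le1$. Continuity of Nemytskii composition with the Lipschitz map $D\Phi$ in $L^2(0,T;V)$ then gives $D\Phi(w_h)\to D\Phi(w)$ in $L^2(0,T;V)$. Composing with $S(h)$ once more converges by uniform boundedness plus strong continuity.
\item \emph{$B$-term.} Write $\int B\cdot S(h)D\Phi(w_h)$. The factor $S(h)D\Phi(w_h)$ is bounded by $\|D\Phi\|_\infty$ since $S(h)$ contracts $L^\infty$. It converges a.e. along a subsequence, because it converges in $L^2(Q_T)$. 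Dominated convergence then gives the $B$-term in \eqref{eq:sumchain}.
\end{itemize}
Absolute continuity of $E$ follows from \eqref{eq:sumchain}, since the integrands lie in $L^1(0,T)$.

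\textbf{Main obstacle.} I expect the main obstacle to be the continuity of $v\mapsto D\Phi(v)$ in $V$: it is only a Lipschitz, hence bounded, composition, not a continuous one in general. Here it holds because $\Phi\in C^2$ makes $D^2\Phi$ continuous. Indeed, $\nabla D\Phi(w_h)=D^2\Phi(w_h)\nabla w_h$, and $D^2\Phi(w_h)\to D^2\Phi(w)$ a.e. along subsequences with a uniform bound, so dominated convergence gives the $L^2$ convergence. The subsequence argument combined with uniqueness of the limit upgrades this to the full family. The compatibility and density extension in Step 1 is the other delicate point.
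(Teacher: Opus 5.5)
Your proposal is correct and follows essentially the same route as the paper: spatial regularization by the Dirichlet heat semigroup, extension of the distributional identity to bounded Sobolev tests followed by transposition, the Hilbert-valued chain rule for $w_h\in W^{1,1}(0,T;H)$, and removal of the smoothing using continuity of $D^2\Phi$ and dominated convergence along subsequences. The differences are minor: you transpose $S(h)$ onto the bounded test in the $A$- and $B$-terms instead of using $A_h\to A$ in $L^2(0,T;V')$ and $B_h\to B$ in $L^1(Q_T;\mathbb R^m)$, and you pass to the limit in the entropy values by $L^1$ convergence at the two fixed times, which suffices in place of the paper's uniform-in-time compactness argument.
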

\begin{proof} $ $\\
The proof has six stages. We first regularize the three quantities, then
justify extending interior tests to a space on which the semigroup can
act by transposition. The regularized field belongs to
$W^{1,1}(0,T;L^2(\Omega;\mathbb R^m))$, with time derivative in
$L^1(0,T;L^2(\Omega;\mathbb R^m))$, so we can apply an ordinary
Hilbert-valued chain rule. We
finally remove the smoothing separately in the $V'$ pairing, the $L^1((0,T)\times\Omega;\mathbb R^m)$ product, and the endpoint entropy values.

\medskip\noindent{Step 1: Regularize while keeping the two times distinct.}
Fix $h>0$ and set $w_h(t)=S(h)w(t)$, $A_h(t)=S(h)A(t)$, and
$B_h(t)=S(h)B(t)$. The smoothing estimates \eqref{eq:semigroup-spectral} imply
\begin{displaymath}
\begin{aligned}
w_h&\in C([0,T];H),\\
A_h&\in L^2(0,T;H), &
\|A_h\|_{L^2(0,T;H)}&\le(2e\nu h)^{-1/2}\|A\|_{L^2(0,T;V')},\\
B_h&\in L^1(0,T;H), &
\|B_h\|_{L^1(0,T;H)}&\le(8\pi\nu h)^{-3/4}
\|B\|_{L^1((0,T)\times\Omega;\mathbb R^m)}.
\end{aligned}
\end{displaymath}
The two smoothings refer to the spatial variable only. Their estimates
need not be uniform as $h\downarrow0$.

\medskip\noindent{Step 2: Extend the distributional identity to bounded Sobolev tests.}
Put $X=V\cap L^\infty(\Omega;\mathbb R^m)$. Given $\psi\in X$, choose
$f_j\in C_c^\infty(\Omega;\mathbb R^m)$ approaching $\psi$ strongly in
$V$ and almost everywhere (after extraction). Choose $M>\|\psi\|_{L^\infty(\Omega;\mathbb R^m)}$. Select a scalar
$\theta\in C^\infty(\mathbb R)$ such that
$\theta(0)=0$, $\theta(r)=r$ for $|r|\le M$, and both
$\theta$ and $\theta'$ are bounded For example, take $\chi\in C_c^\infty(\mathbb R)$ with
$0\le\chi\le1$ and $\chi=1$ on $[-M,M]$, and set
\begin{equation}\label{eq:author-scalar-cutoff}
 \theta(r)=r\chi(r),\qquad
 \theta'(r)=\chi(r)+r\chi'(r).
\end{equation}
Then $\theta\in C_c^\infty(\mathbb R)$,
$\theta(r)=r$ and $\theta'(r)=1$ on $[-M,M]$,
while both $\theta$ and $\theta'$ are bounded.
Apply $\theta$ to each component of $f_j$ and set
$\psi_j=(\theta(f_{j,1}),\ldots,\theta(f_{j,m}))$.
Because $f_j$ is smooth with compact support and $\theta(0)=0$,
$\psi_j\in C_c^\infty(\Omega;\mathbb R^m)$; moreover
$\sup_j\|\psi_j\|_{L^\infty(\Omega;\mathbb R^m)}
\le\sqrt m\,\|\theta\|_{L^\infty(\mathbb R)}$.

To prove the required strong convergence, fix a component $i$ and a
spatial derivative $\partial_k$, $k=1,2,3$. Since
$\theta(\psi_i)=\psi_i$ and $\theta'(\psi_i)=1$ almost everywhere,
\begin{displaymath}
\partial_k\theta(f_{j,i})-\partial_k\psi_i
=\theta'(f_{j,i})[\partial_k f_{j,i}-\partial_k\psi_i]
 +[\theta'(f_{j,i})-1]\partial_k\psi_i.
\end{displaymath}
The first term tends to zero in $L^2(\Omega)$ by the uniform bound
on $\theta'$ and $f_j\to\psi$ in $V$. On a subsequence converging
pointwise almost everywhere, the second term tends to zero pointwise
and is dominated by $2\|\theta'\|_\infty|\partial_k\psi_i|$ in
$L^2(\Omega)$. Dominated convergence gives its $L^2(\Omega)$ limit;
the subsequence criterion then yields convergence for the original
sequence. Since $\theta$ is Lipschitz and fixes $\psi$, its function
values also converge in $L^2(\Omega;\mathbb R^m)$. Summing over
$i,k$ proves $\psi_j\to\psi$ in $H_0^1(\Omega;\mathbb R^m)$.
The full gradient error is a field in
$L^2(\Omega;\mathbb R^{m\times3})$, not a scalar.

For $\eta\in C_c^\infty(0,T)$, the original distributional equation
with test $\eta(t)\psi_j(x)$ reads
\begin{displaymath}
-\int_0^T\eta'(t)(w(t),\psi_j)_H\,\mathrm dt
=\int_0^T\eta(t)\langle A(t),\psi_j\rangle_{V',V}\,\mathrm dt
 +\int_0^T\!\int_\Omega\eta(t)B(t,x)\cdot\psi_j(x)
\,\mathrm dx\,\mathrm dt.
\end{displaymath}
We now pass to the limit in each term separately. For the first,
$|\int_0^T\eta'(t)(w(t),\psi_j-\psi)_H\,\mathrm dt|
\le\|\eta'\|_{L^2(0,T)}\|w\|_{L^2(0,T;H)}
\|\psi_j-\psi\|_H\to0$.
For the dual term,
\begin{displaymath}
\left|\int_0^T\eta(t)\langle A(t),\psi_j-\psi\rangle\,\mathrm dt\right|
\le\|\eta\|_{L^2(0,T)}\|A\|_{L^2(0,T;V')}
\|\psi_j-\psi\|_V\to0.
\end{displaymath}
Finally $\psi_j\to\psi$ almost everywhere, with a uniform bound
in $L^\infty(\Omega;\mathbb R^m)$. The absolute value of the
remaining integrand is bounded by
$K|\eta(t)|\,|B(t,x)|$, which is integrable on
$(0,T)\times\Omega$, so dominated convergence applies. The
identity therefore holds for every $\psi\in X$. It is
{not} legitimate to test against an arbitrary unbounded
$V$ function in the $B$ term.

\medskip\noindent{Step 3: Transpose the semigroup and identify the temporal derivative.}
By the preceding smoothing estimates, $S(h)z\in X$ for every $z\in H$.
Using it as a test in Step~2 and the symmetric Dirichlet kernel gives
\begin{displaymath}
(S(h)w,z)_H=(w,S(h)z)_H,\quad
(S(h)A,z)_H=\langle A,S(h)z\rangle_{V',V},\quad
(S(h)B,z)_H=\int_\Omega B\cdot S(h)z.
\end{displaymath}
For a fixed time where $B(t)\in L^1(\Omega;\mathbb R^m)$, take
$B_j(t)\in L^1(\Omega;\mathbb R^m)\cap H$ with
$B_j(t)\to B(t)$ in $L^1(\Omega;\mathbb R^m)$.
The self-adjoint identity in $H$ gives
$(S(h)B_j,z)_H=(B_j,S(h)z)_H$. The left sides converge since
\begin{displaymath}
\|S(h)(B_j-B)\|_H\le(8\pi\nu h)^{-3/4}
\|B_j-B\|_{L^1(\Omega;\mathbb R^m)},
\end{displaymath}
and the right sides converge since
$S(h)z\in L^\infty(\Omega;\mathbb R^m)$.
This proves the $L^1$ transposition formula. For time-dependent $B$,
the $L^1\to H$ bound gives a strongly measurable element of
$L^1(0,T;H)$ by bounded linearity and Bochner approximation.
If an input belongs to both $L^1(\Omega;\mathbb R^m)$ and $V'$
and defines the same interior distribution, the Step~2 extension to
$X$ shows that the two resulting transposed pairings are equal on
every $z\in H$. This proves compatibility without invoking
nonlocal operators on arbitrary distributions. Thus, for each $z\in H$ and each temporal test
$\eta$, Step~2 yields
\begin{displaymath}
-\int_0^T\eta'(t)(w_h(t),z)_H\,\mathrm dt
=\int_0^T\eta(t)(A_h(t)+B_h(t),z)_H\,\mathrm dt.
\end{displaymath}
We have established $\partial_t w_h=A_h+B_h$ in
$\mathcal D'(0,T;H)$, with $A_h+B_h\in L^1(0,T;H)$.
Since $w_h\in C([0,T];H)$, define
$F_h(t)=w_h(0)+\int_0^t(A_h+B_h)(r)\,\mathrm dr$.
This belongs to $W^{1,1}(0,T;H)$ and has derivative $A_h+B_h$
almost everywhere by the fundamental theorem of calculus for
Bochner integrals: if $g\in L^1(0,T;H)$, then
$t\mapsto\int_0^t g(r)\,\mathrm dr$ is absolutely continuous as an
$H$-valued function and has derivative $g(t)$ almost everywhere
(see, for example, the Banach-space evolution framework in
\cite{LionsMagenes1972Vol1}). The
distributional derivative of $w_h-F_h$ is zero, hence this difference
is constant on $(0,T)$; continuity at $t=0$ identifies that constant
with zero. Thus $w_h=F_h$ for every $t\in[0,T]$, including the
endpoints. This argument proves precisely the temporal regularity
needed here rather than relying on an unspecified theorem number.

\medskip\noindent{Step 4: Apply the Fr\'echet chain rule to an $H$-valued path.}
We use the Banach-space chain rule for an absolutely continuous
curve and a continuously Fr\'echet differentiable scalar functional:
$(\mathcal F\circ w_h)'(t)=D\mathcal F(w_h(t))[w_h'(t)]$
almost everywhere. This is the standard chain-rule mechanism of
Bochner--Sobolev calculus (cf.~\cite{LionsMagenes1972Vol1});
the specific derivative and remainder estimate needed here are
verified directly below.
On $H$ define $\mathcal F(v)=\int_\Omega\Phi(v(x))\,\mathrm dx$.
The conditions on $\Phi$ and finite measure of $\Omega$ give
\begin{displaymath}
\begin{aligned}
D\mathcal F(v)[z]=&\int_\Omega D\Phi(v)\cdot z\,\mathrm dx,
\\
|\mathcal F(v+z)-\mathcal F(v)-D\mathcal F(v)[z]|
\le & \frac12\|D^2\Phi\|_{L^\infty(\mathbb R^m;\mathcal L(\mathbb R^m,\mathbb R^m))}\|z\|_H^2.
\end{aligned}
\end{displaymath}
Consequently $\mathcal F\in C^1(H;\mathbb R)$, with
$D\mathcal F:H\to\mathcal L(H,\mathbb R)$ globally Lipschitz
and bounded. The chain rule in this particular setting can be proved
directly: at almost every $t$, $w_h$ is strongly differentiable as
an $H$-valued function. Fr\'echet differentiability implies
$\frac{\mathrm d}{\mathrm dt}\mathcal F(w_h(t))
=D\mathcal F(w_h(t))[w_h'(t)]$ there; indeed the Taylor remainder
is bounded by $(M/2)\|w_h(t+r)-w_h(t)\|_H^2$, which divided by
$|r|$ tends to zero at each strong differentiability time.
The composition is absolutely continuous because $\mathcal F$ is
Lipschitz and $w_h\in W^{1,1}(0,T;H)$, while its derivative is
integrable since
$\|D\mathcal F(w_h)\|_{\mathcal L(H,\mathbb R)}
\le|\Omega|^{1/2}\|D\Phi\|_{L^\infty(\mathbb R^m;\mathbb R^m)}$.
Integrating its almost-everywhere identity between arbitrary
$s,t\in[0,T]$ yields
\begin{equation}\label{eq:chainh}
\int_\Omega\Phi(w_h(t))-\int_\Omega\Phi(w_h(s))
=\int_s^t (A_h,D\Phi(w_h))_H\,\mathrm d\tau
+\int_s^t (B_h,D\Phi(w_h))_H\,\mathrm d\tau
\end{equation}
for every pair of endpoints. No false pointwise domination of temporal
difference quotients by $|\partial_tw_h(t)|$ is used.

\medskip\noindent{Step 5: Remove spatial smoothing from both source pairings.}
Strong continuity of $S(h)$ on $V$ and $V'$ implies
$w_h\to w$ in $L^2(0,T;V)$ and $A_h\to A$ in
$L^2(0,T;V')$. Its $L^1(\Omega;\mathbb R^m)$ contraction and strong continuity give
$B_h\to B$ in $L^1((0,T)\times\Omega;\mathbb R^m)$.
The globally Lipschitz map $D\Phi$ satisfies
\begin{displaymath}
\nabla[D\Phi(w_h)] = D^2\Phi(w_h)\nabla w_h.
\end{displaymath}
To justify convergence in $L^2(0,T;V)$, subtract the analogous
expression for $w$, and split the gradient difference into
\begin{displaymath}
D^2\Phi(w_h)(\nabla w_h-\nabla w)
+[D^2\Phi(w_h)-D^2\Phi(w)]\nabla w.
\end{displaymath}
The first term tends to zero by the uniform Hessian bound and strong
gradient convergence. For the second, let
$Q_T=(0,T)\times\Omega$ and denote the difference by
$R_h=[D^2\Phi(w_h)-D^2\Phi(w)]\nabla w$ in
$L^2(Q_T;\mathbb R^{m\times3})$.
Every sequence $h_n\downarrow0$ has a subsequence on which
$w_{h_n}\to w$ almost everywhere, since $w_h\to w$ strongly
in $L^2(Q_T;\mathbb R^m)$. Along that subsequence,
$R_{h_n}\to0$ pointwise by continuity of $D^2\Phi$, while
$|R_{h_n}|^2\le4M^2|\nabla w|^2\in L^1(Q_T;\mathbb R)$.
Dominated convergence proves $R_{h_n}\to0$ strongly in
$L^2(Q_T;\mathbb R^{m\times3})$. If the full limit failed,
there would exist a sequence $h_n\downarrow0$ for which the norms
stay bounded away from zero; the preceding subsequence argument
contradicts this. Thus the full family converges strongly.
The $L^2$ norms of the functions themselves converge because $D\Phi$
is Lipschitz, while $D\Phi(0)=0$ preserves $H_0^1$ traces. Therefore
$D\Phi(w_h)\to D\Phi(w)$ in $L^2(0,T;V)$, and
\begin{displaymath}
\int_s^t(A_h,D\Phi(w_h))_H\,\mathrm d\tau
\longrightarrow
\int_s^t\langle A,D\Phi(w)\rangle_{V',V}\,\mathrm d\tau.
\end{displaymath}
For the $B$ term, use the exact decomposition
\begin{displaymath}
(B_h-B)\cdot D\Phi(w_h)
+B\cdot[D\Phi(w_h)-D\Phi(w)].
\end{displaymath}
The first term is controlled in the exact scalar-integrand norm by
\begin{displaymath}
\| (B_h-B)\cdot D\Phi(w_h)\|_{L^1(Q_T;\mathbb R)}
\le \|D\Phi\|_{L^\infty(\mathbb R^m;\mathbb R^m)}
\|B_h-B\|_{L^1(Q_T;\mathbb R^m)}\longrightarrow0.
\end{displaymath}
The second term converges in measure on $Q_T$ and is bounded in
absolute value by $2\|D\Phi\|_{L^\infty(\mathbb R^m;\mathbb R^m)}|B|$.
To make the weighted convergence explicit, given $\delta>0$,
\begin{displaymath}
\int_{Q_T}|B|\,|D\Phi(w_h)-D\Phi(w)|
\le\delta\|B\|_{L^1(Q_T;\mathbb R^m)}
+2\|D\Phi\|_{L^\infty(\mathbb R^m;\mathbb R^m)}\int_{\{|D\Phi(w_h)-D\Phi(w)|>\delta\}}|B|.
\end{displaymath}
The last integral tends to zero by convergence in measure and
absolute continuity of the integral of $|B|$.
Thus the scalar source integrands converge in $L^1(Q_T;\mathbb R)$;
their spatial integrals converge in $L^1(0,T;\mathbb R)$.

\medskip\noindent{Step 6: Recover all entropy endpoints.}
The map $t\mapsto w(t)$ is continuous from the compact interval
$[0,T]$ into the metric space $L^1(\Omega;\mathbb R^m)$, hence its
image $K=\{w(t):t\in[0,T]\}$ is compact. For any $\delta>0$,
choose finitely many $f_1,\ldots,f_N\in K$ such that every
$w(t)$ is within $\delta$ of some $f_i$. Since $S(h)$ is contractive
on $L^1(\Omega;\mathbb R^m)$,
\begin{displaymath}
\sup_{t\in[0,T]}\|S(h)w(t)-w(t)\|_{L^1(\Omega;\mathbb R^m)}
\le 2\delta+\max_{1\le i\le N}
\|S(h)f_i-f_i\|_{L^1(\Omega;\mathbb R^m)}.
\end{displaymath}
For fixed $\delta$ the finite maximum tends to zero by strong
continuity; then let $\delta\downarrow0$. We obtain
\begin{equation}\label{eq:uniformh}
\sup_{0\le t\le T}
\|S(h)w(t)-w(t)\|_{L^1(\Omega;\mathbb R^m)}\longrightarrow0.
\end{equation}
Since $\Phi$ is globally Lipschitz, the entropy values at $s$ and $t$
converge uniformly as $h\downarrow0$. Combining all limits in
\eqref{eq:chainh} proves \eqref{eq:sumchain} for every
$0\le s\le t\le T$. More precisely, define
\begin{displaymath}
g(t)=\langle A(t),D\Phi(w(t))\rangle_{V',V}
+\int_\Omega B(t,x)\cdot D\Phi(w(t,x))\,\mathrm dx.
\end{displaymath}
By Cauchy--Schwarz in time and the $L^2(0,T;V)$ bound on
$D\Phi(w)$, the first summand belongs to $L^1(0,T;\mathbb R)$;
the second also belongs to that space because $D\Phi$ is bounded
and $B\in L^1(Q_T;\mathbb R^m)$. The established identity reads
$E(t)=E(0)+\int_0^t g(\tau)\,\mathrm d\tau$ for every $t\in[0,T]$,
which is the absolutely continuous representative of $E$.
This also proves the formula for every pair $0\le s\le t\le T$.\qedhere
\end{proof}

\begin{remark}[A complementary temporal-convolution derivation of Lemma~\ref{lem:chain}] 
\label{rem:temporal-convolution}
A second proof of the full identity \eqref{eq:sumchain} uses temporal
convolution instead of spatial semigroup regularization. It requires
the same $C([0,T];L^1(\Omega;\mathbb R^m))$  hypothesis for exact endpoints and a justification
of bounded Sobolev tests for the mollified equation. Let
$w\in C([0,T];L^1)\cap L^2(0,T;V)$,
$w_t=A+B$ with $A\in L^2(0,T;V')$ and $B\in L^1(Q_T)$,
and let $\Phi$ satisfy the hypotheses of Lemma~\ref{lem:chain}.
On a compact interior time interval convolve $w,A,B$ with the same
smooth time mollifier $\rho_h$. Then $w_h\in C^1([s,t];H)$   and
$w_h'=A_h+B_h$ as spatial distributions. The identity may be
tested against $D\Phi(w_h)\in V\cap L^\infty(\Omega;\mathbb R^m)$: density of
uniformly bounded smooth approximants in $V\cap L^\infty$
passes the $V'$ term by strong convergence in $V$ and the $L^1(Q_T;\mathbb R^m)$  
term by dominated convergence. Applying the elementary $H$-valued
chain rule to $v\mapsto\int_\Omega\Phi(v)$ gives
\begin{displaymath}
 \int_\Omega\!\bigl[\Phi(w_h(t))-\Phi(w_h(s))\bigr]
 =\int_s^t\!\langle A_h,D\Phi(w_h)\rangle\,\mathrm d\tau
 +\int_s^t\!\int_\Omega B_h\cdot D\Phi(w_h).
\end{displaymath}
Here $w_h\to w$ in $L^2(s,t;V)$, $A_h\to A$ in $L^2(s,t;V')$,
and $B_h\to B$ in $L^1(Q_T)$ locally in time.
The bounded continuous Hessian gives strong $L^2(s,t;V)$  
convergence of $D\Phi(w_h)$, by the Sobolev composition formula,
while the $B$ products pass by boundedness of $D\Phi$ and
weighted convergence in measure against $L^1(Q_T)$.
The $C([0,T];L^1(\Omega;\mathbb R^m))$   hypothesis yields convergence of the entropy at every
interior endpoint, and the resulting $L^1(0,T;\mathbb R)$  right-hand side allows
$s\downarrow0$, $t\uparrow T$. This gives the all-time identity.
Scalar renormalization and temporal approximation in
\cite[Definition~2.2 and Eq.~(3.22)]{BlanchardMurat1997} and
\cite[Definition~1 and Lemma~1]{BlanchardMuratRedwane2001}, together
with the scalar sum-space rules of
\cite[Lemma~9 and Corollary~11]{AlvarezVazquezJDE2008} and
\cite[Lemma~6 and Corollary~9]{AlvarezVazquez2009}, are important
antecedents. The vectorial dual-derivative formulations in
\cite[Lemmas~A.19--A.20]{Jungel2025} impose different temporal
hypotheses; the present argument verifies the additional $L^1$-term
pairing and the exact-time traces explicitly.
\end{remark}

\begin{remark}[Why the chain rule is conditional]\label{rem:conditional}
There are two logically distinct hypotheses in Lemma~\ref{lem:chain}.
Its derivative decomposition $\partial_tw=A+B$, with
$A\in L^2(0,T;V')$ and
$B\in L^1((0,T)\times\Omega;\mathbb R^m)$, makes the entropy
calculation possible once the chain rule is established. Its additional
requirement $w\in C([0,T];L^1(\Omega;\mathbb R^m))$ supplies actual
values at every time and permits the uniform endpoint limit
\eqref{eq:uniformh}. The first set of conditions is {not} being used
here to deduce the second.

In the existence proof continuity in $L^1(\Omega;\mathbb R^m)$ of the {constructed}
limit will be proved independently by estimate~\eqref{eq:cauchyL1};
only afterwards do we apply the lemma and establish continuity in
$L^2(\Omega;\mathbb R^m)$. On the other hand, if $U$ and $V$ are two {arbitrary}
competing energy solutions in the theorem's prescribed class, then
$U-V\in C([0,T];L^2(\Omega;\mathbb R^m))$, and
\begin{displaymath}
\|U(t)-U(s)\|_{L^1(\Omega;\mathbb R^m)}
\le |\Omega|^{1/2}\|U(t)-U(s)\|_H.
\end{displaymath}
Thus their difference has the continuity required by the lemma without
being assumed to arise from any approximation. This is why no circular
use of the yet-to-be-proved $C([0,T];H)$ regularity occurs.
\end{remark}

\begin{remark}[Which regularization is needed for which entropy?]
The following comparison explains why the preceding proof was given
in full generality, even though certain convex entropies also admit
a direct argument. The spatial Dirichlet heat-semigroup proof of
Lemma~\ref{lem:chain} establishes the vector chain rule for every
$\Phi$ satisfying \eqref{eq:phi}, without any convexity assumption.
There are two further routes, with different scopes.

For convex $\Phi$, one can work with temporal differences and the
supporting-hyperplane inequalities
$D\Phi(a)\cdot(b-a)\leq\Phi(b)-\Phi(a)
\leq D\Phi(b)\cdot(b-a)$.
Bounded Sobolev approximation first extends the distributional
identity to tests in $V\cap L^\infty(\Omega;\mathbb R^m)$; Steklov averages then handle
separately the $L^2(0,T;V')$ and $L^1(Q_T)$ terms. The latter requires
convergence against an integrable weight, while $C([0,T];L^1(\Omega;\mathbb R^m))$   continuity
recovers all temporal endpoints. This argument applies to the convex
radial entropies $\Phi_\varepsilon$ and $\Psi_R$ used below.
Related scalar weak-entropy and temporal comparison methods occur in
\cite[Eq.~(14), p.~387; Section~4]{KnabnerOtto2000}; see also the
scalar sum-space chain rules in
\cite[Corollary~11, p.~145]{AlvarezVazquezJDE2008} and
\cite[Corollary~9, p.~143]{AlvarezVazquez2009}.

The general, possibly nonconvex, vector identity also follows by
temporal convolution, with the bounded-test argument and the two
different derivative spaces made explicit in
Remark~\ref{rem:temporal-convolution}. The proof by the Dirichlet
semigroup remains useful because it handles the two summands of
$w_t$ in their natural spatial spaces.
\end{remark}

\begin{corollary}[Diffusion and $L^1(Q_T;\mathbb R^m)$  sources]\label{cor:diffusion}
If in Lemma~\ref{lem:chain} $A=\nu\Delta_D w$ and
$B=F\in L^1((0,T)\times\Omega;\mathbb R^m)$, then
\begin{equation}\label{eq:entropy}
\begin{split}
\int_\Om\Phi(w(t))&+\nu\int_s^t\!\int_\Om
\sum_{j=1}^3 D^2\Phi(w)[\partial_j w,\partial_j w]\dd x\dd\tau\\
&=\int_\Om\Phi(w(s))+\int_s^t\!\int_\Om F\cdot D\Phi(w)\dd x\dd\tau.
\end{split}
\end{equation}
\end{corollary}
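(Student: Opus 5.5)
The plan is to apply Lemma~\ref{lem:chain} directly with $A=\nu\Delta_D w\in L^2(0,T;V')$ and $B=F\in L^1(Q_T;\mathbb R^m)$. The hypotheses of the lemma are then met, since $w\in L^2(0,T;V)$ gives $\Delta_D w\in L^2(0,T;V')$ by the form definition of the Dirichlet Laplacian, and the lemma yields \eqref{eq:sumchain}. The source term $\int_s^t\int_\Omega F\cdot D\Phi(w)$ already has the form appearing in \eqref{eq:entropy}. Only the dual pairing needs to be rewritten:
\begin{displaymath}
\langle\nu\Delta_D w(\tau),D\Phi(w(\tau))\rangle_{V',V}
=-\nu\int_\Omega\nabla w(\tau):\nabla[D\Phi(w(\tau))]\,\mathrm dx
\end{displaymath}
at almost every $\tau$. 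This identity holds because $D\Phi(w(\tau))\in V$.

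The main step is the composition identity $\nabla_x[D\Phi(w)]=D^2\Phi(w)\nabla_x w$ for $w(\tau)\in V$. The lemma promises that this identity is proved here by smooth approximation. I will take $f_j\in C_c^\infty(\Omega;\mathbb R^m)$ with $f_j\to w(\tau)$ in $V$ and almost everywhere. For smooth compactly supported $f_j$, the classical chain rule gives $\partial_k[D\Phi(f_j)]=D^2\Phi(f_j)\partial_kf_j$, and $D\Phi(f_j)$ has compact support because $D\Phi(0)=0$. Since $D\Phi$ is Lipschitz with constant $M$, we get $D\Phi(f_j)\to D\Phi(w)$ in $L^2$. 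For the gradients I split the difference as in Step~5 of the lemma, into
\begin{displaymath}
D^2\Phi(f_j)(\nabla f_j-\nabla w)+[D^2\Phi(f_j)-D^2\Phi(w)]\nabla w.
\end{displaymath}
The first term goes to zero by the Hessian bound. The second goes to zero by dominated convergence, using continuity of $D^2\Phi$, the almost-everywhere convergence, and the dominating function $4M^2|\nabla w|^2$. Hence $D\Phi(f_j)$ is Cauchy in $V$, and its limit lies in $H_0^1$ and equals $D\Phi(w)$. This proves both the formula and the trace preservation. Strong measurability in time of $\tau\mapsto D\Phi(w(\tau))\in V$ follows from the same continuity of the composition map on $V$.

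Substituting the composition formula gives the integrand
\begin{displaymath}
\nabla w:D^2\Phi(w)\nabla w=\sum_{j=1}^3\partial_jw\cdot D^2\Phi(w)\partial_jw=\sum_{j=1}^3 D^2\Phi(w)[\partial_j w,\partial_j w].
\end{displaymath}
This uses the symmetry of the Hessian, and the integrand is bounded by $M|\nabla w|^2\in L^1(Q_T)$. Moving the diffusion term to the left-hand side of \eqref{eq:sumchain} yields \eqref{eq:entropy} for every $0\le s\le t\le T$. I expect the only delicate point to be the composition lemma, specifically the identification of the limit in $H_0^1$ and the dominated convergence for the Hessian term. The rest is a bookkeeping substitution into Lemma~\ref{lem:chain}.
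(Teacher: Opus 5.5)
Your proposal is correct and follows the paper's proof. Both apply Lemma~\ref{lem:chain}, rewrite the $V'$--$V$ pairing through the form definition of $\Delta_D$, and justify $\nabla[D\Phi(w)]=D^2\Phi(w)\nabla w$ and the zero trace by smooth compactly supported approximation, splitting the gradient error into a bounded-Hessian term and a Hessian-difference term controlled by dominated convergence. The only difference is cosmetic: you approximate $w(\tau)$ in $V$ at each fixed $\tau$ and then add a measurability remark, whereas the paper approximates $w$ directly in $L^2(0,T;V)$ by space--time smooth functions.
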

\begin{proof} $ $\\
For $v\in V$, the Dirichlet Laplacian is the element of $V'$ defined by
\begin{displaymath}
\langle\Delta_Dv,\psi\rangle_{V',V}
=-\int_\Omega\nabla v:\nabla\psi\,\mathrm dx
\qquad(\psi\in V).
\end{displaymath}
Here the admissible test is $\psi=D\Phi(w)$. By $D\Phi(0)=0$ and
the uniform Hessian bound, Sobolev composition places this test in
$L^2(0,T;H_0^1(\Omega;\mathbb R^m))$ and gives
$\partial_j[D\Phi(w)]=D^2\Phi(w)\,\partial_jw$ in
$L^2(Q_T;\mathbb R^m)$ for $j=1,2,3$. Here is a direct
justification. Approximate $w$ in $L^2(0,T;V)$ by smooth
compactly supported space--time functions $w_k$. The classical
chain rule gives the identity for each $w_k$. The function values
$D\Phi(w_k)$ converge strongly in $L^2(Q_T;\mathbb R^m)$
by global Lipschitz continuity; for gradients, split
$D^2\Phi(w_k)\nabla w_k-D^2\Phi(w)\nabla w$ into a bounded
Hessian times $\nabla(w_k-w)$ and the Hessian difference times
$\nabla w$. The former tends to zero in $L^2$, and the latter
by the subsequence/dominated-convergence argument of Step~5.
Since $D\Phi(0)=0$, composition of zero-trace approximants
has zero trace, and closure in $V$ establishes the assertion.
Consequently,
\begin{displaymath}
\begin{aligned}
\langle\nu\Delta_Dw,D\Phi(w)\rangle_{V',V}
&=-\nu\int_\Omega\nabla w:\nabla[D\Phi(w)]\,\mathrm dx\\
&=-\nu\int_\Omega\sum_{j=1}^3
D^2\Phi(w)[\partial_jw,\partial_jw]\,\mathrm dx.
\end{aligned}
\end{displaymath}
The last integrand is absolutely integrable in space--time because its
absolute value is at most
$\|D^2\Phi\|_{L^\infty(\mathbb R^m;\mathcal L(\mathbb R^m,\mathbb R^m))}|\nabla w|^2$.
Substituting this identity and $B=F$ into \eqref{eq:sumchain}, and
moving the negative diffusion contribution to the left-hand side,
gives exactly \eqref{eq:entropy} for every $0\le s\le t\le T$.
\end{proof}

\section{Bounded coefficients and quantitative vectorial stability}\label{sec:bounded}

We now need a source of the time continuity required by the chain
rule. For bounded coefficients, the usual Hilbert-space theory gives
energy bounds without any dependence on the size of the skew
potential. More importantly, comparing solutions with different
bounded coefficients produces an $L^1(\Omega;\mathbb R^m)$  estimate controlled by the
difference of those coefficients. It is this {comparison}, rather
than compactness alone, that will select a single continuous limit
when the coefficient becomes singular.

In the bounded problem the product of coefficient and solution has
enough dual-space regularity for the familiar energy calculation.
After passage to the rough limit, only an $L^1$ product is available;
we will then return to the conditional entropy identity instead of
reusing the classical energy test without justification.

\begin{lemma}[Classical auxiliary problem]\label{lem:bounded}
If $D\in L^\infty(\QT;\mathbb R^{m\times m})$ is skew-symmetric a.e. and $U_0\in\HH$, the Dirichlet problem $v_t-\nu\Delta v+Dv=0$, $v(0)=U_0$, has a unique solution
\begin{displaymath}
v\in C([0,T];\HH)\cap L^2(0,T;\VV),\qquad v_t\in L^2(0,T;\VV'),
\end{displaymath}
with
\begin{equation}\label{eq:boundedenergy}
\frac12\|v(t)\|_{H}^2+\nu\int_s^t\|\nabla v\|_{L^2(\Omega;\mathbb R^{m\times3})}^2
=\frac12\|v(s)\|_{H}^2\qquad(0\le s\le t\le T).
\end{equation}
In particular,
\begin{equation}\label{eq:uniformgradient}
\|v\|_{L^\infty(0,T;\HH)}\le\|U_0\|_{H},
\qquad\|v\|_{L^2(0,T;L^6(\Om;\Rn))}\le
\frac{S_\Om}{\sqrt{2\nu}}\|U_0\|_{H}.
\end{equation}
\end{lemma}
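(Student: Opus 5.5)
We will obtain Lemma~\ref{lem:bounded} from Lions' theory for nonautonomous bounded forms, followed by an exact energy identity that uses the skew cancellation. Define for a.e.\ $t$ the bilinear form
\begin{displaymath}
\mathfrak b(t;u,\varphi)=\nu\int_\Omega\nabla u:\nabla\varphi\,\mathrm dx+\int_\Omega D(t)u\cdot\varphi\,\mathrm dx,\qquad u,\varphi\in V.
\end{displaymath}
The map $t\mapsto\mathfrak b(t;u,\varphi)$ is measurable because $D$ is measurable. The form is bounded, since
\begin{displaymath}
|\mathfrak b(t;u,\varphi)|\le(\nu+\|D\|_{L^\infty(Q_T)}C_P^2)\|u\|_V\|\varphi\|_V,
\end{displaymath}
with $C_P$ the Poincar\'e constant. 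Skew-symmetry gives $D(t,x)u\cdot u=0$ pointwise, so $\mathfrak b(t;u,u)=\nu\|u\|_V^2$ and the form is coercive with no shift. Lions' theorem \cite{Lions1961} then yields a unique
\begin{displaymath}
v\in L^2(0,T;V)\quad\text{with}\quad v_t\in L^2(0,T;V'),\qquad v_t+\mathcal B(t)v=0\ \text{in }L^2(0,T;V'),\qquad v(0)=U_0,
\end{displaymath}
where $\mathcal B(t)$ is the operator of the form. The Lions--Magenes embedding $W(0,T)\hookrightarrow C([0,T];H)$ \cite{LionsMagenes1972Vol1} gives the continuous representative, so the initial condition is meaningful.

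For the energy identity we use the Lions--Magenes chain rule
\begin{displaymath}
\frac{\mathrm d}{\mathrm dt}\|v\|_H^2=2\langle v_t,v\rangle_{V',V}\quad\text{for }v\in W(0,T).
\end{displaymath}
It is legitimate here because $Dv\in L^2(0,T;H)\subset L^2(0,T;V')$; this is exactly where boundedness of $D$ is used. Testing the equation with $v$ gives
\begin{displaymath}
\tfrac12\frac{\mathrm d}{\mathrm dt}\|v\|_H^2+\nu\|\nabla v\|_{L^2}^2+\int_\Omega Dv\cdot v=0,
\end{displaymath}
and the last integral vanishes a.e.\ in time by skew-symmetry. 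Integrating from $s$ to $t$ gives \eqref{eq:boundedenergy} for every pair of times, because $v\in C([0,T];H)$.

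Uniqueness follows by applying the same identity to the difference of two solutions, which has zero initial datum. We should also note that this solution is an energy--distributional solution in the sense of \eqref{eq:weak-definition}. Taking tests $\varphi\in C_c^\infty(Q_T)$ in the $V'$ equation gives this directly, and $Dv\in L^2(Q_T)\subset L^1(Q_T)$.

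For \eqref{eq:uniformgradient}, take $s=0$ in \eqref{eq:boundedenergy}: this gives $\|v(t)\|_H\le\|U_0\|_H$, and, with $t=T$, $\nu\|\nabla v\|_{L^2(Q_T)}^2\le\frac12\|U_0\|_H^2$. The Sobolev inequality of Remark~\ref{rem:integrability}, applied at a.e.\ time and squared-integrated, then yields
\begin{displaymath}
\|v\|_{L^2(0,T;L^6)}\le S_\Omega\|\nabla v\|_{L^2(Q_T)}\le S_\Omega(2\nu)^{-1/2}\|U_0\|_H.
\end{displaymath}

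The only point needing care is checking the hypotheses of Lions' theorem: measurability in time of the form, and the fact that coercivity holds without a shift and independently of $\|D\|_\infty$. This independence is what makes the bounds in \eqref{eq:uniformgradient} uniform in $D$, which the approximation scheme later requires. Nothing else is delicate here, since $Dv$ lies in the dual space.
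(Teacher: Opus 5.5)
Your proof is correct, but it obtains existence by a different route from the paper. The paper does not invoke Lions' form theorem for this lemma. It carries out the Galerkin construction explicitly:
\begin{itemize}
\item it projects onto Dirichlet eigenfunctions;
\item it solves the finite-dimensional system $d'=M_N(t)d$, with $M_N$ merely bounded and measurable, by Picard iteration of the Volterra equation;
\item it derives the energy bounds on $v_N$ from $v_N\cdot Dv_N=0$, so that they visibly do not involve $\|D\|_{L^\infty}$;
\item it passes to weak limits to get $\partial_t v=\nu\Delta_D v-Dv$ in $L^2(0,T;V')$.
\end{itemize}
From that point the two arguments coincide. The Lions--Magenes results give $v\in C([0,T];H)$ and $\frac12\frac{\mathrm d}{\mathrm dt}\|v\|_H^2=\langle v_t,v\rangle_{V',V}$. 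The skew term then drops out, and uniqueness and \eqref{eq:uniformgradient} follow from the exact identity.

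Your version is shorter and gets uniqueness in the class $v_t\in L^2(0,T;V')$ directly from the abstract theorem. It is also consistent with the paper, which itself applies \cite{Lions1961} to bounded skew forms elsewhere. The cost is that the time-measurable existence step is delegated to that theorem, whose proof is itself a Galerkin or projection-lemma argument. The paper instead makes this step self-contained and shows explicitly that no continuity of $D$ in time is needed.

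One small correction to your closing comment: the $D$-independence of \eqref{eq:uniformgradient} does not come from the $D$-independent coercivity constant. It comes from the exact energy identity for $v$ itself, which is how your own computation, and the paper's Step~4, actually conclude.
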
 
\begin{proof} $ $\\
We include the classical Galerkin argument for completeness, in order
to make clear which estimates are independent of
$\|D\|_{L^\infty((0,T)\times\Omega)}$.

\medskip\noindent{Step 1: Finite-dimensional problem.}
Let $(e_k)$ be an $H$-orthonormal basis of Dirichlet eigenvectors of
$L=-\Delta_D$; its span is dense in $V$ and in $H$. Write
$v_N(t)=\sum_{k=1}^N d_k(t)e_k$, and set
$v_N(0)=P_NU_0$, with $P_N$ the corresponding spectral projection.
For every $1\le j\le N$, solve
\begin{displaymath}
(\partial_tv_N,e_j)_H+\nu(\nabla v_N,\nabla e_j)_{L^2(\Omega)}
+(D(t)v_N,e_j)_H=0.
\end{displaymath}
The resulting coefficient matrix $M_N(t)$ is measurable and bounded
on $(0,T)$. The finite-dimensional system
$d'(t)=M_N(t)d(t)$ is equivalent to the Volterra equation
$d(t)=d(0)+\int_0^tM_N(s)d(s)\,\mathrm ds$.
Its successive-approximation series is absolutely and uniformly
convergent on $[0,T]$: the $k$th iterated integral has norm bounded
by $\|d(0)\|(T\|M_N\|_\infty)^k/k!$.
The resulting solution is absolutely continuous, and uniqueness
follows from the integral inequality and Gronwall's lemma. Thus
no continuity of $D$ in time is required. The comparable Galerkin
construction for measurable rough coefficients is carried out in
\cite[Lemma~11, proof, pp.~144--146]{AlvarezVazquez2009}; our ODE
argument is given here in full.

\medskip\noindent{Step 2: Uniform energy bounds.}
Multiply each equation by $d_j(t)$, sum over $j$, and use
$v_N\cdot Dv_N=0$ almost everywhere, because $D^\top=-D$. Then
\begin{displaymath}
\frac12\frac{\mathrm d}{\mathrm dt}\|v_N(t)\|_H^2
+\nu\|\nabla v_N(t)\|_{L^2(\Omega;\mathbb R^{m\times3})}^2=0.
\end{displaymath}
Integrating from $s$ to $t$ and using $\|P_NU_0\|_H\le\|U_0\|_H$
gives
\begin{displaymath}
\sup_{0\le t\le T}\|v_N(t)\|_H\le\|U_0\|_H,
\qquad
\|\nabla v_N\|_{L^2((0,T)\times\Omega;\mathbb R^{m\times3})}
\le\frac{\|U_0\|_H}{\sqrt{2\nu}}.
\end{displaymath}
By the Sobolev inequality on $V$, applied at almost every time,
\begin{displaymath}
\|v_N\|_{L^2(0,T;L^6(\Omega;\mathbb R^m))}
\le S_\Omega\|\nabla v_N\|_{L^2((0,T)\times\Omega;\mathbb R^{m\times3})}
\le \frac{S_\Omega}{\sqrt{2\nu}}\|U_0\|_H.
\end{displaymath}
Notice that none of these constants involves $\|D\|_\infty$.

\medskip\noindent{Step 3: Weak limits and the initial condition.}
The preceding estimates yield, upon extraction,
$v_N\rightharpoonup v$ in $L^2(0,T;V)$ and
$v_N\rightharpoonup^*v$ in $L^\infty(0,T;H)$.
For fixed bounded $D$, the maps $v\mapsto Dv$ are bounded on
$L^2((0,T)\times\Omega;\mathbb R^m)$, so
$Dv_N\rightharpoonup Dv$ in this space. Passing to the Galerkin
identities first against finite linear combinations of eigenvectors
and then by density produces
$\partial_tv=\nu\Delta_Dv-Dv$ in $L^2(0,T;V')$.
The same integrated Galerkin identity contains the initial term
$(P_NU_0,\psi)_H$; letting $N\to\infty$ gives $(U_0,\psi)_H$ for all
$\psi\in V$. Hence the $V'$-continuous representative of $v$ has
initial value $U_0$, and the Hilbert-triple theorem below identifies
this equality in $H$.

\medskip\noindent{Step 4: Strong traces, energy and uniqueness.}
The Hilbert-triple time-continuity result
\cite[Vol.~I, Ch.~1, Proposition~2.1, p.~18, and Theorem~3.1,
p.~19]{LionsMagenes1972Vol1} states that
$u\in L^2(0,T;V)$ and $u'\in L^2(0,T;V')$ imply
$u\in C([0,T];H)$. The accompanying identity
$\frac12\frac{\mathrm d}{\mathrm dt}\|u(t)\|_H^2
=\langle u'(t),u(t)\rangle_{V',V}$ almost everywhere
follows by the Hilbert-space product rule and density in the evolution
space \cite[Vol.~I, Ch.~1, Theorem~2.1, p.~11]{LionsMagenes1972Vol1}.
Applying these facts to $v\in L^2(0,T;V)$ and
$\partial_tv\in L^2(0,T;V')$ gives
$v\in C([0,T];H)$ and
\begin{displaymath}
\frac12\frac{\mathrm d}{\mathrm dt}\|v(t)\|_H^2
=\langle\partial_tv(t),v(t)\rangle_{V',V}
\quad\text{for a.e. }t.
\end{displaymath}
This theorem is applicable {here} because $Dv\in L^2(0,T;H)$:
for bounded $D$ there is no problem of the rough-product kind present
in the main theorem. Substituting the equation, the skew term vanishes and yields
\eqref{eq:boundedenergy} at all endpoints. The bounds in
\eqref{eq:uniformgradient} follow as follows. Weak lower
semicontinuity gives
$\|\nabla v\|_{L^2(Q_T;\mathbb R^{m\times3})}
\le\liminf_N\|\nabla v_N\|_{L^2(Q_T;\mathbb R^{m\times3})}$;
the trace identity, now justified for $v$, yields the exact
gradient bound from $\frac12\|U_0\|_H^2$.
Sobolev--Poincar\'e then gives the stated $L^2(0,T;L^6)$ bound.
The initial trace is recovered by testing the integrated Galerkin
formula with $\eta(t)\psi(x)$ where $\eta(T)=0$ and
$\eta(0)\ne0$, passing to the limit using
$P_NU_0\to U_0$ in $H$, and matching the boundary term at $0$.
Applying the identity to
the difference of two bounded-coefficient solutions with equal initial
data yields uniqueness. The construction and proof are complete.
\end{proof}

\begin{remark}[The classical form-bounded contribution to domination]
Before deriving quantitative stability, it is useful to separate
what follows from an existing invariance theorem from the new
singular-coefficient arguments. In the uniformly form-bounded regime, the scalar Dirichlet heat
semigroup dominates the {Euclidean modulus} of the vectorial
evolution. Proposition~\ref{prop:product-cone} checks the product-cone
conditions in the published nonautonomous convex-invariance criterion
\cite[Theorem~2.2]{ArendtDierOuhabaz2014}; the calculation is a
specific application of that theorem, not a new general invariance
principle. Autonomous precursors include
\cite[Theorem~4 and Proposition~7]{Ouhabaz1999} and
\cite{Shigekawa1997}. Uniform form-boundedness is not guaranteed for
the full singular mixed region.
\end{remark}

The next proposition makes the classical comparison explicit. It
verifies the hypotheses of the nonautonomous product-cone criterion
in a bounded-form subcase; it is not being used as a substitute for
the later singular-range entropy argument.

\begin{proposition}[Bounded-form heat domination from convex invariance]
\label{prop:product-cone}
Suppose $C\in L^\infty(0,T;L^{3/2}(\Omega;\mathbb R^{m\times m}))$
is real and skew-symmetric. Let $U_C(t,s):H\to H$ denote the evolution family obtained from
the nonautonomous form theorem \cite[Chapter~IV, \S1,
Theorem~1.1]{Lions1961}: for $f\in H$, $U_C(\cdot,s)f$
is the unique solution on $[s,T]$ with initial value $f$ at $s$
for $\partial_tu-\nu\Delta_Du+Cu=0$ with zero Dirichlet
boundary data. Then for every $f\in H$ and $0\le s\le t\le T$,
\begin{equation}\label{eq:ado-cone-domination}
  |U_C(t,s)f|\le S(t-s)|f|\quad\text{a.e. on }\Omega,
  \qquad \|U_C(t,s)f\|_{L^1}\le\|f\|_{L^1},
\end{equation}
where $S$ is the scalar Dirichlet heat semigroup of diffusivity $\nu$.
\end{proposition}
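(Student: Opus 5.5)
The plan is to realize the domination as invariance of a closed convex set for a coupled evolution on the product Hilbert space $\mathcal H=H\times L^2(\Omega)$, and then verify the hypotheses of \cite[Theorem~2.2]{ArendtDierOuhabaz2014}.

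\textbf{The coupled form.} On $\mathcal V=V\times H_0^1(\Omega)$ define, for a.e.\ $t$,
\begin{displaymath}
\mathfrak b(t)\bigl((u,\rho),(v,\sigma)\bigr)
=\nu\int_\Omega\nabla u:\nabla v+\int_\Omega C(t)u\cdot v
+\nu\int_\Omega\nabla\rho\cdot\nabla\sigma .
\end{displaymath}
Its evolution family is $U_C(t,s)\times S(t-s)$.

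\textbf{Step 1: admissibility of the form.}
\begin{itemize}[leftmargin=*]
\item Measurability in $t$ is inherited from $C$.
\item Uniform boundedness follows from H\"older with exponents $3/2,6,6$ and the Sobolev inequality of Remark~\ref{rem:integrability}:
\begin{displaymath}
\Bigl|\int_\Omega C(t)u\cdot v\Bigr|\le S_\Omega^2\|C\|_{L^\infty(0,T;L^{3/2})}\|\nabla u\|\,\|\nabla v\|.
\end{displaymath}
\item Coercivity is exact, because skew-symmetry kills the potential on the diagonal: $\mathfrak b(t)(w,w)=\nu\|\nabla w\|^2$.
\end{itemize}
So Lions' theorem \cite{Lions1961} applies. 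In particular $U_C(t,s)$ is well defined, and the coupled solution is unique.

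\textbf{Step 2: the convex set and its projection.} Take the closed convex cone
\begin{displaymath}
K=\{(u,\rho)\in\mathcal H:\ |u|\le\rho\ \text{a.e.}\},
\end{displaymath}
so that $(f,|f|)\in K$. Its $\mathcal H$-projection $P$ acts pointwise as the Euclidean projection $\pi$ onto the second-order cone in $\mathbb R^{m}\times\mathbb R$:
\begin{itemize}[leftmargin=*]
\item $\pi(u,\rho)=(u,\rho)$ if $|u|\le\rho$;
\item $\pi(u,\rho)=0$ if $|u|\le-\rho$;
\item otherwise $\pi(u,\rho)=\lambda\,(u/|u|,1)$ with $\lambda=(|u|+\rho)/2$.
\end{itemize}
Since $\pi$ is $1$-Lipschitz with $\pi(0)=0$, Sobolev composition (as in Corollary~\ref{cor:diffusion}) gives $P\mathcal V\subset\mathcal V$.

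\textbf{Step 3: the ADO criterion.} It remains to show $\mathfrak b(t)(Pw,w-Pw)\ge0$ for $w\in\mathcal V$. I split this into the potential term and the diffusion term.

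The potential term vanishes pointwise. On the active region the $u$-components of $Pw$ and $w-Pw$ are $\lambda u/|u|$ and $(1-\lambda/|u|)u$, which are parallel, so $C\xi\cdot\eta=0$ there by skew-symmetry. On the other two regions one of the two factors is zero.

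The diffusion term is $\nu\int_\Omega\nabla(\pi\circ w):\nabla(w-\pi\circ w)$, with the same diffusivity in both components. Its nonnegativity is the main obstacle, because $\pi$ is only Lipschitz.
\begin{itemize}[leftmargin=*]
\item First route: on each of the three regions compute $D\pi$ explicitly. On the interior region it is the identity and on the polar region it is zero, so the integrand vanishes. On the active region $D\pi(w)$ is a symmetric matrix with eigenvalues in $[0,1]$, since it is the derivative of the projection onto a closed convex set. Hence, with $\xi=\partial_jw$,
\begin{displaymath}
D\pi\,\xi\cdot(I-D\pi)\xi\ge0 .
\end{displaymath}
The cone boundaries between regions carry no gradient contribution by the standard Stampacchia argument (level sets of Lipschitz functions).
\item Fallback: appeal to the known fact that a pointwise projection onto a closed convex subset of $\mathbb R^{m+1}$ satisfies the Ouhabaz invariance condition for the Dirichlet Laplacian form \cite{Ouhabaz1999}.
\end{itemize}

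\textbf{Step 4: conclusion.} Invariance gives $(U_C(t,s)f,\,S(t-s)|f|)\in K$, which is the pointwise bound in \eqref{eq:ado-cone-domination}. Integrating over $\Omega$ and using the $L^1$ contractivity of the scalar Dirichlet semigroup from Lemma~\ref{lem:heat-smoothing} yields $\|U_C(t,s)f\|_{L^1}\le\|f\|_{L^1}$.
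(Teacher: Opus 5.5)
Your proposal is correct and matches the paper's proof essentially step for step: the same decoupled block form on $H\times L^2(\Omega)$, bounded by H\"older--Sobolev and exactly coercive by skew-symmetry; the same pointwise second-order-cone projection; collinearity to kill the potential term; \cite[Theorem~2.2]{ArendtDierOuhabaz2014} applied from $(f,|f|)$ at time $s$; and finally scalar $L^1$ contractivity. The only difference is how you justify the diffusion inequality where $\pi$ is not differentiable: the paper applies firm nonexpansiveness to spatial difference quotients and passes to the limit, whereas your explicit-Jacobian route is also valid once the boundary step is phrased as follows. One has $\pi\circ w=w$ on the closed set $\{w\in K\}$ and $\pi\circ w=0$ on the closed polar set, so by Stampacchia the relevant gradients vanish a.e.\ on these sets, and only the open active region, where $D\pi$ exists, remains.
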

\begin{proof}
Use the product Hilbert triple
$\widehat H=H\times L^2(\Omega)$ and
$\widehat V=V\times H_0^1(\Omega)$. Define the block form
\begin{displaymath}
 \widehat a_\tau((u,z),(v,h))=
 \nu\!\int_\Omega\nabla u:\nabla v+
 \nu\!\int_\Omega\nabla z\cdot\nabla h+
 \int_\Omega(C(\tau)u)\cdot v.
\end{displaymath}
H\"older and $H_0^1(\Omega;\mathbb R^m)\hookrightarrow L^6(\Omega;\mathbb R^m)$   make this form uniformly
bounded on $\widehat V\times\widehat V$, while skew symmetry
preserves coercivity. Consider the closed convex product cone
\begin{displaymath}
 \mathcal K=\{(u,z)\in\widehat H:|u(x)|\le z(x)
   \text{ for almost every }x\in\Omega\}.
\end{displaymath}
The orthogonal projection $P$ is the pointwise projection onto the
second-order cone in $\mathbb R^{m+1}$. Writing $\rho=|u|$, it is
\begin{displaymath}
 P(u,z)=\begin{cases}
 (u,z),&z\ge\rho,\\
 (0,0),&z\le-\rho,\\
 ((\rho+z)u/(2\rho),(\rho+z)/2),&-\rho<z<\rho,
 \end{cases}
\end{displaymath}
with the usual continuous interpretation when $\rho=0$.
A metric projection onto a closed convex set is nonexpansive and
fixes the origin here, so $P(\widehat V)\subset\widehat V$ by
Sobolev composition. Moreover, firm nonexpansiveness of the
finite-dimensional projection gives, for every spatial direction $j$,
\begin{displaymath}
 \partial_j(PX)\cdot\partial_j(X-PX)\ge0
 \quad\text{a.e.},\qquad X=(u,z)\in\widehat V.
\end{displaymath}
To justify this assertion at nondifferentiability points of $P$,
apply $(P(a)-P(b))\cdot((a-Pa)-(b-Pb))\ge0$ to spatial difference
quotients and pass to their strong local $L^2_{\mathrm{loc}}(\Omega;\mathbb R^{m+1})$   limits. If
$PX=(p,p_0)$, then $p$ and $u-p$ are collinear almost everywhere,
so $(Cp)\cdot(u-p)=0$. Consequently
$\widehat a_\tau(PX,X-PX)\ge0$ for almost every $\tau$.
The invariance theorem
\cite[Theorem~2.2]{ArendtDierOuhabaz2014} implies that a
product evolution starting in $\mathcal K$ stays there. Start at
time $s$ from $(f,|f|)\in\widehat H$, whose scalar component
solves the heat equation, and obtain the first inequality in
\eqref{eq:ado-cone-domination}; the scalar heat $L^1(\Omega)$  contraction
gives the second. This deduction uses the previously published
nonautonomous convex-invariance theorem and should not be
presented as a new bounded-coefficient contraction principle.
\end{proof}

\noindent This classical domination also explains the $L^1(\Omega;\mathbb R^m)$  
contraction behind the next quantitative stability estimate. The latter
adds a forcing term generated by changing the coefficient; its proof
retains the complete entropy calculation rather than relying on an
unverified passage from an autonomous semigroup result.

\begin{lemma}[Stability in the vector $L^1(\Omega;\mathbb R^m)$  norm]\label{lem:stability}
Let $D,E\in L^\infty((0,T)\times\Omega;\mathbb R^{m\times m})$
be skew-symmetric almost everywhere, and let $v,z$ denote the unique
solutions of Lemma~\ref{lem:bounded} for the respective initial data
$v_0,z_0\in H$. Then for every $0\le s\le t\le T$, 
\begin{equation}\label{eq:stability}
\|v(t)-z(t)\|_{L^1(\Omega;\Rn)}\le\|v(s)-z(s)\|_{L^1(\Omega;\Rn)}+
\int_s^t\!\int_\Om |D-E|\,|z|\dd x\dd\tau.
\end{equation}
In particular, for equal initial data $U_0$,
\begin{equation}\label{eq:cauchyL1}
\sup_{t\in[0,T]}\|v(t)-z(t)\|_{L^1(\Omega;\Rn)}\le
\frac{S_\Om}{\sqrt{2\nu}}\|U_0\|_{H}
\|D-E\|_{L^2(0,T;L^{6/5}(\Om;\mathbb R^{m\times m}))}.
\end{equation}
\end{lemma}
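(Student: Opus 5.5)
We will apply Corollary~\ref{cor:diffusion} to the difference $w=v-z$ with a smooth convex radial approximation of the Euclidean norm. Both $v$ and $z$ lie in $C([0,T];H)\cap L^2(0,T;V)$, so $w\in C([0,T];L^1(\Omega;\mathbb R^m))\cap L^2(0,T;V)$. Subtracting the two equations gives
\[
\partial_t w=\nu\Delta_D w - Dw + (E-D)z .
\]
All coefficients are bounded, so these terms lie in $L^2(0,T;V')$ and a fortiori the last two lie in $L^1(Q_T)$. We take $F=-Dw+(E-D)z\in L^1(Q_T;\mathbb R^m)$.

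For the entropy we use $\Phi_\varepsilon(\xi)=\sqrt{|\xi|^2+\varepsilon^2}-\varepsilon$. It satisfies \eqref{eq:phi}: $\Phi_\varepsilon(0)=0$, and
\[
D\Phi_\varepsilon(\xi)=\frac{\xi}{\sqrt{|\xi|^2+\varepsilon^2}},
\]
so $D\Phi_\varepsilon(0)=0$, $|D\Phi_\varepsilon|\le 1$, and $D^2\Phi_\varepsilon$ is bounded by $1/\varepsilon$, positive semidefinite and continuous. Corollary~\ref{cor:diffusion} then gives, for every $0\le s\le t\le T$,
\[
\int_\Omega\Phi_\varepsilon(w(t))+\nu\int_s^t\!\int_\Omega \sum_j D^2\Phi_\varepsilon(w)[\partial_jw,\partial_jw]
=\int_\Omega\Phi_\varepsilon(w(s))+\int_s^t\!\int_\Omega F\cdot D\Phi_\varepsilon(w).
\]
We now treat the terms one by one.
\begin{itemize}
\item The diffusion term is nonnegative by convexity, so we drop it.
\item The skew part vanishes pointwise: $-Dw\cdot w/\sqrt{|w|^2+\varepsilon^2}=0$ because $w\cdot Dw=0$ a.e. This radial cancellation is exactly why a radial entropy is chosen.
\item The forcing part is bounded by $|(E-D)z|\,|D\Phi_\varepsilon(w)|\le |D-E|\,|z|$, using $|Mz|\le|M|_{\mathrm F}|z|$ and $|D\Phi_\varepsilon|\le1$. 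Here $|D-E|$ is read as the pointwise Frobenius norm.
\end{itemize}
Letting $\varepsilon\downarrow0$, we have $\Phi_\varepsilon(\xi)\uparrow|\xi|$, with $0\le |\xi|-\Phi_\varepsilon(\xi)\le\varepsilon$. On the finite-measure domain the endpoint integrals therefore converge to $\|w(t)\|_{L^1}$ and $\|w(s)\|_{L^1}$, which yields \eqref{eq:stability}.

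For \eqref{eq:cauchyL1}, set $s=0$, where $w(0)=0$, and bound the forcing integral by Hölder. In space we use exponents $6/5$ and $6$, then Cauchy--Schwarz in time:
\[
\int_0^t\!\int_\Omega|D-E|\,|z|\le \|D-E\|_{L^2(0,T;L^{6/5})}\,\|z\|_{L^2(0,T;L^6)} .
\]
The uniform bound \eqref{eq:uniformgradient} of Lemma~\ref{lem:bounded}, whose constant does not depend on $\|E\|_\infty$, gives $\|z\|_{L^2(0,T;L^6)}\le S_\Omega(2\nu)^{-1/2}\|U_0\|_H$. Taking the supremum over $t$ finishes the estimate.

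The only step needing care is the legitimacy of applying Corollary~\ref{cor:diffusion}, namely the membership $F\in L^1(Q_T)$ and the $C([0,T];L^1)$ continuity of $w$. Both are immediate here from bounded coefficients and Lemma~\ref{lem:bounded}. The remaining delicate point is only verifying the Hessian bound and the sign for $\Phi_\varepsilon$, which is a direct computation:
\[
D^2\Phi_\varepsilon(\xi)=\frac{I}{\sqrt{|\xi|^2+\varepsilon^2}}-\frac{\xi\otimes\xi}{(|\xi|^2+\varepsilon^2)^{3/2}} .
\]
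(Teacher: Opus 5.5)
Your proposal is correct and follows the paper's proof essentially step by step: the same difference equation with source $F=-Dw+(E-D)z$, the same regularized modulus $\Phi_\varepsilon$ inserted into Corollary~\ref{cor:diffusion}, the same pointwise skew cancellation, the same bound of the forcing pairing by $|D-E|_{\mathrm F}|z|$, and the same mixed H\"older--Sobolev estimate via \eqref{eq:uniformgradient}; the only variation is that your uniform bound $0\le|\xi|-\Phi_\varepsilon(\xi)\le\varepsilon$ replaces the paper's dominated-convergence passage at the endpoints. One small wording fix: $Dw$ and $(E-D)z$ belong to $L^1(Q_T)$ because bounded coefficients times $L^2(Q_T)$ functions lie in $L^2(Q_T)\subset L^1(Q_T)$, not ``a fortiori'' from membership in $L^2(0,T;V')$.
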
 
\begin{proof} $ $\\
Write $w=v-z$ and $G=(E-D)z$. Then $w_t=\nu\Delta_D w-Dw+G$ and $G\in L^1(Q_T;\mathbb R^m)$ because $D-E$ is bounded and
$z\in L^2(Q_T;\mathbb R^m)$ on a finite cylinder. Also
$w\in C([0,T];H)$ and, for any $a,b\in H$,
$\|a-b\|_{L^1(\Omega;\mathbb R^m)}
\le|\Omega|^{1/2}\|a-b\|_H$ by Cauchy--Schwarz.
Consequently $w\in C([0,T];L^1(\Omega;\mathbb R^m))$  and $w\in L^2(0,T;\VV)$, so Corollary \ref{cor:diffusion} applies to
\begin{displaymath}
\Phi_\varepsilon(\xi)=\sqrt{|\xi|^2+\varepsilon^2}-\varepsilon,
\quad D\Phi_\varepsilon(\xi)=\frac\xi{\sqrt{|\xi|^2+\varepsilon^2}},
\quad
D^2\Phi_\varepsilon(\xi)=\frac I{\sqrt{|\xi|^2+\varepsilon^2}}-
\frac{\xi\otimes\xi}{(|\xi|^2+\varepsilon^2)^{3/2}}.
\end{displaymath}

This regularized Euclidean modulus is chosen because it converges to
$|\xi|$ while its gradient remains bounded by one; unlike the
nondifferentiable modulus it satisfies the hypotheses of
Corollary~\ref{cor:diffusion}. The earlier scalar truncated-modulus
method appears, with a different entropy, in
\cite[Lemma~9 and the uniqueness proof, pp.~140--148]{AlvarezVazquezJDE2008}
and \cite[Corollary~9, pp.~143--144]{AlvarezVazquez2009}; neither
source is being credited with our particular vector formula.
For $\xi\ne0$, the Hessian has $(m-1)$ tangential eigenvalues
$(|\xi|^2+\varepsilon^2)^{-1/2}$ and the radial eigenvalue
$\varepsilon^2(|\xi|^2+\varepsilon^2)^{-3/2}$; at $\xi=0$
all eigenvalues equal $1/\varepsilon$. Hence it is positive
semidefinite and $D\Phi_\varepsilon(0)=0$.

Apply Corollary~\ref{cor:diffusion} to $w$, with the source
$F=-Dw+G\in L^1(Q_T;\mathbb R^m)$. For every $0\le s\le t\le T$,
\begin{displaymath}
\begin{aligned}
\int_\Omega\Phi_\varepsilon(w(t))\,\mathrm dx
&+\nu\int_s^t\!\int_\Omega
 \sum_{j=1}^3 D^2\Phi_\varepsilon(w)[\partial_jw,\partial_jw]
 \,\mathrm dx\,\mathrm d\tau\\
&=\int_\Omega\Phi_\varepsilon(w(s))\,\mathrm dx
 +\int_s^t\!\int_\Omega[-Dw+G]\cdot D\Phi_\varepsilon(w)
 \,\mathrm dx\,\mathrm d\tau.
\end{aligned}
\end{displaymath}
Because $D^\top=-D$ and $D\Phi_\varepsilon(w)$ is parallel to
$w$, one has $(Dw)\cdot D\Phi_\varepsilon(w)=0$ almost everywhere.
The diffusion term is nonnegative, while
$|G\cdot D\Phi_\varepsilon(w)|\le |G|$.
Therefore
$\int_\Omega\Phi_\varepsilon(w(t))\le
\int_\Omega\Phi_\varepsilon(w(s))+
\int_s^t\int_\Omega|D-E|_{\mathrm F}|z|$.
At fixed $s,t$, the bounds $0\le\Phi_\varepsilon(\xi)\le|\xi|$
and convergence $\Phi_\varepsilon(\xi)\to|\xi|$ permit
Lebesgue dominated convergence using $w(s),w(t)\in L^1(\Omega;\mathbb R^m)$;
this proves \eqref{eq:stability}.
For equal initial data take $s=0$. Mixed H\"older and Sobolev--Poincar\'e yield
\begin{displaymath}
\begin{aligned}
\int_{Q_T}|D-E|_{\mathrm F}|z|
&\le\|D-E\|_{L^2(0,T;L^{6/5}(\Omega;\mathbb R^{m\times m}))}
\|z\|_{L^2(0,T;L^6(\Omega;\mathbb R^m))}\\
&\le \frac{S_\Omega}{\sqrt{2\nu}}\|U_0\|_H
\|D-E\|_{L^2(0,T;L^{6/5}(\Omega;\mathbb R^{m\times m}))},
\end{aligned}
\end{displaymath}
which is exactly \eqref{eq:cauchyL1}.
\end{proof}

\section{Construction of the singular-coefficient solution}\label{sec:existence}

The approximate equations provide uniform energy bounds, but a weak
limit by itself would leave two questions open: whether the singular
product has the expected value, and whether the candidate solution
has a trace at every time. The coefficient-comparison estimate
answers the second question first, producing a limit in
$C([0,T];L^1(\Omega;\mathbb R^m))$. We then identify the product,
pass to the distributional equation, and only afterward apply the
conditional chain rule. This ordering is what turns the formal
cancellation into an exact identity for the limiting solution.
\begin{proof}[Existence part of Theorem~\ref{thm:main}] $ $\\
We divide the proof into five steps. The first produces a {single}
trajectory in $C([0,T];L^1(\Omega;\mathbb R^m))$ from bounded skew approximations. The second
identifies the singular product in the strong
$L^1((0,T)\times\Omega;\mathbb R^m)$ topology, while the
third establishes the limiting equation. Only after these three steps
can the conditional chain rule be used. It then gives the exact energy
identity (Step~4), from which the stronger topology of $C([0,T];L^2(\Omega;\mathbb R^m))$ follows
(Step~5).

\medskip\noindent{Step 1: Bounded skew approximations, energy bounds, and the $L^1$ limit.}
Set $C_n=C\mathbf1_{\{|C|_{\mathrm F}\le n\}}$. Because the indicator
is scalar, $C_n^\top=-C_n$ and
$\|C_n\|_{L^\infty((0,T)\times\Omega;\mathbb R^{m\times m})}\le n$.
For almost every $t$, the scalar indicator
$\mathbf1_{\{|C(t,x)|_{\mathrm F}\le n\}}$ converges to one
for almost every $x$. Moreover
$$|C_n(t,x)-C(t,x)|_{\mathrm F}^{6/5}
\le |C(t,x)|_{\mathrm F}^{6/5}.$$
Dominated convergence in space therefore yields
$\|C_n(t)-C(t)\|_{L^{6/5}(\Omega;\mathbb R^{m\times m})}\to0$
for almost every $t$. The scalar squared function
$t\mapsto\|C_n(t)-C(t)\|_{L^{6/5}(\Omega;\mathbb R^{m\times m})}^2$
is bounded by the integrable majorant
$t\mapsto\|C(t)\|_{L^{6/5}(\Omega;\mathbb R^{m\times m})}^2$,
since $C_n-C=-C\mathbf1_{\{|C|_{\mathrm F}>n\}}$.
Dominated convergence in time now gives
\begin{displaymath}
C_n\longrightarrow C\qquad\text{in }
L^2(0,T;L^{6/5}(\Omega;\mathbb R^{m\times m})).
\end{displaymath}

The truncation is merely one particular admissible sequence: the
later stability estimate also applies to any sequence of bounded,
skew-symmetric matrix fields converging in the {norm}
$L^2(0,T;L^{6/5}(\Omega;\mathbb R^{m\times m}))$.
The proof of convergence of the particular truncations above is
what ensures such sequences exist. Let $U_n$ be the
solutions given by Lemma~\ref{lem:bounded}, all with the same initial
datum $U_0$. Their exact energy balances yield
\begin{equation}\label{eq:uniform}
\sup_n\left(\|U_n\|_{L^\infty(0,T;H)}
+\|U_n\|_{L^2(0,T;V)}\right)<\infty.
\end{equation}
For a fixed $n,m$, Lemma~\ref{lem:stability} estimates their difference,
with constant $S_\Omega\|U_0\|_H/\sqrt{2\nu}$, which
does not contain either
$\|C_n\|_{L^\infty(Q_T;\mathbb R^{m\times m})}$ or
$\|C_m\|_{L^\infty(Q_T;\mathbb R^{m\times m})}$.
This independence is essential because each truncation is bounded
but the sequence need not be uniformly bounded. Since the coefficients are Cauchy in the Bochner
norm, the approximating solutions are Cauchy in the complete space
$C([0,T];L^1(\Omega;\mathbb R^m))$. Denote their limit by $U$:
\begin{equation}\label{eq:cl1limit}
U_n\longrightarrow U\quad\text{in }C([0,T];L^1(\Omega;\mathbb R^m)),
\qquad U(0)=U_0\quad\text{in }L^1(\Omega;\mathbb R^m).
\end{equation}
This is the continuity needed later by Lemma~\ref{lem:chain}; we have
proved it without using that lemma on the singular limiting equation.

Weak compactness from \eqref{eq:uniform} provides a subsequence weakly
convergent in $L^2(0,T;V)$ and weak-star convergent in
$L^\infty(0,T;H)$. By \eqref{eq:cl1limit}, both distributional limits
are $U$, so $U$ belongs to both spaces, with
$\|U\|_{L^\infty(0,T;H)}\le\|U_0\|_H$.
This essential-in-time bound is not yet a pointwise trace assertion.
For {each fixed} $t\in[0,T]$, however, $U_n(t)\to U(t)$ in
$L^1(\Omega;\mathbb R^m)$ and
$\|U_n(t)\|_H\le\|U_0\|_H$. Fix $t\in[0,T]$. Since $U_n(t)\to U(t)$ strongly in
$L^1(\Omega;\mathbb R^m)$, some subsequence $U_{n_j}(t,x)$
converges to $U(t,x)$ for almost every $x$. Applying Fatou's lemma
to $|U_{n_j}(t,x)|^2$ and using the approximate energy bounds,
\begin{displaymath}
\int_\Omega|U(t,x)|^2\,\mathrm dx
\le\liminf_{j\to\infty}\int_\Omega|U_{n_j}(t,x)|^2\,\mathrm dx
\le\|U_0\|_H^2.
\end{displaymath}
Thus
\begin{displaymath}
U(t)\in H,\qquad \|U(t)\|_H\le\|U_0\|_H
\qquad\text{for every }t\in[0,T].
\end{displaymath}
The spatial subsequence is allowed to depend on $t$: the path $U$ is
already fixed by the uniform convergence in $C([0,T];L^1(\Omega;\mathbb R^m))$. In particular, the
limiting entropy will have a value in $L^2(\Omega;\mathbb R^m)$ at {every} endpoint.

\medskip\noindent{Step 2: Strong identification of $C_nU_n$.}
The zero-order term must converge as an ordinary integrable function,
not merely as a distribution. Split
\begin{equation}\label{eq:split}
\begin{aligned}
\|C_nU_n-CU\|_{L^1((0,T)\times\Omega;\mathbb R^m)}
&\le\|(C_n-C)U_n\|_{L^1((0,T)\times\Omega;\mathbb R^m)}\\
&\quad+\|C(U_n-U)\|_{L^1((0,T)\times\Omega;\mathbb R^m)}.
\end{aligned}
\end{equation}
For almost every $t$, Sobolev--Poincar\'e in dimension three gives
\begin{displaymath}
\|U_n(t)\|_{L^6(\Omega;\mathbb R^m)}
\le S_\Omega\|\nabla U_n(t)\|_{L^2(\Omega;\mathbb R^{m\times3})}.
\end{displaymath}
Squaring, integrating in $t$, and taking the square root yields
\begin{displaymath}
\|U_n\|_{L^2(0,T;L^6(\Omega;\mathbb R^m))}
\le \frac{S_\Omega}{\sqrt{2\nu}}\|U_0\|_H.
\end{displaymath}
Mixed H\"older, with spatial exponents $6/5$ and $6$ and temporal
exponents $2$ and $2$, therefore bounds the first summand in
\eqref{eq:split} by
\begin{displaymath}
\|(C_n-C)U_n\|_{L^1(Q_T;\mathbb R^m)}
\le\|C_n-C\|_{L^2(0,T;L^{6/5}(\Omega;\mathbb R^{m\times m}))}
\frac{S_\Omega}{\sqrt{2\nu}}\|U_0\|_H\longrightarrow0.
\end{displaymath}
For the second term, it would be wrong to
argue that $L^1$ convergence of $U_n$ alone absorbs an arbitrary
singular multiplier. We use the {fixed} coefficient $C$.

For any measurable $E\subset(0,T)\times\Omega$, H\"older gives
\begin{equation}\label{eq:uniformintegrability}
\int_E |C|_{\mathrm F}|U_n-U|\,\mathrm dx\,\mathrm dt
\le \|C\mathbf1_E\|_{L^2(0,T;L^{6/5}(\Omega;\mathbb R^{m\times m}))}
\|U_n-U\|_{L^2(0,T;L^6(\Omega;\mathbb R^m))}.
\end{equation}
The last factor is uniformly bounded. To check absolute continuity of the first factor, fix $M>0$ and define
\begin{displaymath}
C^{(M)}(t,x):=C(t,x)\mathbf1_{\{|C(t,x)|_{\mathrm F}\le M\}}.
\end{displaymath}
This is a bounded, skew-symmetric truncation of the {fixed}
limiting coefficient; it is unrelated to the approximation index $n$.
Write $C=C^{(M)}+(C-C^{(M)})$.
The tail tends to zero in the mixed norm. Moreover,
\begin{displaymath}
\|C^{(M)}\mathbf1_E\|_{L^2(0,T;L^{6/5}(\Omega;\mathbb R^{m\times m}))}
\le M|\Omega|^{1/3}|E|^{1/2}.
\end{displaymath}
Thus the right-hand side of \eqref{eq:uniformintegrability} becomes
uniformly small over small-measure sets. Since $U_n\to U$ in $C([0,T];L^1(\Omega;\mathbb R^m))$,
we also have $U_n\to U$ in $L^1(Q_T;\mathbb R^m)$, hence in
measure on the finite-measure cylinder $Q_T$. Because $C$ is fixed
and finite almost everywhere, $C(U_n-U)\to0$ in measure.
The estimate \eqref{eq:uniformintegrability} and the preceding
truncation argument show that the family
$\{|C(U_n-U)|\}_{n\ge1}$ is uniformly integrable:
for every $\varepsilon>0$ some $\delta>0$ makes
$$\sup_n\int_E|C(U_n-U)|<\varepsilon$$ whenever $|E|<\delta$.
The finite-measure Vitali criterion
\cite[Theorem~12.44, Section~12.7]{DriverVitali2010}
(convergence in measure plus uniform integrability implies
convergence in $L^1(Q_T;\mathbb R^m)$) therefore
gives strong convergence in $L^1(Q_T;\mathbb R^m)$.
The next displayed estimate is also a direct proof of the same fact,
so the argument does not depend on an unverified external version
of Vitali's theorem.
Alternatively, the same truncation yields the direct estimate
\begin{displaymath}
\begin{aligned}
\|C(U_n-U)\|_{L^1(Q_T;\mathbb R^m)}
&\le K\|C-C^{(M)}\|_{L^2(0,T;L^{6/5}(\Omega;\mathbb R^{m\times m}))}\\
&\quad+M T\sup_{t\in[0,T]}\|U_n(t)-U(t)\|_{L^1(\Omega;\mathbb R^m)},
\end{aligned}
\end{displaymath}
where $K=\sup_n\|U_n-U\|_{L^2(0,T;L^6(\Omega;\mathbb R^m))}<\infty$;
we first take $n\to\infty$, then $M\to\infty$. Both arguments give the same conclusion: the first part of
$C_nU_n-CU$ vanishes because the coefficient converges strongly in its
Bochner norm and $U_n$ is bounded in the matching Sobolev space;
the second vanishes because the remaining multiplier $C$ is fixed,
so its high-amplitude tail can be chosen independently of $n$.
Neither mere convergence in measure of $C_n$ nor a bound on its
mixed norm would suffice. We obtain
\begin{equation}\label{eq:products}
C_nU_n\longrightarrow CU\qquad\text{in }
L^1((0,T)\times\Omega;\mathbb R^m).
\end{equation}
The important conclusion is that the limiting product has the precise
space--time integrability required by the distributional equation.

\medskip\noindent{Step 3: Obtain the limit equation before testing it by entropies.}
For each $\varphi\in C_c^\infty((0,T)\times\Omega;\mathbb R^m)$,
\begin{displaymath}
-\int_0^T\!\int_\Omega U_n\cdot\partial_t\varphi
+\nu\int_0^T\!\int_\Omega\nabla U_n:\nabla\varphi
+\int_0^T\!\int_\Omega(C_nU_n)\cdot\varphi=0.
\end{displaymath}
The first integral converges by \eqref{eq:cl1limit}; the second by
weak $L^2(0,T;V)$ convergence; the third by \eqref{eq:products}.
Thus we have exactly \eqref{eq:weak} and its equivalent
\begin{equation}\label{eq:derivative}
\partial_tU=\nu\Delta_DU-CU\quad\text{in }
\mathcal D'((0,T)\times\Omega;\mathbb R^m),
\end{equation}
where $\nu\Delta_DU\in L^2(0,T;V')$ and
$CU\in L^1((0,T)\times\Omega;\mathbb R^m)$. No stronger
membership of $CU$ in $L^2(0,T;V')$ is asserted. Weak closedness of
$L^2(0,T;V)$ retains the homogeneous Dirichlet condition. Crucially,
\eqref{eq:cl1limit}, \eqref{eq:products} and \eqref{eq:derivative}
verify {all} hypotheses of Lemma~\ref{lem:chain} for the limit.

\medskip\noindent{Step 4: Exact energy by radial entropies.}
Choose for $R>0$ the function
\begin{equation}\label{eq:psiR}
\Psi_R(\xi)=R^2\bigl(\sqrt{1+|\xi|^2/R^2}-1\bigr).
\end{equation}
For each fixed $R$, its gradient is bounded, making it admissible in
the pairing with $CU\in L^1(Q_T;\mathbb R^m)$; in the limit $R\to\infty$ it approximates quadratic
energy: for every fixed $\xi$, rationalizing the square root gives
$\Psi_R(\xi)=|\xi|^2/(\sqrt{1+|\xi|^2/R^2}+1)$,
which tends to $|\xi|^2/2$ as $R\to\infty$ and never exceeds it.
For fixed $R$ the gradient has bound $R$; thus the product
$CU\cdot D\Psi_R(U)$ is integrable since $CU\in L^1(Q_T;\mathbb R^m)$.
This is why we never test the singular equation directly with $U$.
If $a=(1+|\xi|^2/R^2)^{-1/2}$, then
\begin{equation}\label{eq:psiHess}
D\Psi_R(\xi)=a\xi,\qquad
D^2\Psi_R(\xi)=aI-\frac{a^3}{R^2}\xi\otimes\xi.
\end{equation}
To verify the Hessian bound, differentiate $a$:
$Da(\xi)[\eta]=-a^3(\xi\cdot\eta)/R^2$. Hence
$D(a\xi)[\eta]=a\eta-a^3(\xi\cdot\eta)\xi/R^2$,
which is the asserted formula. If $\eta\perp\xi$, then
$D^2\Psi_R(\xi)\eta=a\eta$. In the radial direction
$\eta=\xi$, use $a^{-2}=1+|\xi|^2/R^2$ to find
$D^2\Psi_R(\xi)\xi=a^3\xi$.
At $\xi=0$ the formula gives $a=1$ and $\xi\otimes\xi=0$, hence $D^2\Psi_R(0)=I$ and all its $m$ eigenvalues equal one. Because $0<a\le1$,
all eigenvalues belong to $[0,1]$: as quadratic forms
$0\le D^2\Psi_R(\xi)\le I$.
Furthermore
$|D\Psi_R(\xi)|=a|\xi|\le R$ and $D\Psi_R(0)=0$.
Because $C^\top=-C$, the integrable product
$(CU)\cdot D\Psi_R(U)=a(CU)\cdot U$ vanishes pointwise. Corollary
\ref{cor:diffusion} now gives, for every $0\le s\le t\le T$,
\begin{equation}\label{eq:energyR}
\begin{aligned}
\int_\Omega\Psi_R(U(t))\,\mathrm dx
&+\nu\int_s^t\!\int_\Omega\sum_{j=1}^3
D^2\Psi_R(U)[\partial_jU,\partial_jU]\\
&=\int_\Omega\Psi_R(U(s))\,\mathrm dx.
\end{aligned}
\end{equation}
For each fixed vector $\xi$,
$0\le\Psi_R(\xi)=|\xi|^2/(\sqrt{1+|\xi|^2/R^2}+1)
\uparrow |\xi|^2/2$. Since $U(s)$ and $U(t)$ belong to $H$ at
every time by Step~1, dominated (or monotone) convergence applies to
both endpoint terms: for each fixed $r\in\{s,t\}$,
$U(r)\in L^2(\Omega;\mathbb R^m)$, and the pointwise convergence
$\Psi_R(U(r,x))\to|U(r,x)|^2/2$ is dominated by the integrable
function $|U(r,x)|^2/2$. Thus
$\int_\Omega\Psi_R(U(r))\to\frac12\|U(r)\|_H^2$.
The diffusion integrand converges pointwise to
$|\nabla U|^2$, while its absolute value is bounded by
$|\nabla U|^2\in L^1((0,T)\times\Omega)$; hence dominated convergence
gives \eqref{eq:energy} after passage to the limit in the
{fixed limiting solution} $U$:
\begin{displaymath}
\lim_{R\to\infty}
\int_s^t\!\int_\Omega\sum_{j=1}^3
 D^2\Psi_R(U)[\partial_jU,\partial_jU]
\,\mathrm dx\,\mathrm d\tau
=\int_s^t\!\int_\Omega|\nabla U|^2\,\mathrm dx\,\mathrm d\tau.
\end{displaymath}
The integrable majorant is $|\nabla U|^2$, not the gradients of
approximate solutions. This matters: weak convergence of
$\nabla U_n$ alone would yield only a lower-semicontinuity
inequality and might leave a dissipation defect. We avoid that
inference altogether by proving the entropy identity directly for
$U$, then letting $R\to\infty$. Since the entropy identity was valid
for every fixed $0\le s\le t\le T$ and the limits hold at those
endpoints, the resulting exact equality holds for every such pair. 

\medskip\noindent{Step 5: Recover strong temporal $L^2$ continuity.}
We have $U\in C([0,T];L^1(\Omega;\mathbb R^m))$ and a uniform $H=L^2(\Omega;\mathbb R^m)$ bound for the value
$U(t,\cdot)$ at each fixed $t\in[0,T]$. For a fixed $g\in H$, choose
$g_k\in L^\infty(\Omega;\mathbb R^m)$ with
$\|g_k-g\|_H\to0$. Then
\begin{displaymath}
|(U(t)-U(s),g)_H|
\le\|g_k\|_{L^\infty(\Omega;\mathbb R^m)}
\|U(t)-U(s)\|_{L^1(\Omega;\mathbb R^m)}
+2\|U_0\|_H\|g_k-g\|_H.
\end{displaymath}
For each $k$, the first term tends to zero as $t\to s$, after which
$k\to\infty$ removes the second. This proves weak continuity
$U\in C_w([0,T];H)$. With $s=0$, the exact energy law is
\begin{displaymath}
\|U(t)\|_H^2=\|U_0\|_H^2
-2\nu\int_0^t\|\nabla U(\tau)\|_{L^2(\Omega;\mathbb R^{m\times3})}^2
\,\mathrm d\tau,
\end{displaymath}
a continuous function of $t$. To prove strong continuity without invoking a theorem by name,
fix any $s\in[0,T]$ and let $t_n\to s$. Weak continuity gives
$(U(t_n),U(s))_H\to\|U(s)\|_H^2$, while the exact energy
identity gives $\|U(t_n)\|_H^2\to\|U(s)\|_H^2$ because
$t\mapsto\int_0^t\|\nabla U(\tau)\|_2^2\,\mathrm d\tau$
is continuous. Consequently,
\begin{displaymath}
\|U(t_n)-U(s)\|_H^2=
\|U(t_n)\|_H^2+\|U(s)\|_H^2
-2(U(t_n),U(s))_H\longrightarrow0.
\end{displaymath}
This proves $U\in C([0,T];L^2(\Omega;\mathbb R^m))$,
including one-sided continuity at both endpoints. At $t=0$, the representative already
satisfies $U(0)=U_0$ in $L^1$ by Step~1, and both are in $H$, so their
equality holds in $H$. Existence with every assertion of
Theorem~\ref{thm:main} is proved.\qedhere
\end{proof}

\section{Contraction and uniqueness among arbitrary energy solutions}\label{sec:uniqueness}

Existence gives us one solution, but it would not be enough to show
that two {approximating sequences} have the same limit. The
uniqueness assertion concerns any competitor satisfying the
distributional equation and the stated energy regularity. Its
difference from another solution has precisely the continuity and
integrability required by the chain rule. We can therefore establish
the Euclidean modulus estimate directly for that difference, with
no assumption about how either solution was constructed.
\begin{proof}[Contraction and uniqueness in Theorem~\ref{thm:main}] $ $\\
{Step 1: Start with two solutions in the stated class.}
Let $U$ and $V$ be any two energy--distributional solutions for the
same $C$ (with possibly different initial data), and put $W=U-V$.
Then $W\in C([0,T];H)\cap L^2(0,T;V)$, and the spatial embedding
$H\hookrightarrow L^1(\Omega;\mathbb R^m)$ yields
$W\in C([0,T];L^1(\Omega;\mathbb R^m))$. Since $W\in L^2(0,T;H_0^1(\Omega;\mathbb R^m))$,
Sobolev--Poincar\'e and H\"older yield
\begin{displaymath}
\begin{aligned}
\|CW\|_{L^1(Q_T;\mathbb R^m)}
&\le\|C\|_{L^2(0,T;L^{6/5}(\Omega;\mathbb R^{m\times m}))}
\|W\|_{L^2(0,T;L^6(\Omega;\mathbb R^m))}\\
&\le S_\Omega\|C\|_{L^2(0,T;L^{6/5}(\Omega;\mathbb R^{m\times m}))}
\|\nabla W\|_{L^2(Q_T;\mathbb R^{m\times3})}<\infty.
\end{aligned}
\end{displaymath}
Thus $CW\in L^1(Q_T;\mathbb R^m)$.
Subtracting their distributional equations gives
\begin{displaymath}
\partial_tW=\nu\Delta_DW-CW\quad\text{in }
\mathcal D'((0,T)\times\Omega;\mathbb R^m).
\end{displaymath}
Thus all the hypotheses of Corollary~\ref{cor:diffusion} are satisfied
{without} requiring either competitor to be a limit of approximate
problems.

\medskip\noindent{Step 2: Use the Euclidean radial entropy.}
For $\varepsilon>0$ choose the same entropy as in
Lemma~\ref{lem:stability},
\begin{displaymath}
\Phi_\varepsilon(\xi)=\sqrt{|\xi|^2+\varepsilon^2}-\varepsilon,
\qquad D\Phi_\varepsilon(\xi)
=\frac{\xi}{\sqrt{|\xi|^2+\varepsilon^2}}.
\end{displaymath}
Its Hessian has tangential eigenvalues
$(|\xi|^2+\varepsilon^2)^{-1/2}$ and radial eigenvalue
$\varepsilon^2(|\xi|^2+\varepsilon^2)^{-3/2}$, both nonnegative.
Therefore the diffusion term in \eqref{eq:entropy} is nonnegative.
Because $D\Phi_\varepsilon(W)$ is a scalar multiple of $W$ and
$C^\top=-C$, the integrable zero-order term obeys
$(-CW)\cdot D\Phi_\varepsilon(W)=0$ almost everywhere.
Dropping only the nonnegative diffusion contribution yields
\begin{displaymath}
\int_\Omega\Phi_\varepsilon(W(t))\,\mathrm dx
\le\int_\Omega\Phi_\varepsilon(W(s))\,\mathrm dx
\quad(0\le s\le t\le T).
\end{displaymath}

\medskip\noindent{Step 3: Take the entropy limit at both times.}
For each fixed $\xi$, $0\le\Phi_\varepsilon(\xi)\le|\xi|$ and
$\Phi_\varepsilon(\xi)\to|\xi|$. At every fixed $r\in[0,T]$,
$W(r,\cdot)\in L^1(\Omega;\mathbb R^m)$, and
$\Phi_\varepsilon(W(r,x))\to|W(r,x)|$ for almost every $x$,
with domination $0\le\Phi_\varepsilon(W(r,x))\le|W(r,x)|$.
The dominated convergence theorem therefore gives
$\int_\Omega\Phi_\varepsilon(W(r))\,\mathrm dx
\to\|W(r)\|_{L^1(\Omega;\mathbb R^m)}$ for each fixed $r$.
Choose any $0\le s\le t\le T$ and apply this conclusion at
$r=s$ and $r=t$ in the same established inequality; because that
inequality held for every pair of times before taking the limit,
we obtain
\eqref{eq:contract} for every $0\le s\le t\le T$, with the norm
of the {whole vector}, not componentwise absolute values.
If $U(0)=V(0)$, set $s=0$; nonnegativity forces
$W(t)=0$ almost everywhere on $\Omega$ at every $t$. This is uniqueness
in the entire declared energy--distributional class, and no larger
class lacking the stipulated time traces is claimed.\qedhere
\end{proof}

\noindent{Comparison of proof methods.} The preceding entropy proof
establishes contraction for arbitrary energy solutions in the base class.
An independent argument using a bounded backward adjoint and a linear
Green identity for sum-space derivatives is given after the mixed-norm
theorem in Proposition~\ref{prop:alternative-contraction}. It proves
contraction and uniqueness without replacing the entropy calculation
needed for the exact energy identity.

\section{Strong stability of the approximation}\label{sec:strong}

The construction already gives uniform-in-time convergence in spatial
$L^1$ and weak control of the gradients. The remaining task is to
upgrade this information to the natural energy topology. The crucial
point is that weak convergence does not imply convergence of gradient
norms. Here the exact energy identities for both the approximations
and their limit provide the missing equality of norms. We can then
use the Hilbert-space principle that weak convergence together with
convergence of norms implies strong convergence.
\begin{proof}[Strong approximation statement \eqref{eq:strongconv}]$ $\\
We proceed in eight steps, because none of the following upgrades is an
automatic consequence of weak $L^2(0,T;V)$ convergence. The essential
new input is the exact energy identity already proved for the limit;
we do not use the strong approximation result to derive that identity.

\medskip\noindent{Step 1: Keep track of the full sequence and its weak limits.}
We know from \eqref{eq:cl1limit} and \eqref{eq:products} that
$U_n\to U$ in $C([0,T];L^1(\Omega;\mathbb R^m))$ and
$C_nU_n\to CU$ in $L^1((0,T)\times\Omega;\mathbb R^m)$.
Every weakly convergent subsequence of the bounded family $(U_n)$ in
$L^2(0,T;V)$ must have the distributional limit $U$. Reflexivity then
implies that the {whole} sequence converges weakly to $U$ in this
Bochner space. At each fixed time $t$, convergence in spatial $L^1$
and the uniform $H$ bound similarly imply $U_n(t)\rightharpoonup U(t)$
in $H$ (first test against bounded functions, then approximate any
$H$ test by bounded functions).

\medskip\noindent{Step 2: Apply lower semicontinuity at fixed times.}
Write
\begin{displaymath}
\begin{aligned}
a_n(t)&=\int_0^t\|\nabla U_n(\tau)\|_{L^2(\Omega;\mathbb R^{m\times3})}^2\,\mathrm d\tau,\\
a(t)&=\int_0^t\|\nabla U(\tau)\|_{L^2(\Omega;\mathbb R^{m\times3})}^2\,\mathrm d\tau.
\end{aligned}
\end{displaymath}
Restriction of weak $L^2(0,T;V)$ convergence to $(0,t)$ gives
$\liminf_n a_n(t)\ge a(t)$. Weak convergence of the values at time
$t$ gives $\liminf_n\|U_n(t)\|_H^2\ge\|U(t)\|_H^2$.
Both statements hold for every fixed $t$, including the endpoints.

\medskip\noindent{Step 3: Use the common exact energy to force norm convergence.}
Both energies have the identical right-hand side, since all approximate
problems start at $U_0$:
\begin{displaymath}
\|U_n(t)\|_H^2+2\nu a_n(t)
=\|U_0\|_H^2
=\|U(t)\|_H^2+2\nu a(t).
\end{displaymath}
The lower bounds from Step~2 already add up to this same sum. Neither
nonnegative term can retain a strict excess. Consequently
\begin{equation}\label{eq:pointwise-norms}
a_n(t)\to a(t),\qquad
\|U_n(t)\|_H^2\to\|U(t)\|_H^2
\qquad\text{for every }t\in[0,T].
\end{equation}
This is {pointwise-in-time} convergence; uniform convergence needs
its own argument.

\medskip\noindent{Step 4: Uniformize the cumulative dissipation.}
Every $a_n$ is nondecreasing as a function of the physical time $t$,
whereas $a$ is continuous and nondecreasing because its integrand is
$L^1$ in time. Given $\varepsilon>0$, choose
$0=t_0<\cdots<t_N=T$ such that
$a(t_{j+1})-a(t_j)<\varepsilon$ for all $j$. For large $n$,
$|a_n(t_j)-a(t_j)|<\varepsilon$ at each of the finitely many nodes.
If $t_j\le t\le t_{j+1}$, then
\begin{displaymath}
\begin{aligned}
a_n(t)-a(t)
&\le a_n(t_{j+1})-a(t_j)\\
&\le |a_n(t_{j+1})-a(t_{j+1})|
+a(t_{j+1})-a(t_j)<2\varepsilon.
\end{aligned}
\end{displaymath}
The reverse inequality follows from $a_n(t)\ge a_n(t_j)$ and
$a(t)\le a(t_{j+1})$. Hence
$\sup_{t\in[0,T]}|a_n(t)-a(t)|\to0$, and the energy identity implies
uniform convergence of $\|U_n(t)\|_H^2$ to $\|U(t)\|_H^2$.
Only monotonicity in {time} is used, not monotonicity in $n$.

\medskip\noindent{Step 5: Obtain uniform weak convergence for fixed tests.}
For $g\in H$, choose bounded $g_k\to g$ in $H$. Uniform boundedness of
$U_n(t)$ and $U(t)$ in $H$ gives
\begin{displaymath}
\sup_{t\in[0,T]}|(U_n(t)-U(t),g)_H|
\le\|g_k\|_{L^\infty(\Omega;\mathbb R^m)}
\sup_t\|U_n(t)-U(t)\|_{L^1(\Omega;\mathbb R^m)}
+2\|U_0\|_H\|g_k-g\|_H.
\end{displaymath}
First let $n\to\infty$ for fixed $k$, and then let $k\to\infty$.
This proves uniform-in-time weak convergence against every {fixed}
$g$, but not yet against the moving test $U(t)$.

\medskip\noindent{Step 6: Control the moving test using a finite net.}
The range $K=U([0,T])$ is compact in $H$ because strong temporal
continuity was proved in Step~5 of the {existence} proof.
For $\delta>0$, select a finite $\delta$-net $g_1,\ldots,g_J$ of
$K$. For each $t$, choose an index $j(t)$ with
$\|U(t)-g_{j(t)}\|_H<\delta$. Then
\begin{displaymath}
\begin{aligned}
\sup_t|(U_n(t)-U(t),U(t))_H|
&\le\max_{1\le j\le J}\sup_t
|(U_n(t)-U(t),g_j)_H|\\
&\quad+2\|U_0\|_H\delta.
\end{aligned}
\end{displaymath}
The maximum tends to zero for each fixed net by Step~5; then
$\delta\downarrow0$ proves uniform convergence of the cross term.
The net depends on $U$ and $\delta$, not on $n$.

\medskip\noindent{Step 7: Obtain strong $C([0,T];H)$ convergence.}
The Hilbert identity
\begin{displaymath}
\|U_n(t)-U(t)\|_H^2
=\|U_n(t)\|_H^2-\|U(t)\|_H^2
-2(U_n(t)-U(t),U(t))_H
\end{displaymath}
combined with Steps~4 and~6 gives
$\sup_t\|U_n(t)-U(t)\|_H\to0$.
This is the strong temporal convergence claimed in \eqref{eq:strongconv}.

\medskip\noindent{Step 8: Obtain strong $L^2(0,T;V)$ convergence.}
Set $\mathcal G=L^2((0,T)\times\Omega;\mathbb R^{m\times3})$.
Taking $t=T$ in \eqref{eq:pointwise-norms} yields
$\|\nabla U_n\|_{\mathcal G}\to\|\nabla U\|_{\mathcal G}$.
Their gradients already converge weakly in this Hilbert space; weak
convergence together with convergence of norms implies strong
gradient convergence. Step~7 yields strong convergence in
$L^2((0,T)\times\Omega;\mathbb R^m)$. Therefore
$U_n\to U$ strongly in $L^2(0,T;V)$, completing the proof.\qedhere
\end{proof}

\section{The full mixed-norm coefficient range}\label{sec:mixed}

The base theorem was presented first for a mathematical reason:
it isolates the loss of admissibility of the energy test and
establishes the conditional chain rule, all-time identity and
unrestricted comparison in a single critical coefficient class.
These are the same analytic mechanisms needed for the general case,
not merely a special example. What remains is to show that the
mixed exponents still control the coefficient--solution product and
permit approximation without losing the endpoint bounds.
We now extend the argument, beginning with comparison in
$L^1(\Omega;\mathbb R^m)$, exact energy at the limit and strong
convergence. We now ask how far this argument can be
pushed when time and space integrability compensate for one another.
The answer is the full coefficient region in
Theorem~\ref{thm:mixed}. In this region, an energy-space multiplier
estimate controls $CU$, but the endpoint cases also require careful
approximation of the coefficient. In particular, an $L^\infty$ time
norm cannot be treated as though truncation converged strongly in
that norm. We give the proof rather than inferring the theorem from
product integrability alone.

\begin{theorem}[Mixed-norm well-posedness and coefficient stability]
\label{thm:mixed}
Let $\Omega\subset\mathbb R^3$ be a bounded Lipschitz domain,
$T,\nu>0$, and $m\ge1$. Set $H=L^2(\Omega;\mathbb R^m)$ and
$V=H_0^1(\Omega;\mathbb R^m)$. Suppose
\begin{equation}\label{eq:mixed-region}
 1\le r\le\infty,\qquad \frac65\le q\le\infty,\qquad
 \frac2r+\frac3q\le\frac72,\qquad \frac1\infty:=0,
\end{equation}
and let $C\in L^r(0,T;L^q(\Omega;\mathbb R^{m\times m}))$
be real and skew-symmetric almost everywhere. For each $U_0\in H$
there is exactly one energy--distributional solution
$U\in C([0,T];H)\cap L^2(0,T;V)$ with $CU\in
L^1((0,T)\times\Omega;\mathbb R^m)$, homogeneous Dirichlet
boundary condition and $U(0)=U_0$. The distributional identity
is \eqref{eq:weak-definition}. For $0\le s\le t\le T$,
\begin{equation}\label{eq:mixed-energy}
 \frac12\|U(t)\|_H^2+\nu\int_s^t
 \|\nabla U(\tau)\|_{L^2(\Omega;\mathbb R^{m\times3})}^2\dd\tau
 =\frac12\|U(s)\|_H^2.
\end{equation}
For any two such solutions $U,W$ with the same coefficient,
possibly different initial data, one has
\begin{equation}\label{eq:mixed-contraction}
 \|U(t)-W(t)\|_{L^1(\Omega;\mathbb R^m)}
 \le\|U(s)-W(s)\|_{L^1(\Omega;\mathbb R^m)}
 \quad(0\le s\le t\le T).
\end{equation}
Moreover, if $C_n$ are skew-symmetric and
$C_n\to C$ strongly in
$L^r(0,T;L^q(\Omega;\mathbb R^{m\times m}))$, and $U_n$ have
{the same} initial datum $U_0$, then
\begin{equation}\label{eq:mixed-stability}
 \begin{aligned}
 U_n&\longrightarrow U&&\text{in }C([0,T];H)\cap L^2(0,T;V),\\
 C_nU_n&\longrightarrow CU&&\text{in }L^1((0,T)\times\Omega;\mathbb R^m).
 \end{aligned}
\end{equation}
These conclusions include all exponent endpoints allowed by
\eqref{eq:mixed-region}; no approximability restriction is imposed on
the class in which uniqueness is claimed.
\end{theorem}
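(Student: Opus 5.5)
The plan is to rerun the proof of Theorem~\ref{thm:main} with one new ingredient: a mixed-norm multiplier estimate that replaces \eqref{eq:holder} and \eqref{eq:cauchyL1}. Every energy solution satisfies $U\in L^\infty(0,T;H)\cap L^2(0,T;L^6)$. Interpolation then gives $U\in L^{p}(0,T;L^{\sigma}(\Omega;\mathbb R^m))$ whenever $2/p+3/\sigma=3/2$ and $2\le\sigma\le6$, with
\[
\|U\|_{L^p_tL^\sigma_x}\le \|U\|_{L^\infty_tH}^{1-\theta}\,\bigl(S_\Omega\|\nabla U\|_{L^2(Q_T)}\bigr)^{\theta}.
\]
For $(r,q)$ on the line $2/r+3/q=7/2$, the H\"older conjugates $r'$ and $q'$ satisfy exactly $2/r'+3/q'=5-7/2=3/2$. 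Moreover $q\ge6/5$ gives $q'\le6$, and the line forces $q\le 6/5\cdot$... more precisely $q'\ge2$ whenever $3/q\le 7/2$. Points strictly inside the region reduce to the line by H\"older in time and space on the bounded cylinder, at the cost of constants depending on $T$ and $|\Omega|$. This yields
\[
\|CU\|_{L^1(Q_T)}\le K\,\|C\|_{L^r_tL^q_x}\bigl(\|U\|_{L^\infty_tH}+\|\nabla U\|_{L^2(Q_T)}\bigr).
\]
The estimate uses only the energy class, so it applies to arbitrary competitors. The endpoint $r=1$, $q=\infty$ pairs with $L^\infty_tL^2_x$, and $r=\infty$, $q=3/2$ pairs with $L^2_tL^6_x$; both are included.

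With this bound in hand, the uniqueness and contraction part goes through unchanged. For two solutions, $W=U-V$ lies in $C([0,T];L^1)\cap L^2(0,T;V)$ and $CW\in L^1(Q_T)$. We then apply Corollary~\ref{cor:diffusion} with $\Phi_\varepsilon$ exactly as in Section~\ref{sec:uniqueness}. Existence, energy equality and stability follow the order of Section~\ref{sec:existence}. First approximate $C$ by bounded skew fields $C_n$, solve with Lemma~\ref{lem:bounded}, and use the uniform energy bound $\|U_n\|_{L^\infty_tH}+\|\nabla U_n\|_{L^2}\le c\|U_0\|_H$. Inequality \eqref{eq:stability} combined with the multiplier estimate gives $\sup_t\|U_n-U_m\|_{L^1}\le K'\|U_0\|_H\|C_n-C_m\|_{L^r_tL^q_x}$. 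This produces a limit in $C([0,T];L^1)$.

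Next, the products are identified as in Step~2. The term $(C_n-C)U_n$ is handled by the multiplier estimate. The term $C(U_n-U)$ is handled by splitting $C=C^{(M)}+(C-C^{(M)})$ and using the $L^1$ convergence for the bounded part. Once the limit equation holds, the entropy $\Psi_R$ gives the energy equality, and Steps 4--5 give continuity in $C([0,T];H)$. The stability proof of Section~\ref{sec:strong} applies verbatim to any sequence with $C_n\to C$. If the given $C_n$ are not bounded, I first approximate each $C_n$ by a bounded skew field, run a diagonal argument, and invoke uniqueness to identify the limits.

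The main obstacle is the approximation step when $r=\infty$ (or $q=\infty$). There, truncations $C\mathbf 1_{\{|C|\le n\}}$ need not converge in $L^\infty_tL^q_x$, and the tail $C-C^{(M)}$ need not become small in the mixed norm. I would first try to avoid needing norm convergence at all. The idea is to use the estimate in the form $\int_{Q_T}|C-C_n||U_n|$ and to exploit that $U_n$ is bounded in a strictly better space. When $r=\infty$ and $q>6/5$, one can trade some spatial room for temporal room: the pair $(r_1,q_1)$ with $r_1<\infty$ on the admissible line, or slightly inside it, satisfies $L^\infty_tL^q_x\subset L^{r_1}_tL^{q_1}_x$ on the bounded cylinder. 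In that space truncation converges by dominated convergence. At the corner $r=\infty$, $q=3/2$ this room vanishes. I would then use dominated convergence directly on $\int|C-C^{(M)}||U_n|$ with $U_n$ bounded in $L^2_tL^6_x$. The key quantity becomes $\int_0^T\|(C-C^{(M)})(t)\|_{L^{3/2}}\|U_n(t)\|_{L^6}\,dt$, where $\|(C-C^{(M)})(t)\|_{L^{3/2}}\to0$ for a.e.\ $t$ and is uniformly bounded. The weight $\|U_n(t)\|_{L^6}^2$ is controlled in $L^1_t$, but it is not uniformly integrable in $t$, and that uniform integrability is what is needed to conclude. If this fails, I would use the strong convergence of the $U_n$, first proved at the noncritical levels, to get equi-integrability of $\|\nabla U_n(t)\|^2$ in time, which closes the argument.
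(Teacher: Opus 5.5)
Your overall route is correct and is essentially the paper's. It uses the interpolation multiplier (your reduction to the line $2/r+3/q=7/2$ is Lemma~\ref{lem:mixed-mult}), bounded skew truncations, the $C([0,T];L^1)$ Cauchy estimate, and the radial entropies $\Psi_R$ and $\Phi_\varepsilon$ applied to the limit and to differences of arbitrary competitors. Stability then follows from convergence of norms.

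Two deviations are harmless:
\begin{itemize}
\item You get $C_nU_n\to CU$ strongly in $L^1(Q_T)$ already at the construction stage, via the split $C=C^{(M)}+(C-C^{(M)})$. The paper's mixed proof first obtains only distributional convergence and upgrades at the end.
\item For unbounded $C_n$, your diagonal argument through bounded approximants plus uniqueness works. The paper's direct forced radial estimate for $U_n-U$ is shorter and also gives the rate \eqref{eq:mixed-L1-stability}.
\end{itemize}

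What is wrong is your last paragraph. The obstacle you describe does not exist, and the fallback there should be dropped.

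\begin{itemize}
\item Every point with $r=\infty$ satisfies the mixed inequality strictly, since $3/q\le 5/2<7/2$ for $q\ge 6/5$. On the finite cylinder, $L^\infty(0,T;L^q)\subset L^2(0,T;L^{6/5})$ for \emph{every} $q\ge6/5$. This includes $q=3/2$, which is not a corner of the region, and $q=6/5$, which is.
\item The truncations therefore converge in $L^2(0,T;L^{6/5})$ by dominated convergence. The $(2,6/5)$ instance of the multiplier turns this into the operator convergence \eqref{eq:mixed-trunc-op}. This is exactly Lemma~\ref{lem:mixed-trunc}, and it handles both your Cauchy estimate and the tail $C-C^{(M)}$ in the product step.
\item Your own formula already closes without any uniform integrability in time. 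Put $f_M(t)=\|(C-C^{(M)})(t)\|_{L^{3/2}}$; it is bounded and tends to $0$ a.e., hence $f_M\to0$ in $L^2(0,T)$. Cauchy--Schwarz then gives
$\int_0^T f_M(t)\|U_n(t)\|_{L^6}\,dt\le\|f_M\|_{L^2(0,T)}\sup_n\|U_n\|_{L^2(0,T;L^6)}\to0$, uniformly in $n$.
\item The appeal to ``strong convergence at noncritical levels'' is vague, risks circularity, and is not needed.
\item The case $q=\infty$ with $r<\infty$ is also harmless: the truncations converge in $L^r(0,T;L^\infty)$ by dominated convergence in time.
\end{itemize}

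Finally, a small correction: on the line, $q'\ge2$ follows from $r\ge1$, which forces $3/q\ge 3/2$. It does not follow from $3/q\le 7/2$.
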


\begin{lemma}[The mixed energy multiplier]\label{lem:mixed-mult}
Set $\mathcal E=L^\infty(0,T;H)\cap L^2(0,T;V)$, with
$\|v\|_{\mathcal E}=\|v\|_{L^\infty(0,T;H)}+
\|v\|_{L^2(0,T;V)}$. If $(r,q)$ satisfies
\eqref{eq:mixed-region}, multiplication defines a bilinear map
\begin{displaymath}
L^r(0,T;L^q(\Omega;\mathbb R^{m\times m}))\times\mathcal E
\longrightarrow L^1((0,T)\times\Omega;\mathbb R^m)
\end{displaymath}
with
\begin{equation}\label{eq:mixed-mult}
\|Dv\|_{L^1(Q_T;\mathbb R^m)}\le K_{\Omega,T,r,q}
\|D\|_{L^r(0,T;L^q(\Omega;\mathbb R^{m\times m}))}
\|v\|_{\mathcal E}.
\end{equation}
\end{lemma}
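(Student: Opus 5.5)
The plan is to interpolate the energy space into a family of mixed Lebesgue norms and then apply Hölder in space and time. \emph{First}, for $v\in\mathcal E$ and almost every $t$, the Sobolev inequality of Remark~\ref{rem:integrability} together with Lebesgue interpolation between $L^2(\Omega;\mathbb R^m)$ and $L^6(\Omega;\mathbb R^m)$ gives, for $2\le p\le6$ with $\frac1p=\frac{1-\theta}{2}+\frac{\theta}{6}$,
\begin{displaymath}
\|v(t)\|_{L^p(\Omega;\mathbb R^m)}\le S_\Omega^{\theta}\|v(t)\|_H^{1-\theta}\|\nabla v(t)\|_{L^2(\Omega;\mathbb R^{m\times3})}^{\theta}.
\end{displaymath}
Taking the $L^s(0,T)$ norm with $s=2/\theta$ then yields
\begin{displaymath}
\|v\|_{L^s(0,T;L^p)}\le S_\Omega^\theta\|v\|_{\mathcal E}.
\end{displaymath}
Eliminating $\theta=\frac32-\frac3p$ shows that $v\in L^s_tL^p_x$ on the parabolic scaling line $\frac2s+\frac3p=\frac32$, for $2\le p\le6$ and $2\le s\le\infty$. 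Lower time exponents follow from finite $T$ through Hölder in time, and lower space exponents $p<2$ follow from the bounded domain, each at the cost of constants depending on $T$ and $|\Omega|$.

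\emph{Second}, given $(r,q)$ in \eqref{eq:mixed-region}, I choose conjugates $p=q'$ and $s=r'$. Pointwise $|Dv|\le|D|_{\mathrm F}|v|$, so Hölder in $x$ and then in $t$ gives
\begin{displaymath}
\|Dv\|_{L^1}\le\|D\|_{L^r_tL^q_x}\|v\|_{L^{r'}_tL^{q'}_x}.
\end{displaymath}
Since $\frac2{r'}+\frac3{q'}=5-\bigl(\frac2r+\frac3q\bigr)$, the condition $\frac2r+\frac3q\le\frac72$ is exactly $\frac2{r'}+\frac3{q'}\ge\frac32$. Likewise $q\ge6/5$ gives $q'\le6$, and $r\ge1$ gives $r'\le\infty$. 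Thus $(r',q')$ lies on or above the scaling line, inside the admissible box. The step requiring care is to find a point $(s,p)$ on the line with $s\ge r'$ and $p\ge q'$, so that finite measure lowers the exponents to $(r',q')$.

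This is done by cases. If $q'\le2$, I take $p=2$ and $s=\infty$. If $2<q'\le6$, I take $p=q'$ and $s$ defined by $\frac2s=\frac32-\frac3{q'}$. In the second case
\begin{displaymath}
\frac2s\le\frac2{r'}
\end{displaymath}
holds precisely because of the mixed condition, so $s\ge r'$. The case $s=\infty$ corresponds to $q'=2$, and $r'=\infty$ (that is, $r=1$) forces $q'\le2$, which is consistent with $\frac3{q'}\ge\frac32$. The endpoints $r=\infty$ and $q=\infty$ give $r'=1$ and $q'=1$ and are covered by the same finite-measure reductions.

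Collecting the constants $S_\Omega^\theta$, $T^{1/r'-1/s}$ and $|\Omega|^{1/q'-1/p}$ gives $K_{\Omega,T,r,q}$. Bilinearity and measurability of $Dv$ are immediate, since $Dv$ is a pointwise product of measurable fields. I expect the only real obstacle to be this endpoint bookkeeping, in particular ensuring that no infinite exponent enters an interpolation with a vanishing weight. That is why the cases are split at $q'=2$ rather than handled by a single formula.
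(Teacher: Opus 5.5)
Your proposal is correct and is essentially the paper's proof: interpolate the energy space into $L^{2/\theta}(0,T;L^{q'}(\Omega))$ via Sobolev and Lebesgue interpolation with $\theta=3/q-3/2$, note that $2/r+3/q\le 7/2$ is equivalent to $r'\le 2/\theta$, and apply H\"older in space and time, using a separate finite-measure argument when $q\ge2$ (that is, $q'\le2$). Your parametrization by the scaling line $2/s+3/p=3/2$ and your explicit collection of the constants $S_\Omega^\theta$, $T^{1/r'-1/s}$ and $|\Omega|^{1/q'-1/p}$ only restate the paper's $\theta$ and $p_*$ bookkeeping.
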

\begin{proof}
For $6/5\le q\le2$ write $q'=q/(q-1)$ and
$\theta=3/q-3/2\in[0,1]$. The Sobolev interpolation inequality
applied to $v(t)\in V$ gives
\begin{displaymath}
\|v(t)\|_{L^{q'}(\Omega;\mathbb R^m)}\le K
 \|v(t)\|_H^{1-\theta}\|v(t)\|_V^\theta.
\end{displaymath}
For $\theta>0$, integration with $p_*=2/\theta$ yields
\begin{equation}\label{eq:mixed-interpolation}
\|v\|_{L^{p_*}(0,T;L^{q'}(\Omega;\mathbb R^m))}
\le K\|v\|_{L^\infty(0,T;H)}^{1-\theta}
\|v\|_{L^2(0,T;V)}^\theta.
\end{equation}
At $\theta=0$ this is the $L^\infty(0,T;H)$ bound and we
put $p_*=\infty$. The restriction on $(r,q)$ is equivalent to
$1/r+\theta/2\le1$, or $r'\le p_*$. H\"older in space and
on the finite time interval now proves \eqref{eq:mixed-mult}.
For $q\ge2$, $q'\le2$, and
$L^2(\Omega;\mathbb R^m)\hookrightarrow
L^{q'}(\Omega;\mathbb R^m)$ since $|\Omega|<\infty$.
Pair the resulting $L^\infty$ time bound with
$D\in L^1(0,T;L^q(\Omega))$, which follows from $r\ge1$ and
$T<\infty$. Both $q=\infty$ and $r=1$ are permitted.
\end{proof}

\begin{lemma}[Skew-symmetric bounded truncations]
\label{lem:mixed-trunc}
Assume $1\le r\le\infty$, $6/5\le q\le\infty$ and
$2/r+3/q\le7/2$, as in \eqref{eq:mixed-region}.
For $D\in L^r(0,T;L^q(\Omega;\mathbb R^{m\times m}))$
real and skew-symmetric, let
\begin{displaymath}
D_k(t,x)=D(t,x)\min\{1,k/|D(t,x)|_{\mathrm F}\},
\end{displaymath}
with the multiplier equal to $1$ at $D=0$. Then $D_k$ is
measurable, bounded and skew-symmetric. If $r<\infty$,
$D_k\to D$ strongly in $L^r(0,T;L^q(\Omega))$.
If $r=\infty$, one instead has
$D_k\to D$ strongly in
$L^2(0,T;L^{6/5}(\Omega;\mathbb R^{m\times m}))$.
In either case
\begin{equation}\label{eq:mixed-trunc-op}
\sup_{\|v\|_{\mathcal E}\le1}
\|(D_k-D)v\|_{L^1(Q_T;\mathbb R^m)}\longrightarrow0.
\end{equation}
\end{lemma}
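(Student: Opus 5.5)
Basic properties first. The multiplier $\min\{1,k/|D|_{\mathrm F}\}$ is a measurable scalar function of $(t,x)$, so $D_k$ is measurable and skew-symmetric, and $|D_k|_{\mathrm F}\le\min\{k,|D|_{\mathrm F}\}$. Moreover
\begin{displaymath}
|D_k-D|_{\mathrm F}=(|D|_{\mathrm F}-k)_+\le|D|_{\mathrm F}\,\mathbf 1_{\{|D|_{\mathrm F}>k\}},
\end{displaymath}
which tends to $0$ pointwise a.e. because $D$ is finite a.e.

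\emph{Case $r<\infty$.} If also $q<\infty$, I would argue exactly as in Step~1 of the existence proof. Dominated convergence in $x$ with majorant $|D(t)|_{\mathrm F}^q$ gives $\|D_k(t)-D(t)\|_{L^q}\to0$ for a.e. $t$. Dominated convergence in $t$ with majorant $\|D(t)\|_{L^q}^r$ then gives convergence in $L^r_tL^q_x$. If $q=\infty$, pointwise convergence no longer gives $L^\infty_x$ convergence, and I would use a different bound. For a.e. $t$,
\begin{displaymath}
\|D_k(t)-D(t)\|_{L^\infty_x}=(\|D(t)\|_{L^\infty_x}-k)_+,
\end{displaymath}
which is finite for a.e. $t$, tends to $0$, and is dominated by $\|D(t)\|_{L^\infty_x}\in L^r(0,T)$. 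Dominated convergence in time concludes. In both subcases, \eqref{eq:mixed-trunc-op} then follows by applying Lemma~\ref{lem:mixed-mult} to $D_k-D$.

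\emph{Case $r=\infty$.} First, the region \eqref{eq:mixed-region} gives $3/q\le7/2$, i.e. $q\ge6/7$, which is automatic, so any $q\ge6/5$ is allowed. Since $q\ge6/5$ and $|\Omega|,T<\infty$, Hölder gives the continuous embedding $L^\infty_tL^q_x\hookrightarrow L^2_tL^{6/5}_x$. The $L^2_tL^{6/5}_x$ convergence then follows from the previous paragraph with $(r,q)=(2,6/5)$, using $D\in L^2_tL^{6/5}_x$.

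The main obstacle is \eqref{eq:mixed-trunc-op} itself, since $D_k$ need not converge in $L^\infty_tL^q_x$. I would split $v$ using the two components of $\|v\|_{\mathcal E}$ and do the cases in this order.

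If $q\ge2$, I would use $\|v\|_{L^\infty_tH}$ and Hölder:
\begin{displaymath}
\|(D_k-D)v\|_{L^1(Q_T)}\le\|D_k-D\|_{L^1_tL^2_x}\,\|v\|_{L^\infty_tH}.
\end{displaymath}
Here $D\in L^\infty_tL^2_x\subset L^1_tL^2_x$, so the first factor tends to $0$ by the $r<\infty$ argument at $(1,2)$. This case includes $q=\infty$ through $L^\infty_x\subset L^2_x$.

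If $6/5\le q<2$, I would instead use the $L^2_tV$ part, through $V\hookrightarrow L^6$ in space:
\begin{displaymath}
\|(D_k-D)v\|_{L^1(Q_T)}\le\|D_k-D\|_{L^2_tL^{6/5}_x}\,S_\Omega\|v\|_{L^2_tV}.
\end{displaymath}
The first factor tends to $0$ by the convergence just proved.

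In both cases the bound is uniform over $\|v\|_{\mathcal E}\le1$. So the key point is that at $r=\infty$ one can always move to a weaker coefficient norm with finite time exponent that still pairs with $\mathcal E$, rather than asking for convergence in the $L^\infty$ time norm.
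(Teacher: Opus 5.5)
Your proof is correct and follows the paper's argument: dominated convergence first in $x$ and then in $t$ (using the bound $(\|D(t)\|_{L^\infty}-k)_+$ when $q=\infty$), then Lemma~\ref{lem:mixed-mult} for finite $r$, and the finite-measure embedding into $L^2(0,T;L^{6/5}(\Omega))$ when $r=\infty$. The only difference is your case split at $r=\infty$. The paper applies the $(2,6/5)$ multiplier bound for every $q\ge 6/5$, so your separate $q\ge2$ branch via $L^1_tL^2_x$ and $L^\infty_tH$ is valid but not needed.
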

\begin{proof}
Scalar truncation preserves $D^\top=-D$. For finite $r$ and
$q<\infty$, dominated convergence first in $x$ and then in $t$
applies. When $q=\infty$, for almost every $t$ the spatial
essential supremum of $|D(t)|$ is finite, and the pointwise spatial
truncation error has $L^\infty(\Omega)$ norm tending to zero;
its norm is bounded by $\|D(t)\|_{L^\infty(\Omega)}$, an
$L^r(0,T)$ function. Dominated convergence in time still applies.
For $r=\infty$ we make no false assertion of strong truncation in
the $L^\infty$ temporal norm: the finite measure of $Q_T$ and the
inequality $q\ge6/5$ imply $D\in L^2(0,T;L^{6/5}(\Omega))$.
Dominated convergence proves convergence there.
Finally use Lemma~\ref{lem:mixed-mult} for finite $r$ and its
$(2,6/5)$ instance for $r=\infty$.
\end{proof}

\begin{proof}[Proof of Theorem~\ref{thm:mixed}] $ $\\
We separate construction from the exact energy balance, since
using the unknown singular solution as a test would not be legitimate.

\smallskip\noindent{Step 1. Bounded-coefficient approximations.}
Let $C_k$ be as in Lemma~\ref{lem:mixed-trunc}.
For bounded $C_k$, Lemma~\ref{lem:bounded} supplies the unique
$U_k\in C([0,T];H)\cap L^2(0,T;V)$ with datum $U_0$ and
$\partial_tU_k\in L^2(0,T;V')$. Its energy identity is
\begin{equation}\label{eq:mixed-Ek}
\frac12\|U_k(t)\|_H^2+\nu\int_0^t\|\nabla U_k(\tau)\|_2^2\dd\tau
=\frac12\|U_0\|_H^2\quad(0\le t\le T).
\end{equation}
In particular, $(U_k)$ is bounded in $\mathcal E$.

\smallskip\noindent{Step 2. Strong temporal $L^1$ convergence.}
For $Z_{k\ell}=U_k-U_\ell$, distributionally
\begin{displaymath}
\partial_tZ_{k\ell}=\nu\Delta_DZ_{k\ell}-C_kZ_{k\ell}
 +(C_\ell-C_k)U_\ell,\qquad Z_{k\ell}(0)=0.
\end{displaymath}
The source lies in $L^1(Q_T;\mathbb R^m)$, and its norm is bounded by
$\varepsilon_{k\ell}\|U_\ell\|_{\mathcal E}$ with
$\varepsilon_{k\ell}\to0$, by \eqref{eq:mixed-trunc-op}.
The radial consequence of Lemma~\ref{lem:chain} (derived in
Lemma~\ref{lem:stability}) gives
\begin{equation}\label{eq:mixed-Cauchy}
\sup_{t\in[0,T]}\|U_k(t)-U_\ell(t)\|_{L^1(\Omega;\mathbb R^m)}
\le\|(C_\ell-C_k)U_\ell\|_{L^1(Q_T;\mathbb R^m)}\to0.
\end{equation}
Let $U$ denote the limit in $C([0,T];L^1(\Omega;\mathbb R^m))$.
The energy bound yields $U\in L^2(0,T;V)$ by weak compactness.
For each $t\in[0,T]$, $U_k(t)\to U(t)$ in $L^1(\Omega)$ and
$\|U_k(t)\|_H\le\|U_0\|_H$, so $U(t)\in H$ and
$U_k(t)\rightharpoonup U(t)$ in $H$. Approximate arbitrary elements
of $H$ by bounded functions to deduce that $U:[0,T]\to H$ is
weakly continuous, including its initial value $U(0)=U_0$.

\smallskip\noindent{Step 3. Identify the singular product.}
Lemma~\ref{lem:mixed-mult} shows that $CU\in L^1(Q_T;\mathbb R^m)$.
Moreover $(C_k-C)U_k\to0$ in $L^1(Q_T)$, by
Lemma~\ref{lem:mixed-trunc}. To handle $C(U_k-U)$ we do
{not} assume a product of weakly convergent sequences converges
strongly. Instead fix $\varphi\in C_c^\infty(Q_T;\mathbb R^m)$ and
consider the following exhaustive cases.

If $r<\infty$ and $q<2$, then $r>1$ and $q'>2$. The interpolation bound makes $(U_k)$ bounded in $L^{r'}(0,T;L^{q'}(\Omega;\mathbb R^m))$, with $1<r',q'<\infty$. Since $U_k\to U$ in $L^1(Q_T;\mathbb R^m)$, reflexivity identifies its weak limit in this mixed-norm space as $U$. Consequently, $\int_{Q_T}CU_k\cdot\varphi\to\int_{Q_T}CU\cdot\varphi$ for every $\varphi\in C_c^\infty(Q_T;\mathbb R^m)$, because $C^\top\varphi\in L^r(0,T;L^q(\Omega;\mathbb R^m))$ and the mixed-norm spaces are dual under the corresponding H\"older pairing.

If $r<\infty$ and $q\ge2$, then
$C^\top\varphi\in L^1(0,T;H)$ by the finite-measure
embedding of $L^q(\Omega)$ into $L^2(\Omega)$.
Strong $L^1(Q_T)$ convergence together with the uniform
$L^\infty(0,T;H)$ bound identifies the weak-star limit of
$U_k$ in $L^\infty(0,T;H)$ as $U$.
This includes $(r,q)=(1,2)$ and $q=\infty$.

Finally, if $r=\infty$, then
$C\in L^2(0,T;L^{6/5}(\Omega;\mathbb R^{m\times m}))$;
$U_k\rightharpoonup U$ in
$L^2(0,T;L^6(\Omega;\mathbb R^m))$, using the continuous
Sobolev embedding of $V$ into $L^6(\Omega;\mathbb R^m)$.
This provides the required pairing with $C^\top\varphi$.
All cases establish $C_kU_k\to CU$ in distributions.
Passing to the limit in the bounded equations yields
$\partial_tU=\nu\Delta_DU-CU$ in $\mathcal D'(Q_T;\mathbb R^m)$;
$U\in L^2(0,T;V)$ supplies the homogeneous Dirichlet trace.

\smallskip\noindent{Step 4. Exact energy and strong temporal traces.}
We may now apply Lemma~\ref{lem:chain}, and we do so only
{after} identifying the singular equation and its $C_tL^1$
representative. Its hypotheses hold with
$A=\nu\Delta_DU\in L^2(0,T;V')$ and
$B=-CU\in L^1(Q_T;\mathbb R^m)$.
Choose a smooth function $\chi:[0,\infty)\to[0,1]$ which is
$1$ on $[0,1]$ and $0$ on $[2,\infty)$, put
$h''=\chi$ and $h(0)=h'(0)=0$, and set
$\Phi_R(z)=R^2h(|z|/R)$.
Each $\Phi_R\in C^2(\mathbb R^m)$ has
$\Phi_R(0)=D\Phi_R(0)=0$ and bounded gradient and Hessian;
its Hessian eigenvalues are $\chi(|z|/R)$ and
$h'(|z|/R)/(|z|/R)$, both in $[0,1]$ (with the continuous
value $1$ at the origin). In particular,
\begin{displaymath}
	0\le\Phi_R(z)\le\frac12|z|^2,\quad
	0\preceq D^2\Phi_R(z)\preceq I,\quad
	\Phi_R(z)\to\frac12|z|^2,\quad D^2\Phi_R(z)\to I.
\end{displaymath}
Here $A\preceq B$ means that $B-A$ is positive semidefinite
for symmetric matrices $A,B\in\mathbb R^{m\times m}$.
Because $D\Phi_R(z)$ is collinear with $z$,
$CU\cdot D\Phi_R(U)=0$ almost everywhere by skew symmetry.
The convexity of $\Phi_R$ and the Sobolev chain rule yield
$\langle\nu\Delta_DU,D\Phi_R(U)\rangle
=-\nu\int_\Omega\sum_jD^2\Phi_R(U)
\partial_jU\cdot\partial_jU$ for almost every time.
Dominated convergence applies in space--time, with majorant
$|\nabla U|^2$, and at {both} endpoints because
$U(s),U(t)\in H$. It proves \eqref{eq:mixed-energy} for every
$0\le s\le t\le T$. The identity makes $t\mapsto\|U(t)\|_H$
continuous. As $U$ was already weakly continuous in $H$, it is
strongly continuous there, including at $0,T$.

\smallskip\noindent{Step 5. Uniqueness in the entire stated class.}
Let $U,W$ be {arbitrary} energy--distributional solutions
for $C$. Their difference $Z=U-W$ belongs to $C_tL^1\cap L^2_tV$,
and $CZ\in L^1(Q_T)$ by Lemma~\ref{lem:mixed-mult}.
Apply Lemma~\ref{lem:chain} to
$Z_t=\nu\Delta_DZ-CZ$ with radial entropies
$\Phi_\varepsilon(z)=\sqrt{\varepsilon^2+|z|^2}-\varepsilon$.
Since $D^2\Phi_\varepsilon\succeq0$,
$|D\Phi_\varepsilon|\le1$ and
$CZ\cdot D\Phi_\varepsilon(Z)=0$, the entropy does not
increase between any two times. Its endpoint limit as
$\varepsilon\downarrow0$ is the $L^1(\Omega;\mathbb R^m)$ norm,
which proves \eqref{eq:mixed-contraction} and uniqueness from
$Z(0)=0$. Neither argument assumes that $U$ or $W$ is an
approximating limit.

\smallskip\noindent{Step 6. Strong stability under coefficient changes.}
For $C_n\to C$ in the indicated mixed norm and the same $U_0$,
put $Z_n=U_n-U$. Its equation is
\begin{displaymath}
\partial_t Z_n=\nu\Delta_DZ_n-C_nZ_n+(C-C_n)U,
\qquad Z_n(0)=0.
\end{displaymath}
The forced radial estimate and Lemma~\ref{lem:mixed-mult} give
\begin{equation}\label{eq:mixed-L1-stability}
\|Z_n\|_{C([0,T];L^1(\Omega;\mathbb R^m))}
\le K_{\Omega,T,r,q}\|C_n-C\|_{L^r(0,T;L^q(\Omega))}
\|U\|_{\mathcal E}\longrightarrow0.
\end{equation}
The common energy identity and datum bound $U_n$ in $\mathcal E$.
Strong $L^1(Q_T)$ convergence forces every weak $L^2(0,T;V)$
accumulation point to be $U$, and $U_n(T)\rightharpoonup U(T)$ in $H$.
Weak lower semicontinuity supplies
\begin{displaymath}
\begin{aligned}
 \|\nabla U\|_{L^2(Q_T)}^2&\le
 \liminf_n\|\nabla U_n\|_{L^2(Q_T)}^2,\\
 \|U(T)\|_H^2&\le\liminf_n\|U_n(T)\|_H^2.
\end{aligned}
\end{displaymath}
Both energy sums equal $\|U_0\|_H^2/2$; consequently
$\|\nabla U_n\|_{L^2(Q_T)}\to\|\nabla U\|_{L^2(Q_T)}$,
and weak convergence implies $U_n\to U$ strongly in $L^2(0,T;V)$.
Using \eqref{eq:mixed-energy} for each time,
\begin{displaymath}
\sup_{t\in[0,T]}\big|\|U_n(t)\|_H^2-\|U(t)\|_H^2\big|
\le 2\nu\big(\|\nabla U_n\|_{L^2(Q_T)}+
\|\nabla U\|_{L^2(Q_T)}\big)
\|\nabla(U_n-U)\|_{L^2(Q_T)}\to0.
\end{displaymath}
If $U_n$ failed to converge in $C([0,T];H)$, there would be
times $t_n\to t_*$ where $\|U_n(t_n)-U(t_n)\|_H$
stays away from zero. Yet \eqref{eq:mixed-L1-stability} and the
uniform $H$ bound imply
$U_n(t_n)\rightharpoonup U(t_*)$ in $H$; the uniform
convergence of squared norms and continuity of $U:[0,T]\to H$
then force strong convergence in $H$, a contradiction.
Hence $U_n\to U$ in $C([0,T];H)$.

For the nonlinear products, write
\begin{displaymath}
 C_nU_n-CU=(C_n-C)U_n+C(U_n-U).
\end{displaymath}
The first term tends to zero in $L^1(Q_T)$ by the multiplier
estimate and the uniform energy bound; the second tends to zero
by the same estimate with the strong convergence in both factors
of the energy norm. Thus \eqref{eq:mixed-stability} holds.
\end{proof}

\begin{remark}[Why a second proof of contraction is useful]
The proof of Theorem~\ref{thm:mixed} uses the vectorial entropy identity
to control the Euclidean length of differences of solutions. There is
another route that treats contraction and uniqueness directly by an
adjoint calculation: a linear Green identity permits bounded backward
adjoints to act as tests; duality then yields forced $L^1$ contraction.
This alternative still needs a nontrivial step:
the weak equation may have an $L^1(Q_T)$ right-hand side, and the
backward adjoint must therefore be a {bounded} test. We spell out
the bounded-test density and the resulting linear Green identity rather
than invoking the ordinary Hilbert-triple product rule outside its
hypotheses. The argument covers all energy--distributional solutions in
the mixed range, not only limits of bounded-coefficient solutions.
It does {not} establish the general nonlinear chain rule, the exact
energy identity, or the strong coefficient/product convergence; those
conclusions still rest on the preceding proof.
\end{remark}

\begin{lemma}[Uniformly bounded Sobolev test approximation]
\label{lem:bounded-test-approx}
Every $h\in H_0^1(\Omega;\mathbb R^m)\cap
L^\infty(\Omega;\mathbb R^m)$ admits a sequence
$h_j\in C_c^\infty(\Omega;\mathbb R^m)$ such that
$h_j\to h$ strongly in $H_0^1$, $h_j\to h$ almost everywhere
along a subsequence, and
$\sup_j\|h_j\|_{L^\infty}\le K_m\|h\|_{L^\infty}$
with a constant depending only on $m$.
\end{lemma}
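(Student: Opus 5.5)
The plan is to repeat the construction of Step~2 in the proof of Lemma~\ref{lem:chain}, now keeping track of the constant. Fix $h\in V\cap L^\infty(\Omega;\mathbb R^m)$ and put $M=\|h\|_{L^\infty}$. If $M=0$ there is nothing to prove, so assume $M>0$.

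The first step is to pick arbitrary approximants $f_j\in C_c^\infty(\Omega;\mathbb R^m)$ with $f_j\to h$ in $V$. Such a sequence exists by the definition of $H_0^1$. After extraction we may also assume $f_j\to h$ almost everywhere.

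The second step is to apply a scalar cutoff componentwise, scaled to the size of $h$. Fix once and for all $\chi_1\in C_c^\infty(\mathbb R)$ with $0\le\chi_1\le1$ and $\chi_1=1$ on $[-1,1]$, supported in $[-2,2]$. Set $\theta(r)=r\chi_1(r/M)$. Then $\theta(r)=r$ on $[-M,M]$, $|\theta|\le2M$, and $\theta'(r)=\chi_1(r/M)+(r/M)\chi_1'(r/M)$ is bounded by the constant $1+2\|\chi_1'\|_\infty$, which does not depend on $M$. Define $h_j=(\theta(f_{j,1}),\dots,\theta(f_{j,m}))$. Since $\theta$ is smooth and $\theta(0)=0$, each $h_j$ lies in $C_c^\infty(\Omega;\mathbb R^m)$. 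Each component of $h_j$ is bounded by $2M$, so $\|h_j\|_{L^\infty}\le2\sqrt m\,M$, which gives $K_m=2\sqrt m$.

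The third step is the $H_0^1$ convergence. Because every component satisfies $|h_i|\le M$, we have $\theta(h_i)=h_i$, and $\theta'(h_i)=1$ almost everywhere. We then use the same splitting as in Step~2:
\begin{displaymath}
\partial_k\theta(f_{j,i})-\partial_kh_i=\theta'(f_{j,i})\,\partial_k(f_{j,i}-h_i)+[\theta'(f_{j,i})-\theta'(h_i)]\,\partial_kh_i .
\end{displaymath}
The first term tends to zero in $L^2$ because $\theta'$ is bounded. For the second term, along an a.e.\ convergent subsequence it tends to zero pointwise, since $\theta'$ is continuous. It is dominated by $2\|\theta'\|_\infty|\partial_kh_i|$, so dominated convergence gives $L^2$ convergence. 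The subsequence principle then gives convergence of the full sequence. The function values converge in $L^2$ because $\theta$ is Lipschitz and fixes $h$. Almost everywhere convergence follows along the extracted subsequence from continuity of $\theta$.

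The only delicate point is that the a.e.\ set where $\theta'(h_i)=1$ must be compatible with the limit. This holds because $\theta'(h_i(x))=1$ for every $x$ with $|h_i(x)|\le M$, and that set has full measure. No difficulty arises at the boundary of $[-M,M]$ either, since $\theta'$ is continuous and the identity compares with $\theta'(h_i)$ itself. I expect no real obstacle beyond checking that $\|\theta'\|_\infty$ is independent of $M$, which is what makes $K_m$ depend only on $m$.
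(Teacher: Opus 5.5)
Your proof is correct, but it takes a different route from the paper's proof of this lemma. It is exactly the smooth-cutoff construction of Step~2 in the proof of Lemma~\ref{lem:chain}, with the cutoff rescaled to $\theta(r)=r\chi_1(r/M)$ so that the bound $|\theta|\le 2M$ becomes explicit.

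The paper instead clips each component of $g_j$ with the nonsmooth $1$-Lipschitz truncation $T_M$. It then invokes continuity of the superposition $u\mapsto T_M(u)$ in $H_0^1(\Omega)$, and mollifies with radii small enough to stay inside $\Omega$ and to keep the $H_0^1$ errors vanishing.

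What each route buys:
\begin{itemize}
\item The paper's route gives the sharper constant $K_m=\sqrt m$, since nothing overshoots level $M$. The cost is that it rests on composition continuity for a Lipschitz function whose derivative jumps at $\pm M$; that argument needs $\nabla h_i=0$ a.e.\ on $\{|h_i|=M\}$. It also needs the extra mollification step with a diagonal choice of radii.
\item Your smooth $\theta$ makes $\theta(f_{j,i})$ lie in $C_c^\infty(\Omega)$ directly. Continuity of $\theta'$, together with $\theta'=1$ on the closed interval $[-M,M]$, reduces the gradient convergence to an elementary dominated-convergence argument. The price is the factor $2$ in $K_m=2\sqrt m$, which the statement allows.
\end{itemize}

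One small correction to your last sentence. The independence of $\|\theta'\|_\infty$ from $M$ is not what makes $K_m$ depend only on $m$; that comes from $|\theta|\le 2M$. Since $h$, and hence $M$, is fixed during the convergence argument, any finite bound on $\theta'$ would suffice.
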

\begin{proof} $ $\\
Choose $g_j\in C_c^\infty(\Omega;\mathbb R^m)$ converging to
$h$ strongly in $H_0^1$ and, after a subsequence, almost
everywhere. Clip each component of $g_j$ at the fixed level
$M=\|h\|_{L^\infty}$ using the scalar 1-Lipschitz truncation
$T_M$. Standard Sobolev composition continuity gives
$T_M(g_j^{(i)})\to h^{(i)}$ strongly in $H_0^1(\Omega)$  for each $i$;
its zero trace and compact support are preserved, and its
absolute value is at most $M$. Mollify these finitely many
compactly supported $H_0^1(\Omega)$  functions in space with radii small
enough to keep supports inside $\Omega$, choosing radii so that
the additional $H_0^1(\Omega)$  errors tend to zero. Nonnegative
mollification preserves the componentwise $M$ bound, hence
$\|h_j\|_{L^\infty}\le\sqrt m\,M$. The convergent sequence
has an almost-everywhere convergent subsequence, as required.
\end{proof}

\begin{lemma}[Linear Green identity with a sum-space derivative]
\label{lem:linear-green}
Suppose $z\in C([0,T];H)\cap L^2(0,T;V)$ and
$\partial_tz=A+B$ in space--time distributions, where
$A\in L^2(0,T;V')$ and $B\in L^1(Q_T;\mathbb R^m)$.
Suppose also $\psi\in C([0,T];H)\cap L^2(0,T;V)
\cap L^\infty(Q_T;\mathbb R^m)$ and
$\partial_t\psi\in L^2(0,T;V')$.
Then, for every $0\le s\le t\le T$,
\begin{align}\label{eq:linear-green}
 (z(t),\psi(t))_H-(z(s),\psi(s))_H
 &=\int_s^t\langle A,\psi\rangle_{V',V}\,\mathrm d\tau
 +\int_s^t\!\int_\Omega B\cdot\psi\,\mathrm dx\,\mathrm d\tau
 +\int_s^t\langle\partial_t\psi,z\rangle_{V',V}\,\mathrm d\tau.
\end{align}
\end{lemma}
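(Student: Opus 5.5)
We would mimic the proof of Lemma~\ref{lem:chain}, replacing the nonlinear entropy by the bilinear pairing with $\psi$, and regularize by the Dirichlet heat semigroup $S(h)$ in space. Put $z_h=S(h)z$, $A_h=S(h)A$, $B_h=S(h)B$. Steps~1--3 of the proof of Lemma~\ref{lem:chain} apply verbatim and give $z_h\in W^{1,1}(0,T;H)$, with $z_h'=A_h+B_h\in L^1(0,T;H)$ for every $t$. These steps use only the sum-space structure and the bounded-test extension (Lemma~\ref{lem:bounded-test-approx}), not the $C([0,T];L^1)$ hypothesis, which here holds anyway since $z\in C([0,T];H)$. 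The function $\psi$ lies in $W(0,T)=\{\psi\in L^2(0,T;V):\partial_t\psi\in L^2(0,T;V')\}$. Our first task is to prove the product rule
\[
(z_h(t),\psi(t))_H-(z_h(s),\psi(s))_H=\int_s^t(A_h+B_h,\psi)_H\,\mathrm d\tau+\int_s^t\langle\partial_t\psi,z_h\rangle_{V',V}\,\mathrm d\tau .
\]
To obtain it, we mollify $\psi$ in time (or approximate it in $W(0,T)$ by smooth $V$-valued functions as in \cite{LionsMagenes1972Vol1}). For smooth $\psi_k$ the identity is the elementary product rule for $W^{1,1}(0,T;H)\times C^1([0,T];V)$. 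Since $z_h\in C([0,T];H)$, we have $A_h+B_h\in L^1(0,T;H)$, and $\psi_k\to\psi$ in $C([0,T];H)\cap W(0,T)$, so every term passes to the limit. Here $\langle\partial_t\psi_k,z_h\rangle$ converges because $z_h\in L^2(0,T;V)$.

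Next we transpose and remove the smoothing. By the transposition formulas of Step~3 of Lemma~\ref{lem:chain} we have $(A_h,\psi)_H=\langle A,S(h)\psi\rangle$ and $(B_h,\psi)_H=\int_\Omega B\cdot S(h)\psi$. These pairings hold because $\psi(\tau)\in H$, and hence $S(h)\psi(\tau)\in X=V\cap L^\infty$. As $h\downarrow0$ the terms converge as follows:
\begin{itemize}
\item $S(h)\psi\to\psi$ in $L^2(0,T;V)$ by strong continuity on $V$ and dominated convergence in time, so the $A$ term converges.
\item $z_h\to z$ in $L^2(0,T;V)$, so the last term converges.
\item The endpoint values converge because $z_h(\tau)\to z(\tau)$ in $H$ at each fixed $\tau$.
\end{itemize}

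The main obstacle is the $B$ term. There $\|S(h)\psi\|_{L^\infty}\le\|\psi\|_{L^\infty(Q_T)}$ uniformly, by the $L^\infty$ contractivity recorded in Lemma~\ref{lem:heat-smoothing}. Also $S(h)\psi\to\psi$ in $L^2(Q_T)$, hence in measure on $Q_T$. The weighted convergence-in-measure argument of Step~5 of Lemma~\ref{lem:chain}, with integrable weight $|B|$, then gives $\int\!\!\int B\cdot S(h)\psi\to\int\!\!\int B\cdot\psi$. This step is exactly where the boundedness of $\psi$ is indispensable.

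The only other delicate point is the product rule in the first step. Its justification needs care only in checking that the approximation of $\psi$ in $W(0,T)$ converges uniformly in $H$ at the chosen endpoints $s,t$. This follows from the embedding $W(0,T)\hookrightarrow C([0,T];H)$.
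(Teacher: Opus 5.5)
Your argument is correct, but it regularizes differently from the paper.

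\textbf{The paper's route.} The paper mollifies $z$, $A$, $B$ in time on a compact interior interval $[s,t]\Subset(0,T)$. It pairs the resulting spatial identity $\partial_t z_h=A_h+B_h$ with the fixed bounded test $\psi(\tau)$ via Lemma~\ref{lem:bounded-test-approx}. It then applies the Hilbert-triple product rule to the $C^1([s,t];H)$ path $z_h$ and the Lions-class path $\psi$, and lets $h\downarrow0$. Finally it pushes $s\downarrow0$, $t\uparrow T$ using the $C([0,T];H)$ traces and absolute integrability of the right-hand side. Because the smoothing sits on $B$ and $\psi$ never changes, the $B$ term passes simply from $B_h\to B$ in $L^1$ and the bound $\|\psi\|_{L^\infty(Q_T)}$. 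No semigroup property is needed, at the price of the interior-interval and endpoint step.

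\textbf{Your route.} Your Dirichlet heat-semigroup regularization reuses Steps~1--3 of Lemma~\ref{lem:chain}. It works on all of $[0,T]$ at once, since $z_h\in W^{1,1}(0,T;H)$ with the integral representation valid at every time, so no endpoint limit is required. The cost is that transposition moves the smoothing onto the test. The $B$ term then requires the $L^\infty$ contractivity of $S(h)$ for the Euclidean norm (positivity and Gaussian domination of the kernel) together with the weighted convergence-in-measure argument. The $A$ term requires strong continuity and contractivity of $S(h)$ on $V$ plus dominated convergence in time. You also prove the product rule for $W^{1,1}(0,T;H)$ against $W(0,T)$ by density of smooth $V$-valued functions in $W(0,T)$ and the embedding $W(0,T)\hookrightarrow C([0,T];H)$. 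The paper simply invokes this as the ordinary Hilbert-triple identity.

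One small slip: $A_h+B_h\in L^1(0,T;H)$ follows from the smoothing bounds of Step~1, not from $z_h\in C([0,T];H)$ as your sentence suggests.
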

\begin{proof}
First, an equality of spatial distributions of the form
$f=a+b$, with $f\in H$, $a\in V'$ and $b\in L^1(\Omega)$,
extends to every test in $V\cap L^\infty$. Indeed, use
Lemma~\ref{lem:bounded-test-approx}; the $H$ and $V'$ pairings
pass by strong $V$ convergence and the $L^1(Q_T;\mathbb R^m)$  term by dominated
convergence along the bounded almost-everywhere sequence.

Fix $0<s<t<T$ and convolve $z,A,B$ in time on a slightly
larger interval in $(0,T)$ with a common smooth mollifier:
$z_h=\rho_h*_tz$, $A_h=\rho_h*_tA$ and $B_h=\rho_h*_tB$.
Then $z_h\in C^1([s,t];H)\cap L^2(s,t;V)$ and
$\partial_t z_h=A_h+B_h$ in spatial distributions for every
interior time. By the preceding test extension, for almost
every $\tau$ this identity may be paired with
$\psi(\tau)\in V\cap L^\infty(\Omega;\mathbb R^m)$   to obtain
\begin{displaymath}
 (\partial_tz_h,\psi)_H=\langle A_h,\psi\rangle_{V',V}
 +\int_\Omega B_h\cdot\psi\,\mathrm dx.
\end{displaymath}
The ordinary linear Hilbert-triple product identity for the
$C^1([s,t];H)$   path $z_h$ and the Lions-class path $\psi$ is
applicable; in particular $\partial_tz_h\in L^2(s,t;H)$
for each fixed $h$ and $\partial_t\psi\in L^2(s,t;V')$.
Hence
\begin{displaymath}
 (z_h(t),\psi(t))_H-(z_h(s),\psi(s))_H
 =\int_s^t(\partial_tz_h,\psi)_H\,\mathrm d\tau
 +\int_s^t\langle\partial_t\psi,z_h\rangle\,\mathrm d\tau.
\end{displaymath}
Insert the previous spatial identity and let $h\downarrow0$.
The convergences $z_h\to z$ in $C([s,t];H)\cap L^2(s,t;V)$,
$A_h\to A$ in $L^2(s,t;V')$, and $B_h\to B$ in
$L^1((s,t)\times\Omega)$ justify each term; for the $B_h$
pairing use the uniform $L^\infty$ bound of $\psi$.
The left-hand side has the prescribed $C([0,T];H)$  traces.
Finally let interior times approach $s=0$ or $t=T$.
Every integral on the right is absolutely integrable over
$(0,T)$, so the equality holds at all endpoints.
No step assigns $B$ to $V'$.
\end{proof}

\begin{lemma}[Adjoint maximum bound for bounded skew coefficients]
\label{lem:backward-adjoint}
Let $D\in L^\infty(Q_T;\mathbb R^{m\times m})$ be real skew,
let $t\in(0,T]$ and let $\varphi\in H\cap L^\infty(\Omega;\mathbb R^m)$.
There exists a unique backward Lions-class solution
$\psi\in C([0,t];H)\cap L^2(0,t;V)$,
$\partial_\tau\psi\in L^2(0,t;V')$, satisfying
\begin{displaymath}
 -\partial_\tau\psi-\nu\Delta_D\psi-D(\tau)\psi=0,
 \qquad\psi(t)=\varphi,
\end{displaymath}
and $\|\psi\|_{L^\infty((0,t)\times\Omega)}\le\|\varphi\|_\infty$.
\end{lemma}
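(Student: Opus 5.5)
Reverse time and reduce to a forward problem. Set $\tilde\psi(\sigma)=\psi(t-\sigma)$ for $\sigma\in[0,t]$ and $\tilde D(\sigma)=-D(t-\sigma)^\top=D(t-\sigma)$. Skew-symmetry gives $-D^\top=D$, so $-D\psi$ in the backward equation becomes $+\tilde D\tilde\psi$ in forward time. The backward problem is then equivalent to
\begin{displaymath}
\partial_\sigma\tilde\psi-\nu\Delta_D\tilde\psi+\tilde D\tilde\psi=0,\qquad \tilde\psi(0)=\varphi ,
\end{displaymath}
on $(0,t)$. Since $\tilde D\in L^\infty$ is measurable and skew-symmetric, Lemma~\ref{lem:bounded} on the interval $[0,t]$ (its proof never used the particular length $T$) supplies existence and uniqueness in the Lions class $C([0,t];H)\cap L^2(0,t;V)$ with $\partial_\sigma\tilde\psi\in L^2(0,t;V')$. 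Reversing time gives the same class for $\psi$. Uniqueness also follows directly from the Lions--Magenes energy identity applied to the difference of two solutions.

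For the maximum bound I would apply the vector entropy argument to the modulus excess. Put $K=\|\varphi\|_{L^\infty(\Omega;\mathbb R^m)}$, taken as the essential supremum of the Euclidean length. Choose a radial convex entropy $\Phi(\xi)=g(|\xi|)$, where $g\in C^2$, $g=0$ on $[0,K]$, $g$ is convex and increasing with $g'$ bounded, and $g(\rho)>0$ for $\rho>K$. For example, take a $C^2$ regularization of $(\rho-K)_+$ such as $\sqrt{(\rho-K)_+^2+\varepsilon^2}-\varepsilon$ smoothed near $\rho=K$, or simply $h_R((\rho-K)_+)$ with a $C^2$ profile. Then $D\Phi(\xi)=g'(|\xi|)\xi/|\xi|$ vanishes near the origin, is bounded and collinear with $\xi$, and $D^2\Phi\succeq0$. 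Since $\tilde\psi\in C([0,t];H)\subset C([0,t];L^1)$ and $\partial_\sigma\tilde\psi=\nu\Delta_D\tilde\psi-\tilde D\tilde\psi$ with $\tilde D\tilde\psi\in L^2(0,t;H)\subset L^1$, Corollary~\ref{cor:diffusion} applies. The skew term vanishes by collinearity and the diffusion term is nonnegative, so
\begin{displaymath}
\int_\Omega\Phi(\tilde\psi(\sigma))\le\int_\Omega\Phi(\varphi)=0 .
\end{displaymath}
Hence $|\tilde\psi(\sigma,x)|\le K$ a.e. for every $\sigma$, which gives the $L^\infty((0,t)\times\Omega)$ bound after measurability in space--time. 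Alternatively, one can apply Proposition~\ref{prop:product-cone}, since $L^\infty\subset L^\infty(0,T;L^{3/2})$. That gives $|\tilde\psi(\sigma)|\le S(\sigma)|\varphi|\le K$ by $L^\infty$-contractivity of the scalar heat semigroup from Lemma~\ref{lem:heat-smoothing}.

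The main obstacle is constructing an admissible entropy: it must be $C^2$ with bounded Hessian, satisfy $D\Phi(0)=0$, and be exactly zero on the ball of radius $K$ while staying strictly positive outside it. I would take $g(\rho)=\int_0^{\rho}\!\int_0^{s}\chi_\delta(r-K)\,\mathrm dr\,\mathrm ds$, where $\chi_\delta$ is smooth, nonnegative, supported in $(0,\infty)$, bounded, and equal to $1$ on $[\delta,1/\delta]$ and $0$ beyond $2/\delta$ so that $g'$ stays bounded. The Hessian of $g(|\xi|)$ has eigenvalues $g''$ and $g'(\rho)/\rho$. Both are bounded and nonnegative because $g'$ vanishes for $\rho\le K$, which also removes any singularity at the origin. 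Positivity of $g$ on $(K+\delta,\infty)$ then yields $|\tilde\psi|\le K+\delta$, and letting $\delta\downarrow0$ finishes the bound.
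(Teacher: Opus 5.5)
Your proof is correct, but it obtains the maximum bound by a different mechanism from the paper's.

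First, a harmless slip in the time reversal. Reversing time in $-\partial_\tau\psi-\nu\Delta_D\psi-D(\tau)\psi=0$ gives $\partial_\sigma\tilde\psi-\nu\Delta_D\tilde\psi-D(t-\sigma)\tilde\psi=0$. So the forward potential is $-D(t-\sigma)$, not $D(t-\sigma)$. No transpose enters, because the equation is already written in adjoint form. Both fields are bounded and skew, so nothing downstream changes.

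Your existence and uniqueness via Lemma~\ref{lem:bounded} on $[0,t]$ is equivalent to the paper's direct appeal to Lions' theorem.

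For the bound, the paper applies the Arendt--Dier--Ouhabaz convex-invariance criterion to $K_R=\{v\in H:|v|\le R \text{ a.e.}\}$. It identifies the projection as the radial clipping $P_Rv=\min(1,R/|v|)v$, notes that the skew term vanishes by collinearity, and computes $\nabla(P_Rv):\nabla(v-P_Rv)\ge0$. You instead run a Stampacchia-type entropy argument through Corollary~\ref{cor:diffusion}, with a radial convex $\Phi=g(|\xi|)$ that vanishes on a neighbourhood of the ball of radius $K$. Your $g$ does satisfy \eqref{eq:phi}: $\Phi$ is identically zero near the origin, and $g'(\rho)/\rho$ is bounded because $g'$ vanishes near $[0,K]$. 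Letting $\delta\downarrow0$ and applying Fubini then gives the space--time bound.

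The two routes buy different things:
\begin{itemize}
\item Your route uses only results proved in the paper and no external invariance theorem.
\item The paper's route is shorter once the published criterion is accepted. More importantly, it keeps the adjoint proof of contraction (Proposition~\ref{prop:alternative-contraction}) independent of the nonlinear entropy chain rule, which is how the paper presents that proof.
\end{itemize}
If you want to preserve that independence, note that $\tilde D\tilde\psi\in L^2(0,t;H)$ here. Hence $\partial_\sigma\tilde\psi\in L^2(0,t;V')$, and a standard chain rule in the Lions class suffices; the sum-space Lemma~\ref{lem:chain} is not needed.

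Your fallback through Proposition~\ref{prop:product-cone} is closer in spirit to the paper. It uses the product cone and heat domination instead of the ball $K_R$.
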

\begin{proof}
Reverse time $\sigma=t-\tau$. The forward potential is
$-D(t-\sigma)$, again bounded and skew, so Lions' theorem
\cite[Chapter~IV, Theorem~1.1]{Lions1961} gives the unique
Lions-class evolution. With $R=\|\varphi\|_\infty$, the set
$K_R=\{v\in H:|v(x)|\le R\text{ a.e.}\}$ is closed and convex.
Its orthogonal projection is the radial clipping
$P_Rv=\min(1,R/|v|)v$ with $P_R(0)=0$. It is 1-Lipschitz
and fixes zero, hence $P_R(V)\subset V$.
The pointwise skew term
$(-D P_Rv)\cdot(v-P_Rv)$ vanishes. Writing $v=\rho e$
on $\{\rho>R\}$, the Sobolev chain rule yields
\begin{displaymath}
 \nabla(P_Rv):\nabla(v-P_Rv)
 =\frac R\rho\Bigl(1-\frac R\rho\Bigr)
   \sum_j|\partial_jv-e(e\cdot\partial_jv)|^2\ge0;
\end{displaymath}
outside this set the product is zero almost everywhere.
Thus \cite[Theorem~2.2]{ArendtDierOuhabaz2014} preserves $K_R$
throughout the forward evolution, including its endpoint times.
Returning to backward time gives the asserted bound.
\end{proof}

\begin{proposition}[Alternative forced $L^1$ contraction and uniqueness]
\label{prop:alternative-contraction}
Assume the mixed region \eqref{eq:mixed-region} and
$C\in L^r(0,T;L^q(\Omega;\mathbb R^{m\times m}))$ is real skew.
If $Z\in C([0,T];H)\cap L^2(0,T;V)$ satisfies
\begin{displaymath}
 \partial_tZ-\nu\Delta_DZ+CZ=F
 \quad\text{in }\mathcal D'(Q_T;\mathbb R^m),
 \qquad F\in L^1(Q_T;\mathbb R^m),
\end{displaymath}
then, for every $0\le s\le t\le T$,
\begin{equation}\label{eq:alternative-contraction}
 \|Z(t)\|_{L^1(\Omega;\mathbb R^m)}\le
 \|Z(s)\|_{L^1(\Omega;\mathbb R^m)}+
 \int_s^t\!\int_\Omega|F(\tau,x)|\,\mathrm dx\,\mathrm d\tau.
\end{equation}
Consequently, any two energy--distributional solutions for the
same coefficient and initial datum coincide, whether or not they
were obtained by approximation.
\end{proposition}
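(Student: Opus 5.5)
The plan is a duality argument built from Lemmas~\ref{lem:linear-green} and~\ref{lem:backward-adjoint}. It avoids the nonlinear chain rule altogether. Fix $0\le s<t\le T$ and a test $\varphi\in L^\infty(\Omega;\mathbb R^m)$ with $\|\varphi\|_\infty\le1$. The goal is
\begin{displaymath}
(Z(t),\varphi)_H\le\|Z(s)\|_{L^1}+\|F\|_{L^1((s,t)\times\Omega)}+o(1).
\end{displaymath}
Taking the supremum over such $\varphi$ then gives \eqref{eq:alternative-contraction}, by the duality $L^1$--$L^\infty$ with the Euclidean norm, where $\varphi=Z(t)/|Z(t)|$ is truncated on $\{Z(t)\neq0\}$.

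Since $C$ itself is unbounded, I take the truncations $C_k$ of Lemma~\ref{lem:mixed-trunc}. Let $\psi_k$ be the backward solution of Lemma~\ref{lem:backward-adjoint} with $D=C_k$ and terminal value $\psi_k(t)=\varphi$. Then $\|\psi_k\|_{L^\infty}\le1$, $\psi_k$ is of Lions class, and $\partial_\tau\psi_k=-\nu\Delta_D\psi_k-C_k\psi_k\in L^2(0,t;V')$.

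Next I apply Lemma~\ref{lem:linear-green} on $[s,t]$ with $z=Z$, $A=\nu\Delta_DZ$ and $B=F-CZ\in L^1$. The fact that $CZ\in L^1$ comes from Lemma~\ref{lem:mixed-mult}, since $Z\in C([0,T];H)\cap L^2(0,T;V)\subset\mathcal E$. The two diffusion terms cancel by symmetry of the Dirichlet form. What remains is
\begin{displaymath}
\int_s^t\!\!\int_\Omega\bigl(-CZ\cdot\psi_k-C_k\psi_k\cdot Z\bigr).
\end{displaymath}
Skew symmetry gives $C_k\psi_k\cdot Z=-\psi_k\cdot C_kZ$, so this remainder equals $\int(C_k-C)Z\cdot\psi_k$. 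Hence
\begin{displaymath}
(Z(t),\varphi)_H=(Z(s),\psi_k(s))_H+\int_s^t\!\!\int_\Omega F\cdot\psi_k+\int_s^t\!\!\int_\Omega (C_k-C)Z\cdot\psi_k .
\end{displaymath}
The first two terms are bounded by $\|Z(s)\|_{L^1}+\|F\|_{L^1}$ because $|\psi_k|\le1$. The last term is at most $\|(C_k-C)Z\|_{L^1(Q_T)}$, which tends to $0$ as $k\to\infty$ by \eqref{eq:mixed-trunc-op} applied to the fixed $Z\in\mathcal E$. Letting $k\to\infty$ and then taking the supremum over $\varphi$ proves the estimate.

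The main point needing care is the applicability of Lemma~\ref{lem:linear-green}. It requires $\psi_k\in L^\infty(Q_T)$, and the maximum bound of Lemma~\ref{lem:backward-adjoint} supplies exactly that. One must also check that the $V'$ pairings combine: the term $\langle\partial_\tau\psi_k,Z\rangle$ includes $-\int C_k\psi_k\cdot Z$, which is legitimate since $C_k\psi_k\in L^2(Q_T)\subset L^2(0,t;V')$. The endpoint $s=t$ is trivial. Uniqueness then follows by taking $F=0$, $s=0$ and $Z=U-W$ with $Z(0)=0$. Here $Z$ lies in the required class, and $CZ\in L^1$ holds by Lemma~\ref{lem:mixed-mult}.
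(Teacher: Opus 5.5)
Your proposal is correct and follows essentially the same route as the paper. You truncate to bounded skew $C_k$ and use the bounded backward adjoint $\psi_k$ of Lemma~\ref{lem:backward-adjoint} as a test in the linear Green identity of Lemma~\ref{lem:linear-green} with $B=F-CZ$. The diffusion terms cancel, skew symmetry reduces the potential terms to $\int(C_k-C)Z\cdot\psi_k$, and you let $k\to\infty$ via \eqref{eq:mixed-trunc-op} before taking the supremum over $|\varphi|\le1$. Your explicit extremal test $\varphi=Z(t)/|Z(t)|$ on $\{Z(t)\neq0\}$ and your check that $C_k\psi_k\in L^2(Q_T)\subset L^2(0,t;V')$ only make explicit two points the paper leaves implicit.
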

\begin{proof}
Let $C_k$ be the bounded skew truncations of
Lemma~\ref{lem:mixed-trunc}. Their essential property is
\begin{displaymath}
 \sup_{\|W\|_{\mathcal E}\le1}
  \|(C_k-C)W\|_{L^1(Q_T)}\longrightarrow0,
\end{displaymath}
where $\mathcal E$ is the energy space of
Lemma~\ref{lem:mixed-mult}. For finite $r$ the truncations
converge in the mixed norm; at $r=\infty$ the required
operator convergence instead follows from
$C_k\to C$ in $L^2(0,T;L^{6/5}(\Omega;\mathbb R^{m\times m}))$  by the $(2,6/5)$
multiplier estimate. No strong convergence in
$L^\infty(0,T;L^q(\Omega;\mathbb R^{m\times m}))$  is used.

Fix $s<t$ and a terminal $\varphi\in H\cap L^\infty$ with
$|\varphi|\le1$ almost everywhere. Let $\psi_k$ be the
backward adjoint from Lemma~\ref{lem:backward-adjoint} with
$D=C_k$. Apply Lemma~\ref{lem:linear-green} on $[s,t]$
with $z=Z$, $A=\nu\Delta_DZ\in L^2_tV'$, and
$B=-CZ+F\in L^1(Q_T)$, noting that the multiplier lemma
indeed gives $CZ\in L^1(Q_T)$. The Laplacian terms cancel,
and skew symmetry with the {negative} potential in the
backward equation gives exactly
\begin{displaymath}
 (Z(t),\varphi)_H-(Z(s),\psi_k(s))_H
 =\int_s^t\!\int_\Omega((C_k-C)Z)\cdot\psi_k
  +\int_s^t\!\int_\Omega F\cdot\psi_k.
\end{displaymath}
The adjoint is bounded by one independently of $k$ and of
$\varphi$, so
\begin{displaymath}
 (Z(t),\varphi)_H\le \|Z(s)\|_{L^1}
   +\|(C_k-C)Z\|_{L^1((s,t)\times\Omega)}
   +\|F\|_{L^1((s,t)\times\Omega)}.
\end{displaymath}
Let $k\to\infty$. The coefficient error tends to zero uniformly
in the terminal tests by the operator multiplier estimate.
Take the supremum over $|\varphi|\le1$: as $|\Omega|<\infty$,
all these tests lie in $H$, and the supremum equals
$\|Z(t)\|_{L^1}$. The interval $[s,t]$ is arbitrary and
the Green identity includes both endpoints; hence
\eqref{eq:alternative-contraction} follows for all
$0\le s\le t\le T$.
\end{proof}

\begin{remark}[Scope and dependence of the alternative proof]
The transposition argument uses classical bounded-form adjoint
existence, a convex-invariance maximum estimate, and the
{proved} auxiliary linear Green identity, together with the
mixed energy multiplier and its truncations. It is a distinct
proof of contraction and energy-class uniqueness, not an
invocation of a published theorem stating the entire singular
mixed-norm result. It does not by itself prove the nonlinear
vectorial entropy identity, the quadratic equality of energies,
or strong stability in $C([0,T];H)$, $L^2(0,T;V)$  and product $L^1(Q_T;\mathbb R^m)$.
\end{remark}

\begin{remark}[Heat domination at the singular limit]
\label{rem:mixed-domination}
For every solution $U$ of Theorem~\ref{thm:mixed},
$|U(t)|\le S(t)|U_0|$ almost everywhere in $\Omega$ for every
$t\in[0,T]$. Indeed, for the approximating solutions in the proof of
Theorem~\ref{thm:mixed}, Proposition~\ref{prop:product-cone} gives
$|U_k(t)|\le S(t)|U_0|$ almost everywhere for each $t$.
Since $U_k\to U$ in $C([0,T];L^1(\Omega;\mathbb R^m))$, for every
fixed $t$ the positive-part inequality
\begin{displaymath}
\int_\Omega\bigl(|U(t)|-S(t)|U_0|\bigr)_+\,\mathrm dx
\le\|U(t)-U_k(t)\|_{L^1(\Omega;\mathbb R^m)}
\longrightarrow0
\end{displaymath}
proves $|U(t)|\le S(t)|U_0|$ almost everywhere, including at $t=0$.
Uniqueness extends the bound from the constructed limit to every
solution of the stated energy--distributional class. The passage to
singular coefficients uses the approximation established here in
addition to the bounded-form invariance theorem
\cite{ArendtDierOuhabaz2014}.
\end{remark}

\subsection{An alternative derivation from scalar entropies}
\label{subsec:scalar-averaging}

The vectorial chain rule treats a nonlinear function of the whole
solution at once. There is another, geometrically appealing route:
first project a vector solution onto a direction $\omega$, use a
{scalar} entropy identity for that projection, and then average
over all directions. Rotational symmetry turns the average into a
radial entropy; its gradient is parallel to the original vector, so
the skew term again vanishes. The calculation recovers both the
quadratic energy and the Euclidean modulus, rather than just a
componentwise estimate.

We present this as a second derivation of the complete mixed-norm
theorem. Its entropy argument and limiting construction differ from
the main proof, but it uses the previously established
Lemma~\ref{lem:bounded}, Proposition~\ref{prop:product-cone} and
Lemma~\ref{lem:mixed-mult}. The reader should also keep one
distinction in view: the initial bounded-data argument works with
coefficients merely in $L^1(Q_T)$, whereas the passage to arbitrary
$L^2$ data requires the mixed-norm multiplier hypothesis. The
following scalar identity provides the starting point.

\paragraph{The scalar entropy identity and its domain of validity.}
Write $V_0=H_0^1(\Omega)$, and for $\delta>0$ define
\begin{equation}\label{eq:scalar-av-trunc}
 \gamma_\delta(a)=\max\{-1,\min\{1,a/\delta\}\},\qquad
 K_\delta(a)=\int_0^a\gamma_\delta(b)\,\mathrm db.
\end{equation}
For any real-valued $v\in C([0,T];L^1(\Omega))\cap
L^2(0,T;V_0)$ satisfying, in spatial distributions,
$v_t=a+b$, $a\in L^2(0,T;V_0')$, $b\in L^1(Q_T)$, one has
\begin{equation}\label{eq:scalar-av-chain}
 \int_\Omega K_\delta(v(t))-\int_\Omega K_\delta(v(s))
 =\int_s^t\langle a,\gamma_\delta(v)\rangle\,\mathrm d\tau
   +\int_s^t\!\int_\Omega b\,\gamma_\delta(v)
 \quad (0\le s\le t\le T).
\end{equation}
This is the scalar sum-space mechanism of
\cite[Lemma~9, pp.~140--144, and Corollary~11, p.~145]
{AlvarezVazquezJDE2008}. The published result concerns scalar
functions in its own boundary-space setting; the following argument
justifies precisely the Dirichlet--Lipschitz version used here,
without invoking its different domain hypotheses.

Here are the functional details on the Dirichlet--Lipschitz domain.
Fix $[s,t]\Subset(0,T)$, choose a temporal mollifier with support
shorter than the distance to $\{0,T\}$, and put
$v_h=\rho_h*_t v$, $a_h=\rho_h*_t a$ and $b_h=\rho_h*_t b$.
The distributional equation gives, for each interior time and
$\varphi\in C_c^\infty(\Omega)$,
\begin{displaymath}
 ((v_h)'(\tau),\varphi)_{L^2}
   =\langle a_h(\tau),\varphi\rangle
     +\int_\Omega b_h(\tau)\varphi.
\end{displaymath}
Indeed $v_h\in C^\infty([s,t];V_0)$ and
$(v_h)'\in C([s,t];V_0)\subset C([s,t];L^2)$.
For each such time, $\gamma_\delta(v_h)\in V_0\cap L^\infty$
and $|\gamma_\delta(v_h)|\le1$. To extend the displayed equality
to this test, use a sequence in $C_c^\infty(\Omega)$ with the same
uniform bound (up to an inessential constant), converging strongly
in $V_0$ and almost everywhere; truncate first and approximate by
interior cutoffs and smoothing. This uses only the zero Sobolev trace,
not $H^2$ elliptic regularity. The $a_h$ pairing converges by $V_0$
density and the $b_h$ pairing by dominated convergence. Thus
\begin{displaymath}
 ((v_h)',\gamma_\delta(v_h))_{L^2}
 =\langle a_h,\gamma_\delta(v_h)\rangle
    +\int_\Omega b_h\gamma_\delta(v_h).
\end{displaymath}
The left side is the ordinary Hilbert-space chain derivative of
$\int_\Omega K_\delta(v_h)$, since $v_h$ is time smooth in $L^2$
and $K_\delta$ is $C^1(\mathbb R;\mathbb R)$  with bounded Lipschitz derivative.
Integration over $[s,t]$ gives the regularized entropy identity.

As $h\downarrow0$, $v_h\to v$ in $L^2(s,t;V_0)$,
$a_h\to a$ in $L^2(s,t;V_0')$, and $b_h\to b$ in
$L^1((s,t)\times\Omega)$. For the {fixed} piecewise-linear
Lipschitz function $\gamma_\delta$ with $\gamma_\delta(0)=0$,
strong Sobolev composition continuity yields
$\gamma_\delta(v_h)\to\gamma_\delta(v)$ in $L^2(s,t;V_0)$.
The $V_0'$ term converges by duality. For the integrable term,
\begin{displaymath}
 \begin{split}
 \|b_h\gamma_\delta(v_h)-b\gamma_\delta(v)\|_{L^1}
 &\le \|b_h-b\|_{L^1}
 +\int_{(s,t)\times\Omega}|b|
       |\gamma_\delta(v_h)-\gamma_\delta(v)|\longrightarrow0,
 \end{split}
\end{displaymath}
where the last limit follows from convergence in measure,
the uniform bound $2$, and the absolute continuity of the integral
with weight $L^1((s,t)\times\Omega)$. The integrands
$\langle a,\gamma_\delta(v)\rangle$ and
$\int b\gamma_\delta(v)$ are in $L^1(0,T)$.
Finally, $v\in C([0,T];L^1)$ and the $1$-Lipschitz property of
$K_\delta$ make $t\mapsto\int K_\delta(v(t))$ continuous.
Letting interior endpoints tend to $0$ and $T$ and using the
absolute continuity of the right-hand integrals proves
\eqref{eq:scalar-av-chain} for {every} $0\le s\le t\le T$.
The $L^2(0,T;V_0')$  term is paired with a bounded $V_0$ test, the $L^1((s,t)\times\Omega)$  
term only with its $L^\infty((s,t)\times\Omega)$  bound, and time traces are supplied by
$C([0,T];L^1(\Omega))$. In particular, neither $bv\in L^1(Q_T)$  nor
$b\in L^2(0,T;V_0')$  has been assumed.

\begin{proof}[Alternative proof of Theorem~\ref{thm:mixed} by scalar averaging]
We give the entire construction, not just a new proof of
\eqref{eq:mixed-contraction}. Throughout, sphere integration uses the
rotationally invariant probability measure $\sigma$ on
$\mathbb S^{m-1}$; for $m=1$ it is the equally weighted measure on
$\{-1,1\}$. In particular,
\begin{equation}\label{eq:scalar-av-sphere}
 \int_{\mathbb S^{m-1}}\omega\otimes\omega\,\mathrm d\sigma(\omega)
 =\frac1m I_m.
\end{equation}
Consequently, for every $z\in\mathbb R^m$,
\begin{displaymath}
 m\int_{\mathbb S^{m-1}}\frac12(\omega\cdot z)^2
      \mathrm d\sigma=\frac12|z|^2,
 \qquad
 \frac1{a_m}\int_{\mathbb S^{m-1}}|\omega\cdot z|
      \mathrm d\sigma=|z|,
\end{displaymath}
where $a_m$ is defined in \eqref{eq:scalar-av-mod-entropy}.
Both formulas include the convention $m=1$.

\smallskip\noindent{Step 1. Scalar projections of an arbitrary solution.}
Suppose $Z\in C([0,T];H)\cap L^2(0,T;V)$  solves
$Z_t-\nu\Delta_D Z+CZ=F$ in distributions, where $C$ is real
skew-symmetric and $CZ,F\in L^1(Q_T)$. For fixed $\omega$ the
scalar projection $v_\omega=\omega\cdot Z$ satisfies the
hypotheses of \eqref{eq:scalar-av-chain} with
$a=\nu\Delta_D v_\omega$ and
$b=\omega\cdot(F-CZ)$; in particular $v_\omega\in C([0,T];L^1(\Omega))$.
Multiplying \eqref{eq:scalar-av-chain} by any fixed positive constant
and integrating in $\omega$ is justified by
$\nabla Z\in L^2(Q_T)$, $CZ,F\in L^1(Q_T)$ and the uniform
bounds on the scalar truncated gradients. The diffusion pairing
is obtained by the Sobolev chain rule in space.

\smallskip\noindent{Step 2. Exact energy, with no illegal quadratic test.}
Set $j_R(a)=R K_R(a)$ and
\begin{equation}\label{eq:scalar-av-energy-entropy}
 \Phi_R(z)=m\int_{\mathbb S^{m-1}}j_R(\omega\cdot z)
 \,\mathrm d\sigma(\omega),\qquad
 j_R(a)=\begin{cases}a^2/2,&|a|\le R,\\
                   R|a|-R^2/2,&|a|>R.
          \end{cases}
\end{equation}
This $C^1(\mathbb R^m;\mathbb R)$  convex entropy has bounded gradient. Rotational invariance
implies $D\Phi_R(z)=\lambda_R(|z|)z$ with the natural value at
$z=0$. Hence $D\Phi_R(z)\cdot Cz=0$ pointwise for skew $C$,
although $(CZ)\cdot Z$ need not be integrable. At almost every $z$
its Hessian satisfies
\begin{equation}\label{eq:scalar-av-hessian}
 0\le D^2\Phi_R(z)=m\int_{\{|\omega\cdot z|<R\}}
   \omega\otimes\omega\,\mathrm d\sigma(\omega)\le I_m,
 \qquad D^2\Phi_R(z)\longrightarrow I_m\quad(R\to\infty).
\end{equation}
The Hessian is understood almost everywhere in the argument $z$.
At a truncation level, the one-dimensional Sobolev gradient of
$\omega\cdot Z$ vanishes almost everywhere on its level set;
therefore the undefined values of $j_R''$ do not affect the
integrated diffusion expression. This observation also handles
$m=1$, when $\mathbb S^0=\{-1,1\}$. For fixed $z$, as
$R\uparrow\infty$, the sets $\{|\omega\cdot z|<R\}$ increase
to the full sphere, so the nonnegative quadratic diffusion
integrand tends to $|\nabla Z|^2$ and is bounded by it.
We {never} test the singular equation by $Z$: the only
coefficient pairing is
$\int (CZ)\cdot D\Phi_R(Z)$, which is integrable because
$D\Phi_R$ is bounded and $CZ\in L^1$, and is identically zero
by radiality and skewness. Integrating the projected scalar
identities gives, for $F=0$,
\begin{equation}\label{eq:scalar-av-energy-R}
 \int_\Omega\Phi_R(Z(t))
 +\nu\int_s^t\!\int_\Omega\sum_{j=1}^3
   D^2\Phi_R(Z)[\partial_jZ,\partial_jZ]
 =\int_\Omega\Phi_R(Z(s)).
\end{equation}
Indeed, the integrable potential was paired only with the
{bounded} vector $D\Phi_R(Z)$. Since
$0\le\Phi_R(z)\uparrow|z|^2/2$ and the diffusion integrand
is bounded by $|\nabla Z|^2$, monotone and dominated convergence,
respectively, yield \eqref{eq:mixed-energy} for every
$0\le s\le t\le T$. This holds for {every} admissible
solution and hence also for the difference of any two of them.
In particular, the difference satisfies the exact energy identity,
which gives uniqueness and energy-norm dependence on the initial
condition without any approximability hypothesis.

\smallskip\noindent{Step 3. Euclidean $L^1$ contraction with forcing.}
Put
\begin{equation}\label{eq:scalar-av-mod-entropy}
 a_m=\int_{\mathbb S^{m-1}}|\omega\cdot e_1|\,
   \mathrm d\sigma(\omega)>0,\qquad
 \Psi_\delta(z)=\frac1{a_m}\int_{\mathbb S^{m-1}}
      K_\delta(\omega\cdot z)\,\mathrm d\sigma(\omega).
\end{equation}
These entropies are radial and convex;
$0\le |z|-\Psi_\delta(z)\le\delta/(2a_m)$ and
$|D\Psi_\delta(z)|\le1$. For the last bound, radiality
reduces the question to the radial derivative:
$0\le\gamma_\delta(|z|\omega\cdot e)(\omega\cdot e)
\le|\omega\cdot e|$, and its spherical mean is $a_m$.
The potential again disappears; the diffusion term is
nonnegative because $\Psi_\delta$ is convex. The forcing is
bounded in absolute value by $|F|$. Taking $\delta\downarrow0$
yields, for arbitrary $0\le s\le t\le T$,
\begin{equation}\label{eq:scalar-av-forced}
 \|Z(t)\|_{L^1(\Omega;\mathbb R^m)}\le
 \|Z(s)\|_{L^1(\Omega;\mathbb R^m)}
 +\|F\|_{L^1((s,t)\times\Omega;\mathbb R^m)}.
\end{equation}
For $F=0$ this gives \eqref{eq:mixed-contraction} directly,
including every temporal endpoint and all competing energy solutions.

\smallskip\noindent{Step 4. Build solutions for bounded data using
only $C\in L^1(Q_T)$.}
Let $g\in H\cap L^\infty(\Omega;\mathbb R^m)$,
$M=\|g\|_{L^\infty}$, and truncate $C$ pointwise by
$C_k=C\min\{1,k/|C|_{\mathrm F}\}$, with the factor $1$ at
$C=0$. Then $C_k\to C$ in $L^1(Q_T)$ and every $C_k$
is bounded and skew-symmetric. Lemma~\ref{lem:bounded} gives the
solutions $U_k$ with the common datum $g$. The cone computation of
Proposition~\ref{prop:product-cone} yields
\begin{equation}\label{eq:scalar-av-bounded-dom}
 |U_k(t)|\le S(t)|g|\le M\quad\text{a.e. in }\Omega,
 \qquad 0\le t\le T.
\end{equation}
Apply \eqref{eq:scalar-av-forced} to $U_k-U_\ell$ with
coefficient $C_k$ and forcing $(C_\ell-C_k)U_\ell$.
Thus
\begin{equation}\label{eq:scalar-av-cauchy}
 \|U_k-U_\ell\|_{C_tL^1}\le
 M\|C_k-C_\ell\|_{L^1(Q_T)}\longrightarrow0.
\end{equation}
The common bound $|U_k|\le M$ implies
$\|U_k(t)-U_\ell(t)\|_H^2\le
2M\|U_k(t)-U_\ell(t)\|_{L^1}$, so $U_k\to U$
strongly in $C([0,T];H)$. Its spacetime limit satisfies
$|U|\le M$ almost everywhere. The bounded-problem energy
identities give bounded gradients in $L^2(Q_T)$; each weak
$L^2_tV$ cluster point equals $U$ because of the strong
$L^2(Q_T)$ convergence. Thus the {whole} sequence converges
weakly in $L^2_tV$ to $U$, retaining the zero Dirichlet trace.
In the identity
$C_kU_k-CU=(C_k-C)U_k+C(U_k-U)$, the first term has $L^1$
norm at most $M\|C_k-C\|_1$. For the second, convergence in
$C_tH$ implies convergence in measure and
$|C(U_k-U)|\le2M|C|$. Weighted Vitali convergence, using the
integrable dominating weight and uniqueness of limits in measure,
gives convergence in $L^1(Q_T)$ for the {whole} sequence.
Consequently $U$ satisfies the distributional equation with
coefficient $C$, the zero lateral trace, and $U(0)=g$.
The exact energy identity from Step~2 applies to {every}
admissible solution independently of this construction, hence to
this limit. Comparing it at $T$ with the bounded-problem energy
identities gives convergence of the squared $L^2(0,T;V)$  norms;
weak convergence then upgrades to $U_k\to U$ strongly in
$L^2(0,T;V)$. This bounded-data construction needs only
$C\in L^1(Q_T)$; it cannot by itself identify $CU$ for arbitrary
unbounded $L^2(\Omega;\mathbb R^m)$  initial data.

\smallskip\noindent{Step 5. General $H$ data and every mixed endpoint.}
For the stated mixed class, finite measure gives $C\in L^1(Q_T)$.
Take bounded radial truncations $g_j$ of an arbitrary datum
$U_0\in H$, with $g_j\to U_0$ in $H$, and construct the bounded-data
solutions $U^j$ as in Step~4. The identity of Step~2, applied to $Z=U^j-U^k$ with
the {same} coefficient and datum $g_j-g_k$, gives the
{two separate bounds}
\begin{equation}\label{eq:scalar-av-initial-continuity}
 \begin{aligned}
 \|U^j-U^k\|_{C_tH}^{2}
   &\le\|g_j-g_k\|_H^{2},\\
 2\nu\|\nabla(U^j-U^k)\|_{L^2(Q_T)}^{2}
   &\le\|g_j-g_k\|_H^{2}.
 \end{aligned}
\end{equation}
They follow by taking the supremum in the pointwise energy identity,
and separately by evaluating that identity at $t=T$ and dropping
its nonnegative terminal energy. The supremum and total
dissipation must {not} be added with the same right-hand
side: if combined, the valid bound has $2\|g_j-g_k\|_H^2$.
Thus $U^j$ converges strongly in $C([0,T];H)\cap L^2(0,T;V)$.
Lemma~\ref{lem:mixed-mult} gives
$\|C(U^j-U)\|_{L^1}\to0$, so the limit is a solution with
datum $U_0$ and integrable product. Step~2 supplies the exact
energy equality and unrestricted uniqueness; Step~3 supplies
contraction for any two such solutions. Every endpoint of
\eqref{eq:mixed-region} is covered by the multiplier already
proved: in particular $(r,q)=(1,2)$ uses the $L^\infty(0,T;H)$  
factor, $(2,6/5)$ uses $L^2(0,T;L^6(\Omega;\mathbb R^m))$, and
$(\infty,6/5)$ follows through the finite-cylinder
$L^2(0,T;L^{6/5}(\Omega;\mathbb R^{m\times m}))$  embedding. The cases $q=\infty$ and $r=1$
use only the available $L^1(0,T;L^q(\Omega;\mathbb R^{m\times m}))$ -in-time coefficient bound.
{No} convergence of truncations in $L^\infty(0,T;L^q(\Omega;\mathbb R^{m\times m}))$ 
has been assumed. The domination of Step~4 passes to general data
at time zero by $H$ approximation and heat-semigroup contraction;
restarting the equation at any time $s$ and using unrestricted
uniqueness yields $|U(t)|\le S(t-s)|U(s)|$ for $s\le t$.

\smallskip\noindent{Step 6. Strong coefficient and product stability.}
Suppose $C_n\to C$ strongly in the same mixed norm; then
$C_n\to C$ in $L^1(Q_T)$. For a fixed bounded datum $g$, the
corresponding solutions satisfy $|U_n^g|,|U^g|\le\|g\|_\infty$.
The forced estimate \eqref{eq:scalar-av-forced}, now with
coefficient $C_n$ and source $(C_n-C)U^g$, gives
\begin{equation}\label{eq:scalar-av-coeff-L1}
 \|U_n^g-U^g\|_{C_tL^1}\le
 \|g\|_\infty\|C_n-C\|_{L^1(Q_T)}\longrightarrow0.
\end{equation}
By the common $L^\infty(Q_T;\mathbb R^m)$  bound this implies strong convergence in
$C([0,T];H)$. Uniform energy estimates give weak gradient convergence;
the all-time energy identities at $T$ show convergence of gradient
norms and hence strong convergence in $L^2(0,T;V)$.
For an arbitrary datum $U_0\in H$, use the same bounded $g_j$
as in Step~5. Both bounds in \eqref{eq:scalar-av-initial-continuity} are
independent of the coefficient. With the explicitly specified energy norm
\begin{displaymath}
 \|W\|_{\mathcal E_*}:=
 \|W\|_{C([0,T];L^2(\Omega;\mathbb R^m))}
 +\|W\|_{L^2(0,T;H_0^1(\Omega;\mathbb R^m))},
\end{displaymath} they yield a constant $c_\nu$,
independent of $n,j$, such that
\begin{displaymath}
 \|U_n^{U_0}-U^{U_0}\|_{\mathcal E_*}
 \le 2c_\nu\|U_0-g_j\|_H
      +\|U_n^{g_j}-U^{g_j}\|_{\mathcal E_*}.
\end{displaymath}
First fix $j$ and let $n\to\infty$ using the bounded-data
coefficient estimate and energy equality; then let $j\to\infty$.
This proves $U_n\to U$ in $C([0,T];H)\cap L^2(0,T;V)$.
The order is: bounded-data $L^1$ contraction, exact energy,
strong gradient convergence, uniform data approximation, and
finally mixed-multiplier product convergence. In particular, no
$L^\infty(Q_T;\mathbb R^m)$   bound is claimed for solutions with general $H$ data.
For the final product estimate, set
\begin{displaymath}
 \|D\|_{\mathcal C_{r,q}}:=
 \|D\|_{L^r(0,T;L^q(\Omega;\mathbb R^{m\times m}))}
 \qquad(D:Q_T\to\mathbb R^{m\times m}).
\end{displaymath}
Then
\begin{align}\label{eq:scalar-av-products}
 \|C_nU_n-CU\|_{L^1(Q_T;\mathbb R^m)}
 &\le\|(C_n-C)U_n\|_{L^1(Q_T;\mathbb R^m)}
      +\|C(U_n-U)\|_{L^1(Q_T;\mathbb R^m)}\\
 &\le K\|C_n-C\|_{\mathcal C_{r,q}}\|U_n\|_{\mathcal E}
      +K\|C\|_{\mathcal C_{r,q}}\|U_n-U\|_{\mathcal E}
 \longrightarrow0,
\end{align}
by Lemma~\ref{lem:mixed-mult}, the uniform energy bound and the
just-proved strong energy convergence. The $C_n$ here may be
unbounded; the conclusion is not restricted to their bounded
truncations. This completes every assertion of
Theorem~\ref{thm:mixed}.
\end{proof}

\begin{remark}[What the alternative approaches explain]
\label{rem:comparison-methods}
Scalar averaging uses the bounded radial compositions required for
energy and modulus estimates, whereas Lemma~\ref{lem:chain}
treats more general vector compositions, including admissible
nonconvex ones. Remark~\ref{rem:temporal-convolution} changes only
the regularization; Proposition~\ref{prop:product-cone} supplies
heat domination {within} the bounded-form regime and before
singular passage; and adjoint transposition
(Proposition~\ref{prop:alternative-contraction}) gives contraction
and uniqueness, but not by itself energy equality or strong product
stability. The approaches have distinct hypotheses and outputs.
None extends automatically to non-skew potentials, unbounded
geometries or coefficients outside the stated multiplication range.
\end{remark}

\begin{remark}[Relation to the classical system theorem and the scope of the range]
\label{rem:mixed-lsu}
The generalized-solution theorem for parabolic systems of
\cite[Chapter~VII, \S1, Theorem~1.1, pp.~571--574]
{LadyzhenskayaSolonnikovUraltseva1968} is a genuine precursor, not
merely a similarity of exponent notation. Its system (1.1) becomes
$U_t-\nu\Delta U+CU=0$ by taking $a_{ij}=\nu\delta_{ij}$,
$A_i=B_i=f_i=f=0$ and the matrix $A=C$; parabolicity (1.2) and
the homogeneous source condition (1.6) are then satisfied. In
three dimensions, condition (1.5), p.~573, admits
\begin{displaymath}
 C\in L^r(0,T;L^q(\Omega)),\qquad
 \frac1r+\frac{3}{2q}=1,\quad
 \frac32<q\le\infty,\quad1\le r<\infty.
\end{displaymath}
Thus $(r,q)=(1,\infty),(2,3),(4,2)$ fall within the published
hypotheses. On finite cylinders, the same theorem also covers
coefficient spaces embedded into one of these classes, notably
$1/r+3/(2q)\le1$ with $q>3/2$ and $1\le r\le\infty$,
whenever the underlying spatial domain satisfies the hypotheses
of that book. The remaining endpoint
$C\in L^\infty(0,T;L^{3/2}(\Omega;\mathbb R^{m\times m}))$
is also covered by Lions' theory of nonautonomous coercive
forms \cite[Chapter~IV, \S1, Theorem~1.1]{Lions1961}:
the embedding $H_0^1(\Omega)\hookrightarrow L^6(\Omega)$
ensures uniform form boundedness, while skew symmetry
preserves coercivity.

\medskip\noindent{The generalized solution class.}\enspace
The solution-class comparison can be made explicitly on domains satisfying
the hypotheses of both theorems. In Chapter~I,
\S1, p.~6, the book's $V_2$ energy norm controls the essential
$L^\infty(0,T;L^2(\Omega;\mathbb R^m))$  norm and the $L^2(Q_T;\mathbb R^{m\times3})$  norm of spatial gradients.
Chapter~VII, \S1, equation~(1.4), p.~572, defines generalized
solutions through an integral identity for test functions in the
zero-lateral-trace $W^{1,1}_2(Q_T)$ (in LSU's notation, for $\mathbb R^m$-valued functions)  class. Our solutions have the
stronger $C([0,T];H)$ trace and the same spatial energy bounds.
On the displayed coefficient curve, set
$\vartheta=3/(2q)\in[0,1)$.
The energy interpolation inequality gives both $U$ and an admissible
test $\eta$ in
$L^{2/\vartheta}(0,T;L^{2q/(q-1)}(\Omega;\mathbb R^m))$, with the usual
$L^\infty(0,T;L^2(\Omega))$  interpretation when $q=\infty$.
Since $1/r+\vartheta=1$, H\"older's inequality implies  
\begin{displaymath}
 \int_{Q_T}|C|\,|U|\,|\eta|\,\mathrm dx\,\mathrm dt
 \le K\|C\|_{L^r(0,T;L^q(\Omega;\mathbb R^{m\times m}))}\|U\|_{\mathcal E}
       \|\eta\|_{\mathcal E}<\infty.
\end{displaymath}
Here is the necessary test-density step. For each $t\in(0,T]$,
the book's zero-lateral-trace $W^{1,1}_2(Q_t)$ tests have both
$\eta\in L^2(0,t;H_0^1(\Omega;\mathbb R^m))$ and
$\eta_t\in L^2(0,t;L^2(\Omega;\mathbb R^m))$, hence
$\eta\in C([0,t];H)\cap L^2(0,t;V)$.
On a bounded Lipschitz domain, finite sums of smooth temporal functions
times $C_c^\infty(\Omega;\mathbb R^m)$ functions are dense in this intersection
with its $L^2(0,t;V)\cap H^1(0,t;H)$   norm: extend and mollify in time,
then approximate spatially in $H_0^1(\Omega;\mathbb R^m)$ (and therefore in $H$).
This approximation preserves zero lateral trace and converges in
$C([0,t];H)$ at both temporal endpoints. For the diffusion and
$U\eta_t$ terms, use $L^2$  duality; for the zero-order term use
the displayed energy-multiplier bound, which is continuous for the
convergent test sequence; for the initial and terminal terms use
the $C([0,t];H)$  traces of $U$ and $\eta$. The distributional identity
extends first to smooth tests not vanishing at $0$ or $t$ by its
specified initial trace and integration in time, and then by this
density to every such $W^{1,1}_2(Q_t)$ (LSU notation, for vector-valued tests)  test. Therefore, within the
common domain and coefficient assumptions, {every} solution
in our energy--distributional class satisfies LSU's identity (1.4)
and belongs to its generalized $V_2$ class. Its uniqueness estimate
(1.7), p.~573, and the conclusion on p.~574 apply in this overlap.
The assertion is restricted to this overlap; it does not identify
the larger distributional solution classes outside these hypotheses.

\medskip\noindent{Where the hypotheses stop.}\enspace
The LSU integrability assumptions do not directly cover all of
\eqref{eq:mixed-region}: for example, $(r,q)=(1,2)$,
$(2,6/5)$, $(4/3,3/2)$ and $(\infty,6/5)$ are outside its
stated curve and its finite-cylinder embeddings. The space
$L^1(0,T;L^2(\Omega;\mathbb R^{m\times m}))$, is genuinely not included in the LSU coefficient
union: on a ball, $C(t,x)=t^{-3/4}|x|^{-1}{\bf1}_{|x|<1}J$
belongs to $L^1(0,T;L^2(\Omega;\mathbb R^{m\times m}))$, whereas its temporal singularity requires
$r<4/3$ and its spatial singularity requires $q<3$, inconsistent
with $1/r+3/(2q)=1$ for $3/2<q<3$.
LSU does not state the full strong coefficient/product stability
\eqref{eq:mixed-stability} on our region. Its radial truncation and temporal averaging on pp.~575--576
are further methodological antecedents.
The original formulation and coefficient restrictions are given in
\cite[Chapter~VII, \S1, (1.4)--(1.7), pp.~572--574]
{LadyzhenskayaSolonnikovUraltseva1968}; the $V_2$ and
$W^{1,1}_2(Q_T)$ (LSU's test space), conventions are those of Chapter~I, \S1, pp.~5--6.
\end{remark}

\begin{remark}[Uniformly bounded forms and vector heat domination]
\label{rem:classical-form-comparison}
The classical Lions theorem for measurable, uniformly bounded and
coercive non-autonomous forms yields existence and uniqueness in
$L^2(0,T;V)\cap H^1(0,T;V')$ for initial data in $H$ and
right-hand side in $L^2(0,T;V')$; this is a standard form-theoretic
statement, not an attribution of any additional vectorial entropy
conclusion to a particular reference. It applies here when
$C\in L^\infty(0,T;L^q(\Omega))$ with $q\ge3/2$:
$H^1_0(\Omega)\hookrightarrow L^6(\Omega)$ makes the zero-order
form uniformly bounded on $V\times V$, while real skewness gives
$\int_\Omega (CU)\cdot U=0$ and preserves coercivity.
In the homogeneous equation, an arbitrary distributional energy
solution then satisfies $CU\in L^2(0,T;V')$, hence
$U_t\in L^2(0,T;V')$, and enters that classical uniqueness class.
The same derivative-class conclusion with a nonzero forcing term
requires, for example, $F\in L^2(0,T;V')$.
The form theorem alone does not assert Euclidean $L^1$ contraction;
the bounded-form case is covered by the product-cone deduction in
Proposition~\ref{prop:product-cone}. The comparison does not cover the
full mixed region, especially $q<3/2$ or general finite $r$.
For related modern work on convex invariance for non-autonomous
forms, see \cite{ArendtDierOuhabaz2014}; that paper is not cited
here as a direct source of all the preceding conclusions.
For the classical evolution theorem and its dual-space source version,
see \cite[Chapter~IV, \S1, Theorem~1.1 and Remark~1.2]{Lions1961}.
For {autonomous} vector semigroups, the form criteria of
\cite[Theorem~4]{Ouhabaz1999}, and the associated work
\cite{Shigekawa1997}, yield Euclidean-modulus domination once the
forms and domain ideals have been checked. Nonautonomous domination
in $L^\infty(0,T;L^{3/2}(\Omega;\mathbb R^{m\times m}))$, follows from the product-cone application
of \cite[Theorem~2.2]{ArendtDierOuhabaz2014}, verified in
Proposition~\ref{prop:product-cone}.
Neither assertion extends by direct substitution to the endpoint
$C\in L^\infty(0,T;L^{6/5}(\Omega;\mathbb R^{m\times m}))$,: the off-diagonal pairing
$\int (Cu)\cdot v$ need not be finite for arbitrary $u,v\in V$;
Remark~\ref{rem:unbounded-pairing} supplies explicit functions.
For finite $r$ the fixed-in-time form bound may likewise fail.
These are failures of hypotheses of the {cited tools}, not
counterexamples to solvability or to vector $L^1(\Omega;\mathbb R^m)$ contraction.

For comparison, \cite[Sections~2--3, in particular
Propositions~2.7--2.8]{DobrickGlueck2024} discusses contraction
and dissipative matrix couplings under a {bounded}, autonomous
potential and Neumann boundary conditions. Its theorem is not a
direct substitute for the present time-dependent singular setting.
\end{remark}

\begin{remark}[Sharpness of multiplication on the energy space]
\label{rem:mixed-mult-sharpness}
The restriction $2/r+3/q\le 7/2$, together with $q\ge6/5$,
is optimal for the {uniform multiplier estimate on the whole
energy space}, when $m\ge2$; this is not an optimality claim for
well-posedness of the PDE. Indeed, fix a nonzero real skew matrix
$J$ supported in a $2\times2$ block, a ball compactly contained in
$\Omega$, a time $t_0\in(0,T)$, and smooth compactly supported
scalar functions $u,c$ with $cu\not\equiv0$. For small $\varepsilon>0$,
with supports contained in $Q_T$, put
\begin{displaymath}
 U_\varepsilon(t,x)=\varepsilon^{-3/2}
 u\bigl((t-t_0)/\varepsilon^2,(x-x_0)/\varepsilon\bigr)e_1,
 \qquad
 C_\varepsilon(t,x)=\varepsilon^{-2/r-3/q}
 c\bigl((t-t_0)/\varepsilon^2,(x-x_0)/\varepsilon\bigr)J,
\end{displaymath}
where $1/\infty=0$. Direct changes of variables give
\begin{displaymath}
\begin{gathered}
 \sup_\varepsilon\|U_\varepsilon\|_{L^\infty_tL^2_x}<\infty,
 \qquad \sup_\varepsilon\|\nabla U_\varepsilon\|_{L^2(Q_T)}<\infty,\\
 \|C_\varepsilon\|_{L^r_tL^q_x}\asymp1,\qquad
 \|C_\varepsilon U_\varepsilon\|_{L^1(Q_T)}
 \asymp\varepsilon^{7/2-2/r-3/q}.
\end{gathered}
\end{displaymath}
Thus a uniform multiplier estimate fails if $2/r+3/q>7/2$.
The separate spatial restriction is necessary as well: for
$q<6/5$, choose time-independent
$C_\varepsilon(x)=\varepsilon^{-3/q}c((x-x_0)/\varepsilon)J$
and $U_\varepsilon(t,x)=\eta(t)\varepsilon^{-1/2}
 u((x-x_0)/\varepsilon)e_1$, where $\eta$ is a fixed smooth
time cutoff. The energy norms and $L^\infty(0,T;L^q(\Omega;\mathbb R^{m\times m}))$, coefficient
norms are uniformly bounded, while
$\|C_\varepsilon U_\varepsilon\|_{L^1(Q_T)}
\asymp\varepsilon^{5/2-3/q}\to\infty$. 
Here $F_\varepsilon\asymp G_\varepsilon$ means that there are
constants $0<c\le C<\infty$, independent of sufficiently small
$\varepsilon$, such that
$cG_\varepsilon\le F_\varepsilon\le CG_\varepsilon$.
These tests concern arbitrary energy functions, not solutions of
the diffusion equation; hence they do not prove sharpness of the
PDE solvability range. When $m=1$, a real skew $1\times1$
matrix vanishes identically.
\end{remark}

\begin{remark}[The first failed form hypothesis at a singular endpoint]
\label{rem:unbounded-pairing}
To see why Proposition~\ref{prop:product-cone} cannot be invoked for
all the coefficients of Theorem~\ref{thm:mixed}, let a ball around the
origin lie compactly inside $\Omega$, take $m\ge2$, let $J$ rotate
the first two coordinate axes, and put
$C(x)={\bf1}_{|x|<\delta}|x|^{-9/4}J$, where $\delta>0$ is
small enough that the ball is compactly contained in $\Omega$. Then $C\in L^\infty(0,T;L^{6/5}(\Omega;\mathbb R^{m\times m}))$.
With $\eta\in C_c^\infty(\Omega)$ equal to one near the origin, set
$u(x)=\eta(x)|x|^{-2/5}e_1$ and
$v(x)=\eta(x)|x|^{-2/5}e_2$. Both belong to $V$, since
$2/5<1/2$ in dimension three, whereas
\begin{displaymath}
 \int_\Omega(Cu)\cdot v\,\mathrm dx
 \ \ge\ c\int_0^\delta\rho^{2-9/4-4/5}\,\mathrm d\rho
 =c\int_0^\delta\rho^{-21/20}\,\mathrm d\rho=+\infty.
\end{displaymath}
Nevertheless $Cu\in L^1(\Omega;\mathbb R^m)$ and
$(Cu)\cdot u=0$ almost everywhere. Thus the diagonal skew
cancellation does not define a continuous form on $V\times V$.
Even when $q\ge3/2$, an integrable time spike may violate the
uniform-in-time form bound required in the settings of
\cite[Chapter~IV, \S1]{Lions1961} and
\cite[Theorem~2.2]{ArendtDierOuhabaz2014}.
These examples test the {published theorem hypotheses}, not
PDE existence or uniqueness.
\end{remark}

\section{An admissible singular potential beyond selected heat-kernel hypotheses}\label{sec:example}

The exponent conditions in our theorem can appear abstract until one
checks a concrete singularity. Here we choose a coefficient which is
integrable in the required mixed norm, but fails specific spatial
Kato and heat-kernel smallness assumptions used in other theories.
The example explains why the comparison of coefficient classes is
substantive. It does {not} show that the equation is unsolvable
by other methods, or that no other published theory could apply.

\paragraph{A coefficient covered by the present theorem.}
On $\Omega=B(0,2)\subset\mathbb R^3$ let
\begin{displaymath}
q(x)=\mathbf1_{B(0,1)}(x)|x|^{-9/4},\qquad
J=\begin{pmatrix}0&-1\\1&0\end{pmatrix},\qquad C(t,x)=q(x)J.
\end{displaymath}
The matrix is skew-symmetric. The exponent $9/4$ is chosen to
separate the conditions: on the one hand,
\begin{displaymath}
\int_{B(0,1)}|q(x)|^{6/5}\,\mathrm dx
=4\pi\int_0^1r^{-7/10}\,\mathrm dr=\frac{40\pi}{3}<\infty,
\end{displaymath}
so $C\in L^2(0,T;L^{6/5}(\Omega;\mathbb R^{2\times2}))$ for every
finite $T$; on the other hand, the corresponding potential is too
singular for the heat-kernel smallness tests described next.

\paragraph{The spatial Kato condition.}
For a scalar, nonnegative potential $q$ on $\mathbb R^3$, the
spatial Kato class $K_3$ in
\cite[Definition~4, p.~24]{Gulisashvili2002} is characterized by
\begin{displaymath}
\lim_{\rho\downarrow0}\sup_{x\in\mathbb R^3}
 \int_{|x-y|<\rho}\frac{|q(y)|}{|x-y|}\,\mathrm dy=0.
\end{displaymath}
Extend $q$ by zero outside $\Omega$. Evaluating the integral at
$x=0$ gives, for every $0<\rho<1$,
\begin{displaymath}
\int_{|y|<\rho}\frac{|q(y)|}{|y|}\,\mathrm dy
=4\pi\int_0^\rho r^{-5/4}\,\mathrm dr=+\infty.
\end{displaymath}
Thus $q\notin K_3$, although it is in
$L^{6/5}(\Omega;\mathbb R)$. In particular it fails the
$K_3$ premise of the class $B^1_{3,T}$ in
\cite[Definition~6, p.~28]{Gulisashvili2002}:
this latter class requires $V(t,\cdot)\in K_3$ at every time,
together with an integrable time derivative in the norm of the
associated enlarged Kato space $\widehat K_3$, satisfying the
integral identity $V(t)-V(s)=\int_s^t V'(r)\,\mathrm dr$.
The stationary potential here has zero temporal derivative, but
still fails the spatial premise.

\paragraph{The forward and backward nonautonomous tests.}
The definitions of $P_{3,T}$ and $P^*_{3,T}$ used in
\cite[Theorems~1--4, pp.~19--21]{Gulisashvili2002}
require smallness of heat-semigroup multipliers over short time
intervals, respectively with forward and reversed temporal
coefficients. For example, with the free semigroup
$G(s)=e^{s\Delta/2}$ on $L^1(\mathbb R^3;\mathbb R)$, the forward
quantity contains
\begin{displaymath}
\sup_{\delta\le h\le T}\sup_{\|f\|_{L^1(\mathbb R^3)}\le1}
\int_0^\delta\|V(h-s,\cdot)G(s)f\|_{L^1(\mathbb R^3)}\,\mathrm ds;
\end{displaymath}
its limit as $\delta\downarrow0$ must be zero. The backward
condition replaces $V(h-s)$ by $V(h+s)$ in its corresponding
admissible time range. Since $q$ is stationary, either multiplier
reduces to the same one. For
$f_\varepsilon=|B_\varepsilon|^{-1}\mathbf1_{B_\varepsilon}$,
$\|f_\varepsilon\|_{L^1(\mathbb R^3)}=1$. When
$\varepsilon^2\le s\le2\varepsilon^2$, the free Gaussian has a
lower bound $c\varepsilon^{-3}$ for pairs of points in
$B_\varepsilon$. Consequently
$G(s)f_\varepsilon\ge c\varepsilon^{-3}$ on
$\frac12\varepsilon<|x|<\varepsilon$, giving
\begin{displaymath}
\int_0^\delta\|qG(s)f_\varepsilon\|_{L^1(\mathbb R^3)}\,\mathrm ds
\ge c\varepsilon^2\varepsilon^{-9/4}
=c\varepsilon^{-1/4}\longrightarrow\infty
\end{displaymath}
for all fixed $\delta>0$ and small enough $\varepsilon$.
Hence $q\notin P_{3,T}\cup P^*_{3,T}$.

\paragraph{A Dirichlet-semigroup multiplier obstruction.}
A related uniform short-time Miyadera-type estimate on
$L^p(\Omega;\mathbb R)$ would control
\begin{displaymath}
\sup_{\|f\|_{L^p(\Omega)}\le1}
\int_0^\delta\|qS(s)f\|_{L^p(\Omega)}\,\mathrm ds,
\qquad 1\le p<\infty,
\end{displaymath}
where $S(s)=e^{\nu s\Delta_D}$. This formulation is relevant to
semigroup perturbation hypotheses such as those investigated in
\cite{RRSV2000,LVV2006}; the cited papers must be checked in their
own precise propagator and potential classes, not identified with
this single inequality by definition.

For completeness, a lower bound inside the ball can be obtained
without invoking an unexplained first-exit formula. The difference
between the free Gaussian kernel and the killed Dirichlet kernel
solves the homogeneous heat equation in the ball, with zero initial
value away from its pole and boundary data given by the free Gaussian.
The parabolic maximum principle bounds that difference by the
largest boundary datum up to time $s$. For
$x,y\in B_\varepsilon$ and
$\varepsilon^2\le s\le2\varepsilon^2$, this boundary contribution is
at most
\begin{displaymath}
\sup_{0<r\le2\varepsilon^2}(4\pi\nu r)^{-3/2}
\exp\bigl[-(2-\varepsilon)^2/(4\nu r)\bigr]
=o(\varepsilon^{-3}),
\end{displaymath}
whereas the free Gaussian is at least $c\varepsilon^{-3}$.
The resulting interior Dirichlet kernel lower bound gives
\begin{displaymath}
S(s)\bigl(|B_\varepsilon|^{-1/p}\mathbf1_{B_\varepsilon}\bigr)(x)
\ge c_p\varepsilon^{-3/p}
\quad\text{on }\frac12\varepsilon<|x|<\varepsilon.
\end{displaymath}
Multiplication by $q(x)\ge\varepsilon^{-9/4}$ on that annulus
and integration in $s$ yield
\begin{equation}\label{eq:miyadera-witness}
\int_0^\delta
\bigl\|qS(s)(|B_\varepsilon|^{-1/p}\mathbf1_{B_\varepsilon})
\bigr\|_{L^p(\Omega)}\,\mathrm ds
\ge c_p\varepsilon^{-1/4}\longrightarrow\infty.
\end{equation}
This disproves the displayed uniform multiplier estimate for these
standard $L^p$ realizations.

The comparison is deliberately limited. The scalar, full-space
results of \cite{Gulisashvili2002,Gulisashvili2005} and the
abstract perturbation results of \cite{LVV2006,RRSV2000} have
specific hypotheses and solution classes. In fact, identifying
$\mathbb R^2$ with $\mathbb C$ turns the stationary coupling $C=qJ$
into a scalar heat equation with purely imaginary potential $iq$.
For this locally integrable $q$, the corresponding Dirichlet-semigroup
construction and truncation convergence fall within
\cite[Theorem~2.8 and the following Remark~2, p.~301]{LiskevichManavi1997}.
Thus failure of the particular Kato and uniform Miyadera-type tests above
does not exclude earlier singular-potential semigroup constructions.
The cited autonomous scalar-complex result does not itself assert the
nonautonomous matrix-valued theorem or uniqueness among all the
energy--distributional solutions considered here; no broader claim of
noncontainment is intended.

\section{An application to diffusion--precession under integrable fields}
\label{sec:application-precession}

The abstract skew coupling has a simple interpretation in a
three-component diffusion problem. A prescribed magnetic field
causes a magnetization vector to precess: the cross product changes
its direction without directly changing its Euclidean length.
Diffusion dissipates that length, while the precession field may be
rough enough to prevent the ordinary energy test. This is the same
analytic difficulty encountered in the general theory, now in a
recognizable model; the magnetic-precession/diffusion connection
is classical \cite{Torrey1956}, while mathematical spin-transport
systems with additional structure are studied in
\cite{GarciaCerveraWang2007,PuWang2020}.

We consider an idealized Dirichlet diffusion--precession equation
with isotropic diffusion and, optionally, spatially uniform scalar
relaxation. The connection with the Bloch--Torrey equation goes back
to \cite{Torrey1956}. The more involved spin-transport models in
\cite{GarciaCerveraWang2007,PuWang2020}, and the clarification of
boundary-current assumptions in \cite{GarciaCerveraWang2015}, provide
context but are not claimed as instances of the exact problem below.
We do not treat their additional transport, coupling or boundary
features.

Let $\gamma\in\mathbb R$, $a\in L^1(0,T)$ with $a\geq0$,
$A(t)=\int_0^t a(\tau)\,d\tau$, and $B:Q_T\to\mathbb R^3$ be measurable.
We study
\begin{equation}\label{eq:app-spin}
 \partial_tM-\nu\Delta M+a(t)M=\gamma M\times B,
 \qquad M|_{\partial\Omega}=0,\qquad M(0)=f.
\end{equation}
The associated skew matrix is
\begin{equation}\label{eq:app-cross-matrix}
 C_B=\gamma
 \begin{pmatrix}0&-B_3&B_2\\ B_3&0&-B_1\\-B_2&B_1&0\end{pmatrix},
 \qquad C_Bz=\gamma B\times z,
 \quad |C_B|_{\mathrm F}=\sqrt2|\gamma||B|.
\end{equation}
Since $M\times B=-B\times M$, the substitution $U=e^AM$ reduces
\eqref{eq:app-spin} to $U_t-\nu\Delta U+C_BU=0$. In particular,
Theorem~\ref{thm:mixed} applies when $B$ belongs to its admissible mixed
region, and yields arbitrary-$L^2$ data well-posedness, weighted exact
energy, weighted modulus domination, and strong stability of the
precession product.

A useful distinction now emerges. Theorem~\ref{thm:mixed} handles
general $L^2$ data under mixed-norm assumptions. With {bounded}
initial magnetization, the scalar-entropy construction works for any
integrable prescribed field $B\in L^1(Q_T)$, whether or not $B$
belongs to that mixed region. The next proposition makes this
bounded-data extension quantitative: it controls the change of the
solution, its energy and spatially integrated measurements when the
field is perturbed. The entropy tools are inherited from the preceding
theory; the point here is their consequence for a concrete singular
model.

\begin{proposition}[Integrable precession fields and quantitative robustness]
\label{prop:app-L1-precession}
Let $\Omega\subset\mathbb R^3$ be bounded Lipschitz and let
$B\in L^1(Q_T;\mathbb R^3)$ and
$f\in L^2(\Omega;\mathbb R^3)\cap L^\infty(\Omega;\mathbb R^3)$.
Then \eqref{eq:app-spin} has exactly one solution in
$C([0,T];L^2(\Omega;\mathbb R^3))\cap
L^2(0,T;H_0^1(\Omega;\mathbb R^3))$ satisfying the equation in
distributions and $B\times M\in L^1(Q_T;\mathbb R^3)$. This uniqueness holds
among {all} such energy--distributional solutions. For all
$0\le s\le t\le T$,
\begin{align}
 \frac12 e^{2A(t)}\|M(t)\|_{L^2(\Omega;\mathbb R^3)}^2\notag\\
 &\quad+\nu\int_s^t e^{2A(\tau)}
 \|\nabla M(\tau)\|_{L^2(\Omega;\mathbb R^{3\times3})}^2\,d\tau\notag\\
 &=\frac12e^{2A(s)}\|M(s)\|_{L^2(\Omega;\mathbb R^3)}^2,\label{eq:app-energy}\\
 |M(t)|&\le e^{-[A(t)-A(s)]}S_\nu(t-s)|M(s)|
 \quad\text{a.e. in }\Omega.\label{eq:app-dom}
\end{align}
For two fields $B,D\in L^1(Q_T;\mathbb R^3)$, let $M_B,M_D$ have the
same $f,a$, and put $L=\|f\|_{L^\infty(\Omega;\mathbb R^3)}$ and
$\delta=|\gamma|\|B-D\|_{L^1(Q_T;\mathbb R^3)}$. Then
\begin{align}
 \sup_{0\le t\le T}e^{A(t)}\|M_B(t)-M_D(t)\|_{L^1(\Omega;\mathbb R^3)}
 &\le L\delta,\label{eq:app-l1-rate}\\
 \|W(t)\|_{L^2(\Omega;\mathbb R^3)}^2
 +2\nu\int_0^t\|\nabla W(\tau)\|_{L^2(\Omega;\mathbb R^{3\times3})}^2\,d\tau
 &\le4L^2\delta,\qquad W=e^A(M_B-M_D).
 \label{eq:app-energy-rate}
\end{align}
Consequently $B_n\to B$ in $L^1(Q_T)$ implies convergence of
$M_{B_n}\to M_B$ in $C([0,T];L^2(\Omega;\mathbb R^3))\cap L^2(0,T;H_0^1(\Omega;\mathbb R^3))$  and
$B_n\times M_{B_n}\to B\times M_B$ strongly in $L^1(Q_T)$.
\end{proposition}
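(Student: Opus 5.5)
The plan is to reduce \eqref{eq:app-spin} to the pure skew problem through $U=e^{A}M$, and then reuse the bounded-data part of the scalar-averaging proof of Theorem~\ref{thm:mixed}. Since $a\in L^1(0,T)$ and $a\ge0$, the weight $e^{A}$ is absolutely continuous and bounded between $1$ and $e^{\|a\|_{L^1}}$. Hence $M$ is an energy--distributional solution of \eqref{eq:app-spin} exactly when $U=e^AM$ is one for $U_t-\nu\Delta U+C_BU=0$ with $U(0)=f$; multiplying the distributional identity by $e^{A}$ is legitimate because tests remain admissible under multiplication by an absolutely continuous function of time. We also have $C_BU=\gamma e^{A}B\times M$, so the two integrability requirements $C_BU\in L^1$ and $B\times M\in L^1$ are equivalent. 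Moreover $|C_B|_{\mathrm F}=\sqrt2|\gamma||B|\in L^1(Q_T)$.

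\emph{Existence} is Step~4 of the scalar-averaging proof, which needs only $C\in L^1(Q_T)$ and $f\in H\cap L^\infty$. It produces $U$ with $|U|\le S(t)|f|\le L$ and with $C_BU\in L^1$.

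\emph{Energy and uniqueness} follow from Step~2 of that proof, which applies to every admissible solution of the skew equation. The one requirement is $C_BU\in L^1$, which is part of the solution class. Steps~2--3 (or Lemma~\ref{lem:chain} with $\Phi_R$ and $\Phi_\varepsilon$) therefore give exact energy equality and $L^1$ contraction for differences, hence uniqueness. Rewriting the energy identity for $U$ in terms of $M$ gives \eqref{eq:app-energy}.

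\emph{Domination.} We first obtain $|U(t)|\le S(t-s)|U(s)|$. For $s=0$ this is immediate from Step~4. For general $s$ we restart at time $s$ with datum $U(s)$, which is bounded since $|U(s)|\le L$, and invoke uniqueness on $[s,T]$. Dividing by $e^{A(t)}$ then yields \eqref{eq:app-dom}.

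For the \emph{rates}, let $U_B=e^AM_B$ and $U_D=e^AM_D$, and set $W=U_B-U_D$. Then
\[
W_t-\nu\Delta W+C_BW=(C_D-C_B)U_D,
\]
and this source lies in $L^1$ because $|U_D|\le L$. Its pointwise size is $|(C_D-C_B)U_D|=|\gamma||(D-B)\times U_D|\le|\gamma||B-D|\,L$. The forced contraction \eqref{eq:scalar-av-forced} then gives \eqref{eq:app-l1-rate}. For \eqref{eq:app-energy-rate} I will apply Lemma~\ref{lem:chain} with the radial $\Phi_R$ and let $R\to\infty$. The skew term vanishes. The forcing pairing is $\int (C_D-C_B)U_D\cdot D\Phi_R(W)$, and since $|D\Phi_R(W)|\le|W|\le 2L$ uniformly in $R$, it is bounded by $2L\cdot L\delta$. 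After multiplying by $2$, this gives the bound $4L^2\delta$.

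This rate step is the main obstacle. Testing by $W$ itself is not allowed, so the bounded entropy must carry the factor $|W|\le 2L$, and monotone convergence has to be used on the left-hand side.

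Finally, for \emph{convergence} as $B_n\to B$ in $L^1$: \eqref{eq:app-energy-rate} gives convergence in $C_tL^2\cap L^2_tH_0^1$, after dividing by the bounded weight. For the products,
\[
B_n\times M_{B_n}-B\times M_B=(B_n-B)\times M_{B_n}+B\times(M_{B_n}-M_B).
\]
The first term is at most $L|B_n-B|$, which tends to zero in $L^1$. The second is dominated by $2L|B|$ and converges in measure, so Vitali's theorem (applied to subsequences and then the whole sequence) gives $L^1$ convergence.
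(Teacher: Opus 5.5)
Your proposal is correct and follows the paper's proof essentially step by step: the reduction $U=e^{A}M$, the bounded-data construction of Step~4 of the scalar-averaging proof with cone domination $|U|\le L$, exact energy and uniqueness from radial entropies applied to arbitrary members of the class, restart plus uniqueness for \eqref{eq:app-dom}, the forced $L^1$ contraction with source $(C_D-C_B)U_D$ for \eqref{eq:app-l1-rate}, and a bounded radial entropy exploiting $|W|\le 2L$ for \eqref{eq:app-energy-rate}. The differences are cosmetic. The paper fixes $R>2L$, so that $\Phi_R(W)=|W|^2/2$ exactly, instead of letting $R\to\infty$, and you read off the convergence of $M_{B_n}$ directly from \eqref{eq:app-energy-rate}. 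Also, for $\gamma=0$ your claimed equivalence of $C_BU\in L^1$ and $B\times M\in L^1$ fails in one direction, but this is harmless because $|M|\le L$ already gives $B\times M\in L^1$ for the constructed solution.
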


\begin{proof}
The bounded-coefficient solutions obtained by truncating $C_B$ obey
$|U_k(t)|\le S_\nu(t)|f|\le L$ by
Proposition~\ref{prop:product-cone}. The forced Euclidean $L^1(\Omega;\mathbb R^3)$
estimate established in the alternative scalar-entropy proof above
gives 
$\|U_k-U_\ell\|_{C([0,T];L^1(\Omega;\mathbb R^3))}\le
L\|C_k-C_\ell\|_{L^1(Q_T;\mathbb R^{3\times3})}$.
Together with the common pointwise bound it implies strong $C([0,T];L^2(\Omega;\mathbb R^3))$
convergence. The gradients are uniformly bounded, and the coefficient
products converge in $L^1(Q_T;\mathbb R^3)$ by the decomposition
$C_kU_k-C_BU=(C_k-C_B)U_k+C_B(U_k-U)$ and weighted Vitali convergence.
The scalar radial entropy identity applies to this limit and to every
competing energy--distributional solution with integrable product;
it yields the exact energy identity and unrestricted uniqueness.
Equality of endpoint energies upgrades weak to strong convergence of
gradients, giving existence and \eqref{eq:app-energy}.
The cone domination passes to the limit; time restart and uniqueness
give \eqref{eq:app-dom}.

For two fields, $U_B-U_D$ solves a forced equation with coefficient
$C_B$ and force $(C_D-C_B)U_D$. The latter has norm at most
$|\gamma|L|B-D|$, proving \eqref{eq:app-l1-rate}.
Since $|W|=|U_B-U_D|\le2L$, choose $R>2L$ and use exactly
the bounded-gradient radial entropy $\Phi_R$ from
\eqref{eq:scalar-av-energy-entropy}. Throughout the range of $W$,
$\Phi_R(W)=|W|^2/2$, $D\Phi_R(W)=W$, and
$D^2\Phi_R(W)=I_3$ almost everywhere. This is a legitimate
test for an equation with only $L^1$ forcing: the scalar temporal
chain identity \eqref{eq:scalar-av-chain} applies because
$D\Phi_R$ is globally bounded, $C_BW,F\in L^1(Q_T)$, and
$|F\cdot W|\le2L|F|$. The skew term vanishes pointwise.
For every $t\in[0,T]$ the resulting exact identity is
\begin{displaymath}
 \frac12\|W(t)\|_{L^2(\Omega;\mathbb R^3)}^2
 +\nu\int_0^t\|\nabla W(\tau)\|_{L^2(\Omega;\mathbb R^{3\times3})}^2\,\mathrm d\tau
 =\int_0^t\!\!\int_\Omega (C_D-C_B)U_D\cdot W
 \le 2L^2\delta,
\end{displaymath}
which is \eqref{eq:app-energy-rate}. The bounds for the
$C_tL^2$ norm and total gradient dissipation follow {separately};
their sum cannot be assigned the same right-hand side as the
pointwise energy inequality. Finally, the strong energy
convergence and common uniform bound imply
$\|(B_n-B)\times U_{B_n}\|_1\le L\|B_n-B\|_1$ and
$B\times(U_{B_n}-U_B)\to0$ in $L^1$ by weighted Vitali.
Multiplying by $e^{-A}$ completes the proof.
\end{proof}

For an observable with bounded weight $\psi\in L^\infty(\Omega;\mathbb R^3)$,
\eqref{eq:app-l1-rate} yields
\begin{equation}\label{eq:app-observable}
 \sup_t e^{A(t)}\left|\int_\Omega\psi\cdot(M_B-M_D)(t)\,dx\right|
 \le\|\psi\|_{L^\infty(\Omega;\mathbb R^3)} L\delta.
\end{equation}
For the integrated transverse signal
$\mathcal S_B(t)=\int_\Omega(M_{B,1}+iM_{B,2})(t,x)\,dx$,
the same bound holds without an additional factor.
If instead both fields lie in the same admissible mixed class and
$f\in L^p(\Omega;\mathbb R^3)$, $2\le p<\infty$, radial truncation of $f$ and contraction
for the same coefficient yield the quantitative extension 
\begin{equation}\label{eq:app-general-signal}
 \sup_{t\in[0,T]} e^{A(t)}
 \|M_B(t)-M_D(t)\|_{L^1(\Omega;\mathbb R^3)}
 \le3\|f\|_{L^p(\Omega;\mathbb R^3)}\delta^{1-1/p}.
\end{equation}
Indeed, at truncation level $R$ the left side is bounded by 
$R\delta+2\|f\|_{L^p(\Omega;\mathbb R^3)}^pR^{1-p}$; choose 
$R=\|f\|_{L^p(\Omega;\mathbb R^3)}\delta^{-1/p}$ when $\delta>0$ and   $\|f\|_{L^p(\Omega;\mathbb R^3)}>0$; the zero-data and zero-perturbation cases follow by uniqueness. The mixed-norm assumption
is essential here to construct solutions for general unbounded data.

\begin{remark}[A solenoidal singular field outside the mixed range]
\label{ex:app-solenoidal}
Let $\Omega=B(0,2)$, take a smooth radial cut-off $\chi$ supported
in $[0,1)$ and equal to one near zero, and set $e_3=(0,0,1)$.
For $\alpha=11/4$ and $\beta=3/4$, define
\begin{equation}\label{eq:app-Bsing}
 B(t,x)=t^{-\beta}\chi(|x|)
 |x|^{-(\alpha+1)}(e_3\times x).
\end{equation}
Because $|B|=t^{-\beta}\chi(r)r^{-\alpha}\sin\theta$ in spherical
coordinates, the integrability tests are
\begin{displaymath}
 \int_0^T t^{-\beta s}\,\mathrm dt<\infty
 \iff \beta s<1,\qquad
 \int_0^\rho r^{2-\alpha q}\,\mathrm dr<\infty
 \iff \alpha q<3.
\end{displaymath}
Thus $B\in L^1(Q_T)$, but $B\notin L^r(0,T;L^q(\Omega;\mathbb R^3))$  for any
$q\ge6/5$, since $\alpha(6/5)=33/10>3$. The vector field is
spatially divergence free in distributions: it is tangential to all
spheres centered at the origin, its classical divergence vanishes
off the origin, and the flux across every shrinking sphere is zero.
For $x=h e_1$ and $y=h e_2$ near zero, the precession axes are
proportional to $e_2$ and $-e_1$; hence the corresponding skew
matrices do not commute whenever $\gamma\ne0$, as follows. For $a\in\mathbb R^3$, write $[a]_\times z=a\times z$
for the associated skew-symmetric matrix, and use
$[A,B]=AB-BA$ for the matrix commutator. The vector triple-product
identity gives
\begin{displaymath}
 [[a]_\times,[b]_\times]z=(a\times b)\times z,
 \qquad [[a]_\times,[b]_\times]=[a\times b]_\times.
\end{displaymath}
The two axes above are not parallel, so their commutator is nonzero.
Thus Proposition~\ref{prop:app-L1-precession} applies to bounded
initial magnetization even though Theorem~\ref{thm:mixed} cannot be
invoked directly for this coefficient.

Consider smooth, bounded, solenoidal approximations
\begin{displaymath}
 B_\varepsilon(t,x)=(t+\varepsilon)^{-\beta}\chi(|x|)
 (|x|^2+\varepsilon^2)^{-(\alpha+1)/2}(e_3\times x).
\end{displaymath}
For the temporal part,
\begin{displaymath}
 \int_0^T\bigl(t^{-\beta}-(t+\varepsilon)^{-\beta}\bigr)
 \,\mathrm dt\le\frac{\varepsilon^{1-\beta}}{1-\beta}.
\end{displaymath}
For the spatial regularization, splitting at $r=\varepsilon$ gives
\begin{displaymath}
 C\int_0^\varepsilon
 (r^{2-\alpha}+\varepsilon^{-\alpha-1}r^3)
 \,\mathrm dr\le C\varepsilon^{3-\alpha},
\end{displaymath}
while the mean-value estimate for $r>\varepsilon$ gives
\begin{displaymath}
 C\varepsilon^2\int_\varepsilon^1 r^{-\alpha}\,\mathrm dr
 \le C\varepsilon^{3-\alpha}\quad (\alpha>1).
\end{displaymath}
It follows that
\begin{displaymath}
 \|B_\varepsilon-B\|_{L^1(Q_T;\mathbb R^3)}
 \le K\big(\varepsilon^{1-\beta}
              +\varepsilon^{3-\alpha}\big)=O(\varepsilon^{1/4}).
\end{displaymath}
Consequently \eqref{eq:app-l1-rate}, \eqref{eq:app-energy-rate},
and \eqref{eq:app-observable} yield $O(\varepsilon^{1/4})$ for the
weighted spatial $L^1$ solution and signal errors and
$O(\varepsilon^{1/8})$ for the weighted strong energy error.
The precession products converge in $L^1(Q_T)$ without a claimed
universal algebraic rate. This is an idealized singular-field test,
not a physically calibrated magnetic-field configuration.
\end{remark}

\begin{remark}[Limits of the application]
The qualitative mechanism requires a skew pointwise coupling; the
quantitative estimates additionally exploit uniform boundedness of
the initial magnetization. Spatially dependent damping, forced
longitudinal relaxation, Neumann/Robin boundaries, anisotropic
diffusion and self-consistent magnetization dynamics require further
results and are not consequences of the proposition. The application
illustrates the scope of the methods but does not establish the
historical priority of its $L^1(Q_T;\mathbb R^{3\times3})$-coefficient formulation.
\end{remark}

\section{Conclusion}\label{sec:conclusion}

The obstacle studied here is a small-looking but consequential gap
between algebra and analysis. Skew-symmetry cancels a zero-order
coupling pointwise, yet a weak solution with $CU\in L^1(Q_T)$ cannot
necessarily be inserted as a test in its own equation. By establishing
an entropy chain rule compatible with this limited integrability and
by constructing the necessary time-continuous solutions, we turn the
formal cancellation into an exact energy identity for every pair of
times.

This reasoning gives more than an energy estimate. Throughout the
mixed range $1\le r\le\infty$, $6/5\le q\le\infty$ and
$2/r+3/q\le7/2$, it yields uniqueness among all solutions in the
specified energy--distributional class, contraction in the spatial
$L^1(\Omega;\mathbb R^m)$ norm of the Euclidean vector modulus, and strong stability both
of the solution and of the coefficient--solution product. The last
property is particularly relevant when singular potentials are
approximated rather than handled directly.

The two entropy derivations emphasize complementary ideas: one works
with vector compositions and a conditional chain rule, while the
other reconstructs radial entropies by averaging scalar identities
over directions. Temporal convolution, convex invariance and adjoint
transposition explain other parts of the picture under their own
hypotheses. We have kept these distinctions visible because the
techniques may be useful in different low-regularity problems, not
because each of them independently proves every conclusion.

The classical system and form theories discussed above already cover
important subranges and properties. Our multiplier example identifies
a limit of the uniform energy-space multiplication estimate, {not}
a sharp solvability threshold for the PDE. The singular-potential
example compares precise coefficient hypotheses. Finally, the
Dirichlet diffusion--precession application shows how the bounded-data
entropy argument also controls integrable precession fields outside
the main mixed region, with explicit stability estimates for solutions
and integrated observables. It does not extend arbitrary-$L^2(\Omega;\mathbb R^3)$ data
well-posedness to general $L^1(Q_T;\mathbb R^{3})$ fields.

\section*{Statements and Declarations}

\paragraph{Funding.}
The author was supported by the Xunta de Galicia through the project
``Consolidaci\'on e Estruturaci\'on 2023 GRC GI-1561---Ecuaci\'ons diferenciais non lineais (EDNL).''

\paragraph{Competing interests.}
The author has no relevant financial or non-financial interests to disclose.

\paragraph{Data availability.}
No datasets were generated or analysed during the current study.

\paragraph{Use of generative AI}
During manuscript preparation, the author used OpenAI ChatGPT as an AI-assisted tool for bibliographic organization, and language editing. The author takes full responsibility for the manuscript.

\section*{Author information}
Francisco Javier Fern\'andez Fern\'andez, Department of Statistics, Mathematical
Analysis and Optimisation, Faculty of Mathematics, Universidade de Santiago de
Compostela, R\'ua Lope G\'omez de Marzoa, s/n, Campus Vida, 15782 Santiago de
Compostela, A Coru\~na, Spain.  E-mail: \texttt{fjavier.fernandez@usc.es}.

\end{document}